\title{Counterexamples to Hochschild--Kostant--Rosenberg in characteristic $p$}
\author{Benjamin Antieau, Bhargav Bhatt, and Akhil Mathew}
\date{\today}
\newcommand{\Fil}{\mathrm{Fil}}
\newcommand{\ShK}{\mathbf{K}^{\et}(-)_{\mathcal{A}}}
\newcommand{\ShHH}{\mathbf{HH}(-/k)_{\mathcal{A}}}
\definecolor{todo}{rgb}{1,0,0}
\definecolor{conditional}{rgb}{0,1,0}
\definecolor{e-mail}{rgb}{0,.40,.80}
\definecolor{reference}{rgb}{.20,.60,.22}
\definecolor{mrnumber}{rgb}{.80,.40,0}
\definecolor{citation}{rgb}{0,.40,.80}
\DeclareSymbolFontAlphabet{\mathbb}{AMSb} 
\DeclareSymbolFontAlphabet{\mathbbl}{bbold}
\let\oldmarginpar\marginpar
\renewcommand\marginpar[1]{\-\oldmarginpar[\raggedleft\footnotesize #1]%
{\raggedright\footnotesize #1}}
\newcommand{\Ascr}{\mathcal{A}}
\newcommand{\Cscr}{\mathcal{C}}
\newcommand{\Oscr}{\mathcal{O}}
\newcommand{\Xscr}{\mathcal{X}}
\newcommand{\Yscr}{\mathcal{Y}}
\newcommand{\B}{{B}}
\renewcommand{\H}{{H}}
\newcommand{\K}{\mathrm{K}}
\newcommand{\FF}{\mathds{F}}
\newcommand{\NN}{\mathds{N}}
\newcommand{\PP}{\mathds{P}}
\newcommand{\ZZ}{\mathds{Z}}
\newcommand{\gr}{\mathrm{gr}}
\newcommand{\crys}{\mathrm{crys}}
\newcommand{\dR}{\mathrm{dR}}
\newcommand{\Sp}{\mathrm{Sp}}
\newcommand{\HKR}{\mathrm{HKR}}
\DeclareMathOperator{\dlog}{dlog}
\renewcommand{\geq}{\geqslant}
\renewcommand{\leq}{\leqslant}
\newcommand{\HC}{\mathrm{HC}}
\newcommand{\THH}{\mathrm{THH}}
\newcommand{\HP}{\mathrm{HP}}
\newcommand{\TP}{\mathrm{TP}}
\newcommand{\HH}{\mathrm{HH}}
\DeclareMathOperator{\Br}{Br}
\newcommand{\Perf}{\mathrm{Perf}}
\newcommand{\PGL}{\mathbf{PGL}}
\newcommand{\GL}{\mathbf{GL}}
\newcommand{\Gm}{\mathds{G}_{m}}
\newcommand{\et}{\mathrm{\acute{e}t}}
\newcommand{\fppf}{\mathrm{fppf}}
\newcommand{\per}{\mathrm{per}}
\newcommand{\ind}{\mathrm{ind}}
\DeclareMathOperator{\Spec}{Spec}
\newcommand{\we}{\simeq}
\newcommand{\iso}{\cong}
\newcommand{\cosimp}[3]{\xymatrix@1{#1 \ar@<.4ex>[r] \ar@<-.4ex>[r] & {\ }#2 \ar@<0.8ex>[r] \ar[r] \ar@<-.8ex>[r] & {\ } #3 \ar@<1.2ex>[r] \ar@<.4ex>[r] \ar@<-.4ex>[r] \ar@<-1.2ex>[r] & \cdots }}
\theoremstyle{plain}
\newtheorem{theorem}{Theorem}[section]
\newtheorem*{theorem*}{Theorem}
\newtheorem{lemma}[theorem]{Lemma}
\newtheorem{proposition}[theorem]{Proposition}
\newtheorem{corollary}[theorem]{Corollary}
\newtheorem*{corollary*}{Corollary}
\theoremstyle{plain}
\theoremstyle{definition}
\newtheoremstyle{named}{}{}{\itshape}{}{\bfseries}{.}{.5em}{#1 \thmnote{#3}}
\theoremstyle{named}
\theoremstyle{definition}
\newtheorem{definition}[theorem]{Definition}
\newtheorem{warning}[theorem]{Warning}
\newtheorem{notation}[theorem]{Notation}
\newtheorem{observation}[theorem]{Observation}
\newtheorem{example}[theorem]{Example}
\newtheorem*{example*}{Example}
\newtheorem*{question*}{Question}
\newtheorem{construction}[theorem]{Construction}
\newtheorem{calculation}[theorem]{Calculation}
\newtheorem{remark}[theorem]{Remark}
\renewcommand{\bigwedge}{\wedge}
\renewcommand{\mathbb}[1]{\mathbf{#1}}
\renewcommand{\mathds}[1]{\mathbf{#1}}
\renewcommand{\mathscr}[1]{\mathcal{#1}}
\begin{document}

\maketitle

\begin{abstract}
    \noindent
    We give counterexamples to the degeneration of the HKR spectral sequence in
    characteristic $p$, both in the untwisted and twisted settings. We also
    prove that the de Rham--$\HP$ and crystalline--$\TP$ spectral sequences
    need not degenerate. 

    \paragraph{Key Words.} Hochschild homology, algebraic de Rham cohomology,
    Hodge cohomology, classifying stacks.

    \paragraph{Mathematics Subject Classification 2010.}
    \href{http://www.ams.org/mathscinet/msc/msc2010.html?t=13Dxx&btn=Current}{13D03},
    \href{http://www.ams.org/mathscinet/msc/msc2010.html?t=14Fxx&btn=Current}{14F40},
    \href{http://www.ams.org/mathscinet/msc/msc2010.html?t=16Exx&btn=Current}{16E40},
    \href{http://www.ams.org/mathscinet/msc/msc2010.html?t=19Dxx&btn=Current}{19D55}.
\end{abstract}


\section{Introduction}

Let $k$ be a commutative  ring and $R$ a commutative flat $k$-algebra. Recall
that the Hochschild homology complex $\HH(R/k)$ of $R$ relative to $k$ can be defined as the ``functions on the self-intersection of the diagonal of $\mathrm{Spec}(R) \to \mathrm{Spec}(k)$'', i.e., as 
\[ \HH(R/k) = R \otimes^L_{R \otimes_k R} R.\]
The Hochschild--Kostant--Rosenberg (HKR) theorem connects the cohomology groups
of this complex to differential forms over $k$: if $R$ is smooth, there are canonical isomorphisms 
\begin{equation} 
\label{HKRthm} 
 H^{-n}(\HH(R/k)) \iso
\Omega^n_{R/k}. 
\end{equation}

In characteristic zero, the formula \eqref{HKRthm} upgrades to a canonical decomposition
(often referred to as the Hodge decomposition) of the Hochschild complex, cf.
for instance
\cite[Th. 8.6]{Qui70}, \cite{GS87}, \cite{Wei97};
this implies that for 
a smooth variety $X/k$ there are canonical isomorphisms
\begin{equation}\label{eq:hkr}
    H^{-n}(\HH(X/k))\iso\bigoplus_{s-t=n} H^t(X,\Omega^s_{X/k}).
\end{equation}

In this paper, we study to  what extent the decomposition \eqref{eq:hkr} might hold when  $k$ has positive characteristic $p > 0$. More precisely, given a smooth $k$-scheme $X$, the canonicity of the HKR isomorphism \eqref{HKRthm} already implies that one has an $E_2$-spectral sequence, which we call the HKR spectral sequence, of the form
\[ E_2^{s,t} = H^s(X,\Omega^{-t}_{X/k}) \Rightarrow H^{s+t} \HH(X/k).\]
  We ask here whether this spectral sequence always has to degenerate at $E_2$ (e.g., for smooth proper varieties). 

Degeneration of the HKR spectral sequence is known if $X/k$ is smooth proper of dimension $\leq p$  by work of 
Yekutieli~\cite{Yek02} (in the case where $\dim(X) < p$, in which case one also gets a
canonical decomposition) and by Antieau--Vezzosi~\cite{antieau-vezzosi} (which allows $\dim(X) = p$ as well). Additionally, Rao--Yang--Yang--Yu~\cite{ryyy} recently proved that HKR holds for
the blowup of a smooth proper $X$ along a smooth closed subscheme $Z$ if and only if it holds
for $X$ and $Z$. 
There are additional examples in which one can verify degeneration, e.g.,
smooth complete intersections in projective space  (see \cite[Ex. 1.7]{antieau-vezzosi}). 
In contrast, we prove the following result.

\begin{theorem}\label{thm:intro1}
    Let $k$ be a perfect field of characteristic $p>0$. There exists a smooth projective $2p$-dimensional $k$-scheme $X$ such that the HKR spectral sequence for $X$ does not degenerate, so there can be no Hodge decomposition of $\HH(X/k)$. Specifically, we construct such an $X$ for which the differential $d_p \colon H^0(X,\Omega^1_{X/k}) \to H^p(X,\wedge^p L_{X/k})$ in the HKR spectral sequence is nonzero.
\end{theorem}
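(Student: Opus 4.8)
\section*{Proof proposal for Theorem~\ref{thm:intro1}}

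The plan is to reduce the theorem to the construction of a single smooth projective $k$-scheme $X$ of dimension $2p$ equipped with a global $1$-form $\omega\in H^0(X,\Omega^1_{X/k})$ with $d_p(\omega)\neq 0$ in $H^p(X,\wedge^p L_{X/k})$. This suffices: the HKR spectral sequence of any smooth $k$-scheme has $E_2=E_p$, i.e. the differentials $d_2,\dots,d_{p-1}$ vanish for every smooth $X$ (a general feature of the HKR spectral sequence in characteristic $p$, and part of why the degeneration results of Yekutieli and Antieau--Vezzosi cover all $X$ with $\dim X\le p$); so a single nonzero $d_p$ already obstructs degeneration at $E_2$ and hence rules out a Hodge decomposition of $\HH(X/k)$. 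I would therefore spend all the effort on building $X$ and verifying $d_p(\omega)\neq 0$.

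The obstruction has a stacky origin, and the first step is to locate it on a classifying stack. Over $k$ (one may as well take $k=\mathbf{F}_p$) write $\mu_p=\ker([p]\colon \mathbf{G}_m\to\mathbf{G}_m)$; since $[p]$ acts by multiplication by $p$, hence by $0$, on Lie algebras in characteristic $p$, the fibre sequence $B\mu_p\to B\mathbf{G}_m\xrightarrow{[p]}B\mathbf{G}_m$ gives
\[
    L_{B\mu_p/k}\;\simeq\;\mathcal{O}_{B\mu_p}[-1]\;\oplus\;\mathcal{O}_{B\mu_p},
\]
a perfect complex concentrated in homological degrees $-1$ and $0$; call the generators $e$ and $f$. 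By décalage the derived exterior powers are $\wedge^i L_{B\mu_p/k}\simeq \mathcal{O}_{B\mu_p}\gamma_i(e)[-i]\oplus\mathcal{O}_{B\mu_p}\gamma_{i-1}(e)\,f\,[-i+1]$, built from the divided powers of $e$. Since $R\Gamma(B\mu_p,-)$ is exact, the $E_2$-page of the HKR spectral sequence of $B\mu_p$ consists precisely of the classes $\gamma_i(e)$ and $\gamma_{i-1}(e)f$; this page is much larger than the abutment $\HH(B\mu_p/k)$, which is small — it is $\HH(k[x]/x^p\,/\,k)$ via the Morita equivalence $\mathrm{QCoh}(B\mu_p)\simeq\mathrm{Mod}(k[x]/x^p)$, nonzero but finite in each degree — so there must be nonzero differentials, and a weight/degree count identifies the first as a $d_p$ sending $f\in H^0(B\mu_p,\Omega^1)$ to the divided power $\gamma_p(e)\in H^p(B\mu_p,\wedge^p L_{B\mu_p/k})$. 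This is the essential characteristic-$p$ input: $\gamma_p$ of the conormal direction of $\mu_p$ is not expressible through lower-weight operations. (The group $\alpha_p$ works equally well.)

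To realise this on a smooth projective scheme I would use a Totaro-style algebraic approximation of $B\mu_p$. Choose a $\mu_p$-equivariant vector bundle $\mathcal{E}$ over an auxiliary abelian variety $A$ (the abelian variety ensures the final compactification has global $1$-forms), arranged so that $\mu_p$ acts freely on the open complement $U$ of the subbundle fixed by $\mu_p$, with $\operatorname{codim}(\mathcal{E}\setminus U)$ as large as desired. Then $[U/\mu_p]$ is an honest smooth quasi-projective $k$-scheme, fibred over $A$, and $[U/\mu_p]\to B\mu_p\times A$ induces an isomorphism on coherent cohomology — and on HKR spectral sequences — in a range of degrees growing with that codimension, so for $\mathcal{E}$ large the nonzero $d_p$ above descends to $[U/\mu_p]$. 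Finally I would compactify $[U/\mu_p]$ to a smooth projective $X$ by an explicit construction (fibrewise projective-bundle or toric compactification over $A$; no resolution of singularities is required, which is fortunate in characteristic $p$). Taking $\mathcal{E}$ of the right rank makes $\dim X=2p$ and leaves enough room to arrange the boundary $X\setminus[U/\mu_p]$ to have codimension $>p$, so that it does not affect $H^0(X,\Omega^1_{X/k})$, $H^p(X,\Omega^p_{X/k})$, or the differential between them, and $d_p(\omega)\neq 0$ persists for the image $\omega$ of $f$.

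The main obstacle is precisely this algebraization step. First, one must set up the HKR spectral sequence and its functoriality carefully enough — including the convergence subtleties for the non-smooth stack $B\mu_p$ — that it can be compared along $B\mu_p\times A\leftarrow[U/\mu_p]\hookrightarrow X$. Second, and harder, one must produce a smooth projective compactification whose boundary is simultaneously deep enough (codimension $>p$) and explicit enough that one can prove the class $d_p(\omega)$ is genuinely nonzero, rather than being annihilated by, or confused with, boundary contributions. The value $2p$ for $\dim X$ is exactly what these competing requirements force: there must be room at once for $H^p(X,\Omega^p_{X/k})\neq 0$, for a good enough approximation of $B\mu_p$, and for a high-codimension boundary.
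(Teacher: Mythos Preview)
Your overall strategy---locate the nonzero $d_p$ on $B\mu_p$, then approximate by a smooth projective variety---is exactly the paper's. Two points diverge in execution.

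First, your identification $\mathrm{QCoh}(B\mu_p)\simeq\mathrm{Mod}(k[x]/x^p)$ is incorrect: since $\mu_p$ is Cartier dual to the constant group $\mathbf{Z}/p$, the category $\mathrm{QCoh}(B\mu_p)$ is that of $\mathbf{Z}/p$-graded vector spaces, i.e.\ a $p$-fold product of $\mathrm{Vect}_k$; your description applies instead to $B\alpha_p$. More to the point, the paper explicitly warns that the descent-defined $\HH(\Xscr/k)$---which is what the HKR spectral sequence converges to---need not agree with $\HH(\Perf(\Xscr)/k)$, so a Morita argument does not directly compute the abutment. The paper instead pins down the differential via Frobenius: the class $c \in H^1(B\mu_p, L_{B\mu_p})$ is a permanent cycle for degree reasons; since Frobenius on $B\mu_p$ factors through a point it annihilates $c$ in $H^0(\HH(B\mu_p/k))$, but Frobenius acts there as the $p$th-power map, so $c^p=0$ in the abutment and hence $c^p$ must be hit by a differential; a formality argument for $\HH/\Fil^p_{\HKR}$ then forces the source to be $d_p(d)$.

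Second, and this is where the genuine gap lies, the paper's approximation is not the Totaro open-in-a-representation picture you sketch---which, as you concede, leaves an unresolved smooth-compactification problem in characteristic $p$---but the Godeaux--Serre construction: choose a faithful representation $V$ of $\mu_p$ and use Bertini to cut out a $2p$-dimensional $\mu_p$-stable complete intersection $Z \subset \mathbf{P}(V)$ on which $\mu_p$ acts freely with $X := Z/\mu_p$ smooth and projective. No compactification or resolution is ever needed; $X$ is smooth projective from birth. The price is that $Z$ itself may be singular, so the paper proves a weak Lefschetz theorem for Hodge cohomology of possibly singular complete intersections; this shows that $Z \hookrightarrow \mathbf{P}(V)$, and hence $X=[Z/\mu_p] \to [\mathbf{P}(V)/\mu_p]$, induces injections on $H^s(-,\wedge^t L)$ for $s+t\le 2p$. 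The projective bundle formula for $[\mathbf{P}(V)/\mu_p]\to B\mu_p$ then completes the comparison. Your abelian variety is a red herring: the global $1$-form on $X$ is simply the pullback of $d\in H^0(B\mu_p, L_{B\mu_p})$.
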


We also give related examples where the de Rham--$\HP$ and crystalline--$\TP$
spectral sequences constructed
in~\cite{BMS2} are non-degenerate and examples where the crystalline--$\TP$ 
spectral sequence gives a non-split filtration on $H^*(\TP(X))$. For details, see
Theorem~\ref{thm:other}.

Our method is to understand these spectral sequences in the case where we
replace the scheme $X$ by the classifying stack $BG$ of a group scheme $G$. In
fact, slightly surprisingly, $G = \mu_p$ already leads to
Theorem~\ref{thm:intro1}. In this case, the reason for non-degeneracy of the
HKR spectral sequence is relatively easy to describe, at least informally: the
Hochschild homology of $B\mu_p$ is concentrated in degree $0$ (as the category
of quasicoherent sheaves on $B\mu_p$---or equivalently the category of
representations of $\mu_p$---is just the $p$-fold direct sum of the category
of vector spaces), while the Hodge cohomology of $B\mu_p$ is not concentrated
in degree $0$ (as there are non-trivial $1$-forms on $B\mu_p$ arising from the
singularities of $\mu_p$ as a scheme).

To pass from the stacks discussed above to the examples of
Theorem~\ref{thm:intro1}, we approximate $BG$ by smooth projective varieties,
i.e., we find maps $X \to BG$ with $X$ smooth projective such that the
pullback map on various cohomology theories considered above is injective.
More specifically, we prove the following theorem.

\begin{theorem}\label{thm:approximationintro}
    Suppose that $k$ is a perfect field  of characteristic $p > 0$ and that $G$
	is an affine $k$-group scheme which
    is either finite or geometrically reductive. For any integer $d \geq 0$, there exists a
    smooth projective $k$-scheme $X$ of dimension $d$ together with a map
    $X\rightarrow BG$ such that the pullback $H^s(BG, \bigwedge^t L_{BG/k}) \to H^s(X, \bigwedge^t L_{X/k})$ is injective for $s+t \leq d$.
    
\end{theorem}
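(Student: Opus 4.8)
The plan is to approximate $BG$ by smooth \emph{projective} varieties, in the spirit of Totaro's approximation of classifying spaces, and then cut the dimension down to $d$ by a weak-Lefschetz argument. Two features of characteristic $p$ must be confronted along the way: Hodge cohomology is not $\AA^1$-invariant, and resolution of singularities is unavailable. The first is dealt with by folding the scaling torus $\Gm$ into the group; the second forces one to work with finite or GIT quotients of projective space rather than with smooth compactifications. Concretely, fix a faithful finite-dimensional representation $V$ of $G$ (it exists since $G$ is an affine $k$-group scheme of finite type, being finite or geometrically reductive). For $m\gg 0$ let $G\times\Gm$ act on $V^{\oplus m}$, with $\Gm$ acting by scaling, and let $U_m\subseteq V^{\oplus m}$ be the open locus on which this action is free; faithfulness of $V$ forces the codimension of the complement to be at least $m$. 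The quotient $\bar Y_m^{\circ}:=U_m/(G\times\Gm)$ is a smooth quasi-projective $k$-scheme ($U_m$ is regular and $U_m\to\bar Y_m^{\circ}$ is faithfully flat, so $\bar Y_m^{\circ}$ is regular, hence smooth over the perfect field $k$), with a tautological map to $B(G\times\Gm)$ and hence to $BG$. Now $[V^{\oplus m}/(G\times\Gm)]$ is the total space of the equivariant vector bundle $V^{\oplus m}$ over $B(G\times\Gm)$, and in $\wedge^{t}$ of its cotangent complex a weight computation for the scaling $\Gm$ shows that every term built from a symmetric or exterior power of $(V^{\oplus m})^{\vee}$ carries a nonzero $\Gm$-weight, hence contributes nothing to $R\Gamma(B(G\times\Gm),-)=R\Gamma(BG,(-)^{\Gm})$; thus $R\Gamma([V^{\oplus m}/(G\times\Gm)],\wedge^{t}L)\simeq R\Gamma(B(G\times\Gm),\wedge^{t}L_{B(G\times\Gm)/k})$. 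Excising the non-free locus (codimension $\geq m$) then gives $R\Gamma(\bar Y_m^{\circ},\Omega^{t}_{\bar Y_m^{\circ}/k})\simeq R\Gamma(B(G\times\Gm),\wedge^{t}L_{B(G\times\Gm)/k})$ on $H^{s}$ for $s+t\leq c(m)$ with $c(m)\to\infty$, and since $L_{B(G\times\Gm)/k}$ is the direct sum of the pullbacks of $L_{BG/k}$ and $L_{B\Gm/k}$, taking exterior powers exhibits $R\Gamma(BG,\wedge^{t}L_{BG/k})$ as a natural direct summand of the right-hand side; so the pullback $H^{s}(BG,\wedge^{t}L_{BG/k})\to H^{s}(\bar Y_m^{\circ},\Omega^{t}_{\bar Y_m^{\circ}/k})$ is (split) injective for $s+t\leq c(m)$.

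Next I would observe that $\bar Y_m^{\circ}$ is the smooth locus of a projective variety whose complement has codimension growing with $m$. Modding out the scaling $\Gm$ first presents $\bar Y_m^{\circ}$ as the free locus of the $G$-action on an open subset of $\PP(V^{\oplus m})$, and one takes $\bar Y_m:=\PP(V^{\oplus m})/G$ when $G$ is finite (the quotient of a projective variety by a finite group scheme, which exists and is projective via an ample $G$-linearized line bundle), and $\bar Y_m:=\PP(V^{\oplus m})/\!\!/ G$ for a suitable linearization when $G$ is geometrically reductive, chosen so that the stable locus coincides with the free locus and the unstable and non-free loci have codimension $\gg d$; here geometric reductivity enters through Haboush--Nagata, to guarantee finite generation of invariants and hence projectivity of the GIT quotient in characteristic $p$. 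Now let $X$ be a generic complete intersection of $\dim\bar Y_m-d$ hypersurfaces of sufficiently high degree in $\bar Y_m$. Then $\dim X=d$; since $\bar Y_m\setminus\bar Y_m^{\circ}$ has codimension $>d$, the variety $X$ is disjoint from it and so lies in the smooth open $\bar Y_m^{\circ}$; $X$ is smooth by Bertini (which holds in characteristic $p$ for generic high-degree hypersurface sections of a smooth variety); and $X$ is projective, with composite $X\hookrightarrow\bar Y_m^{\circ}\to BG$.

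It then remains to show that restriction along $X\hookrightarrow\bar Y_m^{\circ}$ is injective on $H^{s}(-,\Omega^{t})$ for $s+t\leq d$. Iterating over the $\dim\bar Y_m-d$ successive hypersurface sections reduces this to the statement that for a smooth variety $W$ and a sufficiently ample smooth divisor $Z\subseteq W$ the restriction $H^{s}(W,\Omega^{t}_{W})\to H^{s}(Z,\Omega^{t}_{Z})$ is an isomorphism for $s\leq\dim Z-2$, injective for $s\leq\dim Z-1$, and (when $t=0$) injective for $s\leq\dim Z$. In characteristic $p$ this cannot be obtained from Akizuki--Nakano vanishing, which fails, but it does follow from the two conormal exact sequences together with the vanishing $H^{q}(W,\Omega^{p}_{W}(-Z))=0$ for $q<\dim W$, which holds once $Z$ is sufficiently ample by Serre duality and Serre vanishing; tracking the ranges through the $\dim\bar Y_m-d$ steps yields injectivity of $H^{s}(\bar Y_m^{\circ},\Omega^{t})\to H^{s}(X,\Omega^{t}_{X})$ precisely for $s+t\leq d$. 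Composing with the injection of the first step, for $m$ large enough that $c(m)\geq d$, completes the proof.

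The main obstacle is the middle step: producing a projective model $\bar Y_m$ whose singular locus has codimension $\gg d$. Compactifying $\bar Y_m^{\circ}$ to something smooth is blocked in characteristic $p$ by the lack of resolution of singularities (and, for geometrically reductive $G$, $\bar Y_m^{\circ}$ admits no smooth projective compactification with small boundary at all), which is why one must instead realize $\bar Y_m^{\circ}$ inside a finite or GIT quotient of projective space and control the unstable and non-free loci by hand; this, together with the characteristic-$p$ form of weak Lefschetz used in the last step, is the technical heart of the argument.
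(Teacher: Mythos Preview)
Your overall architecture---approximate $BG$ by an open in a projective quotient and then cut down to dimension $d$ by complete intersections---matches the paper's, and your first step (the weight argument identifying the Hodge cohomology of $[V^{\oplus m}/(G\times\Gm)]$ with that of $B(G\times\Gm)$, plus excision of the non-free locus) is a legitimate alternative to the paper's use of the projective bundle formula for $[\PP(V)/G]\to BG$.

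The gap is in the weak Lefschetz step. You want to iterate hypersurface sections from $\bar Y_m$ (or $\bar Y_m^\circ$) down to $X$ and at each stage invoke Serre duality plus Serre vanishing to get $H^q(W,\Omega^p_W(-Z))=0$ for $q<\dim W$. But this argument needs each intermediate $W$ to be smooth projective: if you work in $\bar Y_m^\circ$ the ambient is smooth but not projective, so ``sufficiently ample'' has no meaning and Serre vanishing does not apply; if you work in $\bar Y_m$ the ambient is projective but singular, and the first several hypersurface sections will meet the singular locus (its codimension is large but fixed, while early sections have small codimension), so they are not smooth and the Serre duality step fails. Granting that the singular locus has codimension $\gg d$ only guarantees that the \emph{final} $X$ avoids it, not the intermediate ones.

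The paper circumvents this by reversing the order of operations: it takes a $G$-stable complete intersection $Z\subset\PP(V)$ \emph{before} quotienting, proves weak Lefschetz for the possibly singular $Z\hookrightarrow\PP(V)$ (this is why \S5 develops the Kodaira-pair formalism for syntomic, not just smooth, complete intersections in $\PP^n$), and only then passes to the quotient stack via the observation that Hodge $d$-equivalences are preserved under taking $[-/G]$. The point is that $(\PP^n,\Oscr(1))$ is known to be a Kodaira pair, whereas $(\bar Y_m,\Oscr(1))$ is not obviously one.

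For geometrically reductive $G$ the paper also takes a different route from yours: rather than setting up GIT in characteristic $p$ and controlling unstable loci, it reduces to the finite case by showing $H^i(BG,\wedge^j L_{BG})\simeq\varprojlim_r H^i(BG_r,\wedge^j L_{BG_r})$ for the Frobenius kernels $G_r$, using finite-dimensionality and Jantzen's comparison of rational cohomology with the inverse limit over Frobenius kernels. This sidesteps the delicate questions about stable/free loci that your GIT approach would have to confront.
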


The geometric idea behind finding such approximations goes back to the work of
Godeaux and Serre. However, as the relevant group schemes $G$ are not smooth,
one runs into possibly singular complete intersections in projective space as
intermediate objects in this argument. To handle their cohomology, we prove a
version of the weak Lefschetz theorem for Hodge cohomology for such complete
intersections.

We also consider the twisted version of this question. 
Let $X/k$ be a smooth $k$-scheme and let $\alpha\in\H^2(X,\Gm)$ be a cohomological Brauer class. One constructs a twisted form
$\HH(X/k,\alpha)$ of Hochschild homology as in the study of twisted $K$-theory.
In fact, if $\alpha$ is the Brauer class of an Azumaya algebra $\Ascr$, then
$\HH(X/k,\alpha)\we\HH(\Ascr/k)$.
By work of Corti\~{n}as--Weibel~\cite{cortinas-weibel}, there is a spectral sequence
$$E_2^{s,t}=H^s(X,\Omega^{-t}_{X/k})\Rightarrow H^{s+t}(\HH(X/k,\alpha)).$$
We call this the $\alpha$-twisted HKR spectral sequence; when $\alpha=0$, it is
the HKR spectral sequence. In general, the terms of the $E_2$-page are the
same as in the untwisted case,
but the differentials might be different. When $k$ is a field and $X$ is
additionally proper over $k$, the degeneration of the
HKR spectral sequence is equivalent to the existence of an isomorphism as
in~\eqref{eq:hkr}.

\begin{theorem}\label{thm:intro2}
    Let $k$ be a field of characteristic $p>0$. There exists a smooth
    projective threefold $X$ over $k$ and a Brauer class $\alpha\in\Br(X)$ such that the
    $\alpha$-twisted HKR spectral sequence does not degenerate. Specifically, we construct such an $X$ for which the
    differential
    $d_2^\alpha\colon H^0(X,\Omega^0_{X/k})\rightarrow H^2(X,\Omega^1_{X/k})$ in the $\alpha$-twisted HKR spectral sequence is
    nonzero.
\end{theorem}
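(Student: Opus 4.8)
The plan is to run the strategy behind Theorem~\ref{thm:intro1} in the twisted setting: first produce a classifying stack $BG$, with $G$ a finite $k$-group scheme, together with a Brauer class $\alpha_0\in\Br(BG)$ for which the $\alpha_0$-twisted HKR spectral sequence of $BG$ has nonzero differential $d_2^{\alpha_0}\colon H^0(BG,\Oscr_{BG})\to H^2(BG,\bigwedge^1 L_{BG/k})$, and then transport this to a smooth projective threefold via the approximation of Theorem~\ref{thm:approximationintro}. Granting the claim about $BG$, the theorem follows formally. As the group scheme, Brauer class, and cotangent-complex computations below are all defined over $\FF_p$, we may work over $k=\FF_p$; Theorem~\ref{thm:approximationintro} (applicable since $\FF_p$ is perfect and $G$ finite) with $d=3$ produces a smooth projective geometrically connected threefold $X_0$ with a map $f\colon X_0\to BG$ such that $H^s(BG,\bigwedge^t L_{BG})\to H^s(X_0,\bigwedge^t L_{X_0})$ is injective for $s+t\le 3$. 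Set $\alpha:=f^*\alpha_0$; it is $p$-torsion, and since $X_0$ is quasiprojective over a field we have $\Br(X_0)=H^2(X_0,\Gm)_{\tors}$ (de Jong, Gabber), so $\alpha\in\Br(X_0)$. The twisted Hochschild complex and its HKR filtration are functorial for $f$ (both are assembled from $f^*L_{BG}\to L_{X_0}$ together with pullback of twisted sheaves), hence so is the twisted HKR spectral sequence; since $H^0(BG,\Oscr_{BG})\xrightarrow{\ \sim\ }H^0(X_0,\Oscr_{X_0})$ and $H^2(BG,\bigwedge^1 L_{BG})\hookrightarrow H^2(X_0,\Omega^1_{X_0/k})$, we get $d_2^\alpha(1)=f^*\bigl(d_2^{\alpha_0}(1)\bigr)\ne 0$. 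Finally base change along $\FF_p\to k$ for an arbitrary field $k$ of characteristic $p$; the equivalence with the failure of \eqref{eq:hkr} is the statement recalled just before Theorem~\ref{thm:intro2}.

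For the model stack I would take $G=\mu_p\times\ZZ/p$ and let $\alpha_0\in\Br(BG)$ be the class of the $\Gm$-gerbe attached to the Heisenberg central extension $1\to\Gm\to\widetilde G\to G\to 1$ determined by the canonical perfect pairing $\mu_p\times\ZZ/p\to\Gm$ (the one exhibiting $\ZZ/p=\widehat{\mu_p}$). The point is that every invariant in sight is computable. First, by construction the $\alpha_0$-twisted quasicoherent complexes on $BG$ form the category of weight-one representations of $\widetilde G$; by the Stone--von Neumann theorem for finite Heisenberg group schemes there is, up to isomorphism, a unique simple such object $V$ (a rank-$p$ twisted vector bundle on $BG$), and this category is equivalent to $\mathrm{Mod}(\FF_p)$ — semisimple, in contrast to the non-semisimple $\mathrm{Rep}(G)$ — so $\HH(BG/\FF_p,\alpha_0)\we\FF_p$, concentrated in homological degree $0$. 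Second, since $\ZZ/p$ is \'etale one has $L_{BG/\FF_p}\we L_{B\mu_p/\FF_p}\we\Oscr_{BG}\oplus\Oscr_{BG}[-1]$ with both summands the trivial character (the degree-$0$ summand being the nonzero $\Omega^1_{B\mu_p}=\Oscr_{B\mu_p}$ responsible for the ``nontrivial $1$-forms'' on $B\mu_p$, the shifted one coming from $B\Gm$), hence $\bigwedge^j L_{BG}\we\Oscr_{BG}[-j]\oplus\Oscr_{BG}[-(j-1)]$. Third, by K\"unneth and the linear reductivity of $\mu_p$, $H^*(BG,\Oscr_{BG})=H^*(\ZZ/p,\FF_p)$, which is nonzero in every nonnegative degree; in particular $H^2(BG,\bigwedge^1 L_{BG})\supseteq H^2(BG,\Oscr_{BG})=H^2(\ZZ/p,\FF_p)\ne 0$, so the target of $d_2^{\alpha_0}$ is nonzero.

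It remains to see that $d_2^{\alpha_0}(1)\ne 0$, and this is where the real work lies. The $E_2$-page of the twisted HKR spectral sequence of $BG$ is the Hodge cohomology $\bigoplus_{s,t}H^s(BG,\bigwedge^{-t}L_{BG})$ — each total-degree line already has infinitely many nonzero entries — whereas it abuts to $H^*(\HH(BG,\alpha_0))=\FF_p$ concentrated in degree $0$, so almost everything is killed by differentials. In particular $E_\infty^{0,0}$ is a subquotient of the one-dimensional group $H^0(\HH(BG,\alpha_0))$, and I claim it vanishes: that group is generated by the class $[\mathrm{id}_V]$ of the identity of the rank-$p$ twisted bundle $V$, and the image of $[\mathrm{id}_V]$ in the bottom graded piece $\gr^0=H^0(BG,\Oscr_{BG})$ of the HKR filtration is the rank of $V$, namely $p\equiv 0$; since $[\mathrm{id}_V]$ nonetheless generates, the generator lies in positive HKR filtration, so $\gr^0 H^0(\HH(BG,\alpha_0))=E_\infty^{0,0}=0$ and the unit of $E_2^{0,0}=H^0(BG,\Oscr_{BG})$ does not survive. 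To conclude it is killed already at $E_2$ — rather than by a later differential — and hits a nonzero class, I would invoke the Corti\~nas--Weibel description of the twisted differentials \cite{cortinas-weibel}: $d_2^{\alpha_0}$ is cup product with the image of $\alpha_0$ under a natural $\dlog$-type boundary map $\Br(BG)\to H^2(BG,\bigwedge^1 L_{BG})$, so that $d_2^{\alpha_0}(1)$ is, up to sign, that image, which one then computes directly from the Heisenberg $2$-cocycle to be nonzero (equivalently, the HKR filtration jump of $[\mathrm{id}_V]$ is exactly $1$). Pinning down this last point — the identification and nonvanishing of the first twisted differential on $BG$, together with the construction of a twisted HKR spectral sequence for $BG$ functorial for maps from schemes — is the principal obstacle; the remainder of the argument is formal.
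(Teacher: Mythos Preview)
Your overall architecture---produce a classifying stack with a Brauer class, then approximate by a smooth projective threefold---matches the paper's, but the paper's execution is both different and cleaner, and your version has a genuine gap.

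The paper takes $G=\PGL_p$ (using the geometrically reductive case of Theorem~\ref{thm:approximationintro}) and the universal Brauer class on $B\PGL_p$. Crucially, it never runs a twisted HKR spectral sequence on the stack. Instead it proves, for \emph{schemes}, that $d_2^\alpha(1)=\dlog\alpha$ (Proposition~\ref{prop:differential}, via the Dennis trace and the identification of \'etale-sheafified twisted $K$-theory from \cite{antieau-cohomological,antieau-cech}), and then shows by a direct fiber-sequence computation (Proposition~\ref{notnullhomotopicdlog}) that the composite $B\PGL_p\to K(\mathbb{G}_m,2)\xrightarrow{B^2\dlog}K(\Omega^1,2)$ is not null, i.e.\ the universal class has nonzero image in $H^2(B\PGL_p,L_{B\PGL_p})$. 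Pulling back along a Hodge $3$-equivalence $X\to B\PGL_p$ gives $\dlog\alpha\neq 0$ in $H^2(X,\Omega^1_X)$, and Proposition~\ref{prop:differential} applied to the \emph{scheme} $X$ finishes. No stacky twisted HKR is needed; only Hodge cohomology of the stack enters, which is exactly what the approximation theorem controls.

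Your route through $B(\mu_p\times\ZZ/p)$ instead attempts to compute the abutment of a twisted HKR spectral sequence on $BG$ and force a differential. The principal problem is that your Stone--von~Neumann computation yields $\HH(\Perf^{\alpha_0}(BG)/k)\simeq k$, but a twisted HKR spectral sequence defined by descent would converge to $R\Gamma(BG,\ShHH)$, and these two objects need not agree for stacks---this is exactly the phenomenon flagged in the paper's Warning comparing $\HH(\Perf(\Xscr))$ with $\HH(\Xscr/k)$. Already in the untwisted case $\HH(\Perf(B\ZZ/p))=\HH(k[\ZZ/p]/k)$ is connective with $p$-dimensional $H^0$, while the descent version $\HH(B\ZZ/p/k)=R\Gamma(B\ZZ/p,\Oscr)$ is coconnective with one-dimensional $H^0$; so your abutment computation is for the wrong object. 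Relatedly, the ``rank'' argument is off: under the Corti\~nas--Weibel normalization used in the paper, the generator of $H^0(\ShHH)$ is the Dennis trace of the simple twisted module $[V]$, which locally is the class of $E_{11}\in M_p/[M_p,M_p]$ and maps to $1$, not $p$, in $\gr^0=\Oscr$; the element that maps to $p$ is the image of $\mathrm{id}_V=I_p$, which is $0$ and hence not a generator.

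If you want to salvage the finite-group route, the right move is to imitate the paper: forget the stacky spectral sequence, show directly that $\dlog\alpha_0\neq 0$ in $H^2(B(\mu_p\times\ZZ/p),L)$, pull back to $X$, and apply Proposition~\ref{prop:differential} to $X$. That nonvanishing is plausible (e.g.\ via the map $B(\mu_p\times\ZZ/p)\to B\PGL_p$ coming from the Heisenberg representation together with Proposition~\ref{notnullhomotopicdlog}), but you have not established it, and it is exactly the computation that does the work.
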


For $p=2$, we can do a little better and find surface examples. In fact, classical Enriques surfaces work in that case (Propositions~\ref{prop:differential} and \ref{prop:enriques}).

Theorem~\ref{thm:intro2} gives examples of a smooth projective $k$-schemes $X$ and
Azumaya algebras $\Ascr$ on $X$ such that  $\dim_k H^i(\HH(\Ascr/k)) \neq \dim_k H^i(\HH(X/k))$. This is in contrast to the
affine case of Corti\~{n}as--Weibel~\cite{cortinas-weibel} and shows that their theorem cannot be
globalized. These examples also lead to cases of $\PP^n$-bundles $P\rightarrow X$ such that
pullback kills Hodge and de Rham cohomology classes, a phenomenon which can
only exist in characteristic $p$. See Section~\ref{sec:conic}.

\medskip
\noindent
{\bf Conventions.} Throughout, we use cohomological indexing conventions. 
Given a commutative ring $k$, we will let $D(k)$ denote the derived $\infty$-category
of $k$-modules, and $\mathrm{Sp}$ denote the $\infty$-category of spectra. For the purposes of this paper, a $d$-dimensional scheme is by definition equidimensional. Moreover, when we say that a spectral sequence discussed in \S \ref{sec:ss} degenerates without specifying a page, the degeneration is always intended to begin at the first page where the spectral sequence is defined; when we assert that a spectral sequence does not degenerate, we specify the nonzero page and differential.

\medskip
\noindent
{\bf Acknowledgments.} We would like to thank Daniel Bragg, Lukas Brantner, Johan de Jong, 
H{\'e}l{\`e}ne Esnault,
Marci Hablicsek,
Daniel Halpern-Leistner,
Yank\i\, Lekili, Alexander Petrov, Bertrand To\"en, Burt Totaro, and Gabriele Vezzosi for discussions about
HKR in characteristic $p$.

This material is partially based upon work supported by the National Science Foundation
under Grant No. DMS-1440140 while the authors were in residence at the
Mathematical Sciences Research Institute in Berkeley, California, during the
Spring 2019 semester.
The first author was partially supported by NSF Grant DMS-1552766.
The second author was partially supported by the NSF grants \#1501461 and \#1801689, a Packard fellowship and the Simons Foundation grant \#622511.
This work was done while the third author was a Clay Research Fellow.

\section{Hochschild homology and de Rham cohomology of
stacks}\label{sec:invariants}

Fix a commutative ring $k$. In this section, we introduce the various cohomology
theories that we shall use later in the context of algebraic stacks over $k$.
Our strategy is to define the cohomology of a stack via descent. To simplify
definitions and avoid subtleties, we work with the syntomic topology and stick
to stacks which are themselves syntomic (as defined next); this includes all
smooth (or even lci) algebraic stacks over $k$, which is sufficient for our purposes.

\begin{notation}
A map $R \to S$ of commutative rings is called a {\bf syntomic map} if it is
flat and finitely presented with $L_{S/R} \in D(S)$ having Tor amplitude in
$[-1,0]$; there is a similar definition for maps of schemes.  Let
$\mathrm{Syn}_k$ denote the category of syntomic $k$-algebras; its opposite
category $\mathrm{Syn}_k^{op}$, equipped with the Grothendieck topology where
covers are given by finite families of syntomic maps that are jointly faithfully
flat, is called the {\bf syntomic site of $k$}. An algebraic stack
$\Xscr/k$ is called {\bf syntomic} if there exists a syntomic cover $U \to \Xscr$ with $U$ a syntomic $k$-scheme.
\end{notation}

\begin{example}
\label{synstacks}
Say $G/k$ is a flat and finitely presented affine group scheme. It is known that
$G$ is a syntomic $k$-scheme. It follows that $B G$ is a syntomic $k$-stack:
the canonical map $\mathrm{Spec}(k) \to BG$ is a syntomic cover as it is a
$G$-torsor. In fact, $BG$ is actually a smooth $k$-stack: it suffices to check
this fiberwise, and there it follows by realizing $G$ as a closed subgroup
scheme $G \hookrightarrow \mathrm{GL}_n$ and noting that the resulting map
$\mathrm{GL}_n/G \to BG$ is a smooth surjection with a smooth source.
Nevertheless, it is often more convenient in calculations to work with the
syntomic cover $\mathrm{Spec}(k) \to BG$ (which is functorially defined in
the group scheme $G$) rather than some non-canonical smooth atlas for $BG$.
\end{example}

\newcommand{\SCR}{\mathrm{sCAlg}}
\newcommand{\sCAlg}{\mathrm{sCAlg}}

Our goal is to give a definition of Hochschild and derived de Rham cohomology
(as well as variants) for syntomic $k$-stacks. Let us first recall the
definitions of the relevant functors  in the affine case, i.e., as functors on
$\mathrm{Syn}_k^{op}$; we shall later extend these to all syntomic $k$-stacks
via descent.

\begin{definition}
\label{def:invariants}
Fix $R \in \mathrm{Syn}_k$. 
\begin{enumerate}[(a)]
\item For $i \geq 0$, we write $\bigwedge^i L_{R/k} \in D(R)$ for the $i$th derived wedge
power (in $R$-modules) of the cotangent complex $L_{R/k}$; these together form
the \textbf{Hodge cohomology} of $R$. 
    \item We let $\HH(R/k) = R \otimes^L_{R \otimes_k R } R \in D(k)$ denote the  {\bf Hochschild homology} of $R$ relative to $k$. 

 \item 
The object $\HH(R/k)$ is equipped with  a $k$-linear $S^1$-action, and we let 
$\HC^-(R/k) = \HH(R/k)^{hS^1} \in D(k)$ denote the \textbf{negative cyclic homology} and 
$\HP(R/k) = \HH(R/k)^{tS^1} \in D(k)$ denote the \textbf{periodic cyclic homology.}

\item We let $\THH(R) \in \Sp$ denote the \textbf{topological Hochschild
homology} of
$R$, which is a spectrum (even an $E_\infty$-ring spectrum) with an
$S^1$-action; we write $\TP(R) = \THH(R)^{tS^1}$ for its \textbf{topological periodic
cyclic homology}. See \cite{nikolaus-scholze} for a modern account of
topological Hochschild homology and the structure on it.  
Recall also that
if $k$ is a perfect field of characteristic $p$, then 
$\TP(R)/p \simeq \HP(R/k)$, as in \cite[Theorem 6.7]{BMS2} or \cite[Theorem
3.4]{antieau-mathew-nikolaus}. Moreover, while $\TP(R)$ is typically a
spectrum, if $R$ is an $\FF_p$-algebra, $\TP(R)$ is naturally an
object of $D(\ZZ_p)$.

\item We let $L \Omega_{R/k}$ denote the \textbf{derived de Rham complex}
of $R$, equipped with the \textbf{derived Hodge filtration} $\Fil_H^\star
L\Omega_{R/k}=L
\Omega^{\geq\star}_{R/k}$. Then $L\Omega^{\geq
\star}_{R/k}$ is naturally an object of the filtered derived category $DF(k)$
(see \cite[\S 5]{BMS2} for more on the filtered derived category). By
definition, if $R$ is a finitely generated polynomial ring over $k$, then
$L\Omega^{\geq \star}_{R/k} \simeq \Omega^{\geq \star}_{R/k}$, i.e., the derived de Rham complex with the derived
Hodge filtration agrees with the ordinary de Rham complex and the
filtration b\^{e}te. We then define
$L\Omega^{\geq \star}_{R/k}$ for general $R$ via left Kan
extension. When $k$ has characteristic $p> 0$ (which will be the case in our
applications), it follows from \cite{bhatt-padic} that $L\Omega^{\geq \star}_{R/k} \simeq \Omega^{\geq
\star}_{R/k}$ for smooth $k$-algebras; in particular, $L\Omega_{R/k}$ is
complete for the Hodge filtration for $R$ smooth. 

\begin{remark}
\label{dRcrys}
Assume $k$ has characteristic $p$ and is perfect. It was shown in
\cite[Theorem~3.27]{bhatt-padic} that derived de Rham cohomology of syntomic
algebras can be computed via crystalline cohomology, i.e., for any syntomic
$k$-algebra $R$, there is a natural isomorphism $L\Omega_{R/k} \simeq
R\Gamma_{\crys}(\mathrm{Spec}(R)/k)$, with the Hodge filtration on
$L\Omega_{R/k}$ matching up with the filtration coming from divided powers of
the ideal sheaf on $R\Gamma_{\crys}(\mathrm{Spec}(R)/k)$. Thus, this invariant
admits a ``non-derived'' definition. The derived definition is nevertheless
useful as it is often easy to compute the cotangent complex and its derived
exterior powers (especially once we extend to stacks).
\end{remark}

\item Suppose $k$ is a perfect ring of characteristic $p$. In this situation,
    we write $R\Gamma_{\crys}(\mathrm{Spec}(R)) \in D(W(k))$ for the {\bf
    crystalline cohomology} of $R$; as $k$ is perfect, we can take this to mean
    either absolute crystalline cohomology relative to the pd-base
    $(\mathbf{Z}_p,(p))$ or crystalline cohomology relative to the pd-base
    $(W(k),(p))$ without changing its meaning.  By generalities on crystalline
    cohomology and Remark~\ref{dRcrys}, we can regard
    $R\Gamma_{\crys}(\mathrm{Spec}(R)) \in D(W(k))$ as a lift of $L\Omega_{R/k}
    \in D(k)$. We refer to \cite[\S 8]{BMS2} for a further discussion of this
    theory, including a description via the derived de Rham--Witt complex.

\end{enumerate}
\end{definition} 

In order to extend these functors to syntomic $k$-stacks, we need the following descent result:

\newcommand{\triplearrows}{\begin{smallmatrix} \to \\ \to \\ 
\to \end{smallmatrix} }

\begin{theorem}
\label{BMS2descent}
The following assignments give sheaves on $\mathrm{Syn}_k^{op}$:
\begin{enumerate}
    \item[{\rm (1)}]  the $D(k)$-valued functors $R \mapsto \bigwedge^i
        L_{R/k}$ (for all $i \geq 0$), $\HH(R/k), \HC^-(R/k), \HP(R/k)$;
\item[{\rm (2)}]  the $\mathrm{Sp}$-valued functors $R \mapsto \THH(R), \TP(R)$;
\item[{\rm (3)}] the $D(k)$-valued functor $R
    \mapsto {L\Omega}_{R/k}$ when $k$ has characteristic $p$;
\item[{\rm (4)}] the  $D(W(k))$-valued functor $R \mapsto
    R\Gamma_\crys(\mathrm{Spec}(R))$ when $k$ is perfect of characteristic $p$. 
\end{enumerate}
   In other words, given a syntomic cover $R \to R'$, the natural
    maps
    \[ F(R) \to \mathrm{Tot}\left( \cosimp{F(R')}{F(R' \otimes_R R')}{F(R' \otimes_R R' \otimes_R R')} \right) \]
 are equivalences for $F$ any of the above functors on $\mathrm{Syn}_k$.
\end{theorem}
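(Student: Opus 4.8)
The plan is to reduce the statement to the analogous descent results already established in the literature for $\THH$, $\TP$, and the cotangent complex, and then transfer them to the remaining functors via standard comparison maps. First I would recall that for a syntomic cover $R \to R'$, the Amitsur/\v{C}ech complex of $R'$ over $R$ computes $R$ by faithfully flat descent, so the claim is really a statement about each functor commuting with the totalization of this cosimplicial diagram. The key input, already available in the literature, is that $\THH(-)$ (and hence $\TP(-)$ after taking $(-)^{tS^1}$, which commutes with limits) satisfies flat descent: this is essentially a theorem of Bhatt--Morrow--Scholze, built on the fact that $\THH$ is a localizing invariant and that flat descent for $\THH$ follows from the Hochschild-homology case together with the identification of $\THH$ as a filtered colimit. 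Granting the $\THH$ and $\TP$ statements in part (2), I would deduce part (1): $\HH(-/k)$ is recovered from $\THH$ by base change along $\THH(k) \to k$ (or, directly, $\HH(-/k)$ satisfies flat descent by the same localizing-invariant argument, or even more elementarily since $\HH(R/k) = R \otimes^L_{R\otimes_k R} R$ and the two-sided bar complex is compatible with flat base change), and then $\HC^-$ and $\HP$ follow by taking homotopy fixed points and Tate constructions, which preserve the totalizations in question since limits commute with limits.

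Next I would handle the cotangent-complex and derived-de-Rham statements. The functors $R \mapsto \bigwedge^i L_{R/k}$ satisfy flat (indeed fppf, even quasisyntomic) descent: this is a theorem going back to the fact that the cotangent complex is compatible with flat base change and, more to the point, is proven in this generality in the quasisyntomic-descent framework of Bhatt--Morrow--Scholze (their Theorem on quasisyntomic descent for $L\Omega$, $\bigwedge^i L$, etc.). Concretely, one reduces $\bigwedge^i L_{R/k}$ to the polynomial-ring case by left Kan extension, where the statement is the classical one, and then uses that left Kan extension along the inclusion of polynomial algebras into syntomic algebras is compatible with the relevant descent because syntomic maps are of finite Tor-amplitude. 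Part (3), the descent for $L\Omega_{R/k}$ in characteristic $p$, then follows by assembling the graded pieces: the Hodge filtration has associated graded $\bigwedge^i L_{R/k}[-i]$, each of which satisfies descent by part (1), and for syntomic $R$ the derived de Rham complex is Hodge-complete (one should check completeness is preserved, or invoke the crystalline comparison of Remark~\ref{dRcrys}), so descent for the graded pieces plus a convergence argument gives descent for $L\Omega$. Finally, part (4) for crystalline cohomology over $W(k)$ follows either from part (3) by deformation/base-change along $W(k) \to k$ together with the derived crystalline comparison, or directly from flat descent for crystalline cohomology of syntomic algebras, again a result of Bhatt--Morrow--Scholze.

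The main obstacle I anticipate is \textbf{not} any single deep new ingredient --- all the pieces exist --- but rather the bookkeeping of reducing everything cleanly to the polynomial case via left Kan extension while respecting the Grothendieck topology: one must check that left Kan extension from polynomial $k$-algebras to $\mathrm{Syn}_k$ takes the (trivially descending) restrictions of these functors to functors that still descend for \emph{syntomic} covers, which is where the finite-Tor-amplitude hypothesis on $L_{S/R}$ in the definition of syntomic is essential (it guarantees that the relevant derived tensor products and exterior powers behave, and that base change along a syntomic cover interacts well with the left Kan extension). A secondary subtlety is the convergence/completeness issue for $L\Omega_{R/k}$ needed to pass from descent on graded pieces to descent on the total object; but since we only need syntomic $R$ and $k$ of characteristic $p$, Hodge-completeness (equivalently, the crystalline description) is available and resolves this. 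In the write-up I would therefore state the polynomial-case descent, invoke the BMS2 quasisyntomic-descent results for $\THH$, $\TP$, $L\Omega$, $\bigwedge^i L$, and $R\Gamma_\crys$, and spend the bulk of the argument verifying the left-Kan-extension compatibility and the filtered-colimit/Tate-construction formal manipulations deriving $\HH$, $\HC^-$, $\HP$ from $\THH$.
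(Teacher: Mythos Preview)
Your overall plan---cite the descent results already in the literature and derive the rest formally---is exactly what the paper does. Its proof is two sentences: \cite[Remark~2.8]{bhatt-completions} for $\bigwedge^i L$, \cite[\S 3]{BMS2} for all of (1) and (2), \cite[Example~5.12]{BMS2} for (3), and then (4) from (3) using that $R\Gamma_\crys(\Spec(R))$ is derived $p$-complete with $R\Gamma_\crys(\Spec(R))/p \simeq L\Omega_{R/k}$. Your deduction of (4) from (3) is the same argument.

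There is, however, a genuine error in your chain of implications. You write that $\TP$ descent follows from $\THH$ descent, and $\HP$ from $\HH$, because ``$(-)^{tS^1}$ commutes with limits'' (``limits commute with limits''). This is false: the Tate construction sits in a cofiber sequence $X_{hS^1} \to X^{hS^1} \to X^{tS^1}$, and while homotopy fixed points commute with limits, homotopy orbits do not, so neither does Tate. The paper is explicitly sensitive to this point: the Warning immediately after Construction~\ref{cons:invariantsstacks} notes that $\HP(\Xscr/k)$ can differ from $\HH(\Xscr/k)^{tS^1}$ for precisely this reason, and Remark~\ref{twodifferentHP} exhibits $B\alpha_p$ as an explicit counterexample. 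So your argument for $\HC^-$ is fine, but the step to $\TP$ and $\HP$ is not. When you ultimately invoke \cite[\S 3]{BMS2} you will get the conclusion, but not via the mechanism you describe; the BMS2 argument for the Tate-type invariants is more delicate.

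A smaller issue: your route to (3) via the Hodge filtration needs $L\Omega_{R/k}$ to be Hodge-complete for \emph{syntomic} (not just smooth) $R$, which you rightly flag. The cleaner route---and the one underlying \cite[Example~5.12]{BMS2}---is the conjugate filtration, which is exhaustive with coconnective graded pieces $\bigwedge^i L_{R^{(1)}/k}[-i]$ on syntomic algebras, so descent passes through without any completeness hypothesis.
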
 
\begin{proof}
We refer to \cite[Remark 2.8]{bhatt-completions} for the cotangent complex. In
fact, \cite[\S 3]{BMS2} covers all the functors in (1) and (2), while
\cite[Example 5.12]{BMS2} covers (3); the claim in (4) follows formally from
that in (3) since $R\Gamma_\crys(\Spec(R))$ is derived $p$-complete 
(see \cite[Tag 091N]{stacks-project} for a treatment of this notion)
and $R\Gamma_\crys(\mathrm{Spec}(R))/p \simeq L\Omega_{R/k}$.
\end{proof}

\begin{remark}
    The results of \cite{BMS2} are more general in that they show descent for
    stronger Grothendieck topologies (such as the quasisyntomic topology for all
    classes of functors above, and even the flat topology for the first two).
    These stronger results are critical to the methods of \cite{BMS2}. However, for
    the purpose of geometric applications in this paper, the preceding generality
    suffices.
\end{remark}

\begin{construction}[Cohomology of stacks]
\label{cons:invariantsstacks}
Fix a sheaf $F$ of spectra on $\mathrm{Syn}^{op}_k$. As $F$ is a Zariski sheaf, we know how to make sense of $F(X)$ for any syntomic $k$-scheme $X$ by Zariski descent: we set $F(X) = R\Gamma(X_{Zar}^{aff}, F)$, where $X_{Zar}^{aff}$ denotes the category of affine opens in $X$ equipped with the usual topology. Similarly, if $\Xscr$ is a syntomic $k$-stack, there is a tautological way to make sense of $F(\Xscr)$ by syntomic descent, i.e., we set 
\[ F(\Xscr) := R\Gamma(\mathrm{Syn}^{op}_{k,/\Xscr}, F).\] 
In fact, we can be more explicit in practice: if $\Xscr$ is a
quasicompact syntomic $k$-stack with affine diagonal, then there exists an
affine syntomic cover $U \to \Xscr$ with $U$ a syntomic affine $k$-scheme, and
\v{C}ech descent gives an identification
\[ R\Gamma(\mathrm{Syn}^{op}_{k,/\Xscr}, F) \simeq \mathrm{Tot} \left( \cosimp{F(U)}{F(U \times_{\Xscr} U)}{F(U \times_{\Xscr} U \times_{\Xscr} U)} \right), \]
thus allowing one to compute the left side in terms of the value of $F$ on affines; a similar description applies to all syntomic $k$-stacks if one allows $U$ to be a possibly non-affine syntomic $k$-scheme. Applying this construction to the functors from Theorem~\ref{BMS2descent}, we can obtain the following functors on syntomic $k$-stacks:
\begin{enumerate}
    \item[(1)] the $D(k)$-valued functors $\Xscr \mapsto R\Gamma(\Xscr,
        \bigwedge^i L_{\Xscr/k}) \ (\forall i), \HH(\Xscr/k), \HP(\Xscr/k)$;
    \item[(2)] the $\mathrm{Sp}$-valued functors $\Xscr \mapsto \THH(\Xscr), \TP(\Xscr)$;
    \item[(3)]  the $D(k)$-valued functor $\Xscr
\mapsto R\Gamma_{\mathrm{dR}}(\Xscr/k)$ when $k$ has characteristic $p$;
\item[(4)] the $D(W(k))$-valued functor $\Xscr \mapsto R\Gamma_\crys(\Xscr)$ when
    $k$ is perfect of characteristic $p$.
\end{enumerate}
\end{construction}

The construction as a totalization above also immediately makes it clear that if
a sheaf $F$ has certain structural features when evaluated on syntomic $k$-schemes (resp. smooth $k$-schemes), it does so on syntomic $k$-stacks (resp. smooth $k$-stacks) as well. We shall implicitly exploit this observation later when extending certain natural filtrations on the invariants from Definition~\ref{def:invariants} to the stacky setting.
Nonetheless, there are some subtleties.  

\begin{remark}[Comparison with the stack-theoretic cotangent complex]
For a syntomic $k$-stack $\Xscr$, one can show that $R \Gamma( \Xscr,
L_{\Xscr/k})$ as defined above is the global sections of the stack-theoretic
cotangent complex $L_{\Xscr/k}$, considered as a quasicoherent complex on
$\Xscr$; this is the reason for the above notation. This follows 
from the transitivity triangle, which shows that the global sections 
of $L_{\mathfrak{X}/k}$ satisfies syntomic descent in $\mathfrak{X}$ (as in
\cite[Remark 2.8]{bhatt-completions}). 
\end{remark}

\begin{warning}[Comparison with Hochschild homology of perfect complexes]
Fix a syntomic $k$-stack $\Xscr$. The object $\HH(\Xscr/k)$ constructed above
does not (in general) coincide with $\HH(\Perf(\Xscr)/k)$, the Hochschild
homology of the $k$-linear stable $\infty$-category of perfect complexes on
$\Xscr$: there is always a natural map
$\HH(\Perf(\Xscr)/k)\rightarrow\HH(\Xscr/k)$, but it will not be an
equivalence in general. For example, for $\Xscr := \B\Gm$, one finds that both
sides are concentrated in degree $0$ where we obtain the completion map
$k[t^{\pm 1}]\rightarrow k\llbracket t-1\rrbracket$ (cf. \Cref{BGmHdR}). 
 \end{warning}

\begin{warning}[$\HP$ versus the Tate construction]
For any syntomic $k$-stack $\Xscr$, the object $\HH(\Xscr/k)$
inherits an $S^1$-action. 
It is always true that $\HC^-( \Xscr/k) = \HH(\Xscr/k)^{hS^1}$
since we can commute limits. However, $\HP( \Xscr/k)$
may differ from $\HH(\Xscr/k)^{tS^1}$ since the $S^1$-Tate construction
does not generally commute with limits (compare \Cref{twodifferentHP} below). 
\end{warning}

\newcommand{\prestk}{\mathrm{PreStk}}

\section{Spectral sequences for stacks}
\label{sec:ss}

Continuing the notation of \S \ref{sec:invariants}, we explain how the invariants introduced
in Construction~\ref{cons:invariantsstacks}  come equipped with certain natural filtrations
leading to spectral sequences. The differentials $d_r$ in our spectral sequences have bidegree
$(r,1-r)$.

\begin{definition}
\label{def:sss}
If a sheaf $F$ of spectra on $\mathrm{Syn}_k^{op}$ is equipped with a complete
descending $\mathbf{N}$-indexed filtration by sheaves, then its value on a
syntomic $k$-stack $\Xscr$ also admits a similar filtration. Applying this observation allows us to construct the following spectral sequences.
\begin{enumerate}
\item[(a)] 
Recall that for $R \in \mathrm{Syn}_k$, we have a complete descending $\NN$-indexed
multiplicative $S^1$-equivariant {\bf HKR filtration} $\Fil^\star_{\HKR}\HH(R/k)$ with graded
pieces
$\gr^t_\HKR\HH(R/k)\we\wedge^t L_{R/k}[t]$, obtained by left Kan extending
the Postnikov filtration on polynomial algebras.
The HKR filtration on $\HH(-/k)$ induces a complete descending $\mathbf{N}$-indexed
filtration of $\HH(\Xscr/k)$ with $\mathrm{gr}^t$ given by $R\Gamma(\Xscr, \wedge^t
L_{\Xscr/k})[t]$. In particular, for any syntomic $k$-stack $\Xscr$, we obtain the
{\bf HKR spectral sequence}
\[ E_2^{s,t}=H^s(\Xscr,\bigwedge^{-t} L_{\Xscr/k})\Rightarrow H^{s+t}(\HH(\Xscr/k)).\]
The HKR spectral sequence degenerates in characteristic zero
by~\cite{toen-vezzosi-simpliciales}.
\item[(b)] Assume $\Xscr$ is a smooth $k$-stack where $k$ has characteristic
    $p>0$. Restricting attention to smooth
$k$-algebras and applying the reasoning used above, in conjunction with the last
sentence of Definition~\ref{def:invariants}(e), shows that the de Rham
cohomology $R\Gamma_{\mathrm{dR}}(\Xscr/k)$ admits a complete descending $\mathbf{N}$-indexed filtration
$\mathrm{Fil}^{\star}_H R \Gamma_{\mathrm{dR}}( \Xscr/k)$
with $\mathrm{gr}^i$ given by $R\Gamma(\Xscr, \wedge^i L_{\Xscr/k})[-i]$. In particular, we obtain the {\bf Hodge--de Rham spectral sequence}
$$E_1^{s,t}=H^t(\Xscr,\bigwedge^s L_{\Xscr/k})\Rightarrow H^{s+t}_{\mathrm{dR}}(\Xscr/k).$$

\item[(b')] Assume $k$ is perfect of characteristic $p$. For any $k$-algebra $R$, the object
$L\Omega_{R/k}$ comes endowed with a functorial increasing exhaustive $\mathbf{N}$-indexed
filtration, called the conjugate filtration, with $\mathrm{gr}_i$ given by $\bigwedge^i
L_{R^{(1)}/k}[-i]$ (see \cite{bhatt-padic}). If one restricts attention to syntomic $k$-algebras, these graded pieces
are coconnective. As totalizations of cosimplicial coconnective objects commute with
filtered colimits, we learn by descent that for any syntomic stack $\mathcal{X}/k$, we have
a functorial increasing exhaustive $\mathbf{N}$-indexed filtration on
$R\Gamma_{\dR}(\mathcal{X}/k)$ with $\mathrm{gr}_i$ given by $R\Gamma(\mathcal{X}, \bigwedge^i
L_{\mathcal{X}^{(1)}/k})[-i]$. In particular, we obtain the {\bf conjugate spectral
sequence}
\[  E_2^{s,t} = H^s(\mathcal{X}, \bigwedge^t L_{\mathcal{X}^{(1)}/k}) \Rightarrow
H^{s+t}_{\dR}(\mathcal{X}/k).\]
\end{enumerate}

\begin{remark}
Comparing the $E_2$-terms of the conjugate spectral sequence with the $E_1$-terms of the
Hodge--de Rham spectral sequence shows the following: if both the Hodge and de Rham
cohomology groups of $\mathcal{X}/k$ are finite dimensional in each degree, then the
Hodge--de Rham spectral sequence degenerates if and only if the conjugate spectral sequence
degenerates.
\end{remark}

The preceding discussion also extends to $\mathbf{Z}$-indexed filtrations provided the
graded pieces become highly coconnective for $i \to -\infty$. By the main results of
\cite{BMS2}, this yields the following two spectral sequences.
\begin{enumerate}
\item[(c)] Assume $k$ has characteristic $p> 0$ and $\Xscr$ is a smooth $k$-stack. The motivic filtration on $\HP(-/k)$
constructed in \cite{BMS2} (in the $p$-complete setting) and in
general in \cite{antieau-derham}
induces a complete exhaustive descending $\mathbf{Z}$-indexed filtration on $\HP(\Xscr/k)$
with $\mathrm{gr}^i$ given by $R\Gamma_{\mathrm{dR}}(\Xscr/k)[2i]$. In particular, we obtain
the {\bf de Rham--$\HP$ spectral sequence}
\[ E_2^{s,t}=H^{s-t}_{\mathrm{dR}}(\Xscr/k) \Rightarrow H^{s+t}(\HP(\Xscr/k)).\]
There is a variant for $\HC^-(\Xscr/k)$: one has a 
complete exhaustive $\mathbf{Z}$-indexed descending 
filtration on 
$\HC^-(\Xscr/k)$ with $\gr^t$ given by 
$\mathrm{Fil}^t_H R \Gamma_{\mathrm{dR}}(\Xscr/k)[2t]$, and a similar spectral sequence. 

\item[(d)] Assume $k$ is a perfect ring of characteristic $p$ and
$\Xscr$ is a smooth $k$-stack. The motivic filtration on $\TP(-)$ (cf.
\cite{BMS2}) 
induces a complete exhaustive descending $\mathbf{Z}$-indexed filtration on $\TP(\Xscr)$
with $\mathrm{gr}^i$ given by $R\Gamma_\crys(\Xscr)[2i]$. In particular, we obtain the
crystalline version of the de Rham--$\HP$ spectral sequence, namely the {\bf
    crystalline--$\TP$ spectral sequence}
\[ E_2^{s,t}=H_\crys^{s-t}(\Xscr)\Rightarrow H^{s+t}(\TP(\Xscr;\ZZ_p));\]
the target is the $p$-completion of topological periodic cyclic homology. 

\item[(e)] For any syntomic $k$-stack $\Xscr$, we have the {\bf Tate spectral sequence}
	\[E_2^{s,t}=H^s_{\mathrm{Tate}}(BS^1,H^t(\HH(\Xscr/k)))\Rightarrow
    H^{s+t}((\HH(\Xscr/k))^{tS^1}).\]
    If $\Xscr$ is a syntomic $k$-scheme, 
    then we can identify 
    $\HH(\Xscr/k)^{tS^1}$ with $\HP( \Xscr/k)$. 
\end{enumerate}

\end{definition}

\newcommand{\qcoh}{\mathrm{QCoh}}
\newcommand{\md}{\mathrm{Mod}}
\renewcommand{\hom}{\mathrm{Hom}}

\begin{remark}
The smoothness assumptions were made in Definition~\ref{def:sss} to ensure that derived de
Rham cohomology is complete for the Hodge filtration. We could drop this assumption entirely
if we replace derived de Rham cohomology with its Hodge-completed variant (and derived
        crystalline cohomology with its Nygaard completed variant). However, since the
stacks that we shall encounter later are smooth, we prefer to stick to the limited
generality introduced above.
\end{remark}

We organize the spectral sequences introduced above in Figure~\ref{fig:quartet1}, borrowed from~\cite{antieau-bragg}.
\begin{figure}[H]
  \centering
  \begin{tikzcd}
    & H^*(\HH(\Xscr/k))\arrow[Rightarrow]{dr}{\text{Tate}}&\\
      H^*(\Xscr,\bigwedge^*L_{\Xscr/k})\arrow[Rightarrow]{ur}{\text{HKR}}\arrow[Rightarrow]{dr}[swap]{\text{Hodge--de
      Rham}}&&H^*(\HP(\Xscr/k))\\
          &H^*_{\mathrm{dR}}(\Xscr/k) \arrow[Rightarrow]{ur}[swap]{\text{de Rham--HP}}&
  \end{tikzcd}
  \caption{The Hodge quartet.}
  \label{fig:quartet1}
\end{figure}

\begin{remark}
    The Hodge--de Rham spectral sequence degenerates in characteristic zero for
	 smooth proper schemes by Hodge theory. Moreover, if $k$ is a perfect field
	 of characteristic $p$, and $X/k$ is a smooth proper $k$-scheme with
	 $\dim(X)\leq p$ that lifts to $W_2(k)$, then the Hodge--de Rham spectral
	 sequence degenerates by Deligne--Illusie~\cite{deligne-illusie}. Remarkably,
	 it is still unknown whether the hypothesis on dimension is necessary in the
	 preceding statement: could it be true that the Hodge--de Rham spectral
	 sequence degenerates for any smooth proper scheme over a perfect field $k$ of
	 characteristic $p$ which is liftable to $W_2(k)$ (or even $W(k)$)? This question was
     explicitly raised in \cite[Problem~7.10]{illusie-frobenius}, and Deligne--Illusie
     presumed that the answer is `no'.
\end{remark}

\begin{remark}[The non-commutative Tate spectral sequence]
In Definition~\ref{def:sss}, the first four spectral sequences crucially use
algebraic geometry. However, the Tate spectral sequence extends to the non-commutative
setting: for any $k$-linear stable $\infty$-category $\mathcal{C}$, there is 
a spectral sequence
\[E_2^{s,t}=H^s_{\mathrm{Tate}}(BS^1,H^t(\HH(\Cscr/k)))\Rightarrow H^{s+t}(\HP(\Cscr/k))\]
In this context, the Tate spectral sequence is also called the noncommutative
Hodge--de Rham spectral sequence. Let $k$ be a perfect field of characteristic
$p$. A result of Kaledin~\cite{kaledin-nhdr,kaledin-spectral} (see
also~\cite{mathew-degeneration}) implies that if $\Cscr$ is a smooth proper
$k$-linear stable $\infty$-category (such as $\Perf(X)$ where $X$ is a smooth proper
        $k$-scheme) such that $H^i(\HH(\Cscr/k))=0$ for $i\notin[-p,p]$ (the noncommutative
            analogue of $\dim(X)\leq p$) and if $\Cscr$ lifts to $W_2(k)$, then the Tate
        spectral sequence degenerates at $E_2$. Kaledin used this to prove that the Tate
        spectral sequence degenerates for smooth proper dg categories over characteristic
        zero fields, which together with HKR in characteristic zero implies Hodge--de Rham
        degeneration.
\end{remark}

\begin{remark}[A degeneration criterion]
    Suppose that $X$ is a smooth and proper variety over a perfect field $k$
	 of characteristic $p$. In this case, all of the
    $k$-vector spaces appearing in Figure~\ref{fig:quartet1} are
    finite dimensional. Thus, we can use dimension counts to make conclusions
    about degeneration of the spectral sequences. For example, if $\dim X\leq
    p$, then the HKR spectral sequence degenerates \cite{antieau-vezzosi}. This implies that
    Tate spectral sequence degenerates if and only if both the Hodge--de Rham and
    the de Rham--$\HP$ spectral sequences degenerate. In particular, if the
    Hodge--de Rham spectral sequence does not degenerate, then neither does the
    Tate spectral sequence. 
\end{remark}

The literature (see for example~\cite[Remarques~2.6(i)]{deligne-illusie}) provides many examples of smooth
proper surfaces where the Hodge--de Rham spectral sequence, and hence the
Tate spectral sequence, does not degenerate. In this paper, we provide examples that witness the
non-degeneration of the remaining spectral sequences, i.e., the HKR, de Rham--$\HP$, and
crystalline--$\TP$ spectral sequences.

\section{Classifying space counterexamples}

In  this section, we establish counterexamples to degeneration of the HKR
spectral sequence for certain algebraic stacks. Our examples are the
classifying stacks of finite flat group schemes. In fact, the group schemes
$\mu_p$, $\mu_p \times \mu_p$, and $\alpha_p$ already lead to the desired
counterexamples.\footnote{These examples exhibit qualitatively different behaviour: the group schemes $\mu_p$ or $\mu_p \times \mu_p$ lift to $W_2(k)$ compatibly with Frobenius and their Hodge-de Rham spectral sequences always degenerate (Proposition~\ref{Bmuss}), while the group scheme $\alpha_p$ does not lift to $W_2(k)$ and its Hodge-de Rham spectral sequence does not degenerate (Proposition~\ref{dRofHP} or Remark~\ref{ConjBalphap}). Nevertheless, either example leads to a non-degenerate HKR spectral sequence.} Later, we will approximate, in the sense of
Totaro~\cite{totaro-chow}, these classifying stacks by smooth projective
$k$-schemes to prove Theorem~\ref{thm:intro1}.

In this section, we work over a fixed perfect field $k$ of characteristic
$p>0$.  For a finite flat $k$-group scheme $G$, we will let $BG$ denote
the classifying stack of $G$-torsors for the $\fppf$-topology. Note that these stacks are always smooth (Example~\ref{synstacks}).

Our arguments begin with the calculation of the Hodge cohomology of $BG$, and are phrased in terms of the 
co-Lie complex 
$\mathrm{coLie}(G) \in D(BG)$ of \cite[Ch. VII, 3.1.2]{illusie-cotangent2}. This
is a $G$-equivariant refinement of $e^* L_{G/k}$ for $e\colon \Spec(k)\to G$ the
identity section, 
and can be identified the cotangent complex of
the stack $BG$ up to a shift. 
When $G$ is smooth, $\mathrm{coLie}( G)$ is the linear dual of the adjoint 
representation of $G$.  
The main technical tool is the following result.

\begin{theorem}[{Totaro, cf. \cite[Theorem 3.1]{totaro}}] 
There is a multiplicative, graded isomorphism, 
\begin{equation} \label{BGhodgecoh} R\Gamma\left( BG, \bigoplus_{i \geq 0} \bigwedge^i L_{BG/k} \right)
\simeq R\Gamma\left( G, \bigoplus_{i \geq 0}
\mathrm{Sym}^i(\mathrm{coLie}(G))[-i]\right),\end{equation}
in $D(k)$,
 where the right side denotes ``rational cohomology'' of $G$-representations. 
\end{theorem}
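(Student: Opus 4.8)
The plan is to break the statement into three ingredients — the shape of the cotangent complex of $BG$, a décalage identity for derived exterior powers, and fppf descent along the tautological atlas $e\colon\Spec(k)\to BG$ — and then reassemble them compatibly with the multiplicative structures. First I would pin down $L_{BG/k}$: applying the transitivity triangle to $\Spec(k)\xrightarrow{e}BG\to\Spec(k)$ and using that $e$ is a $G$-torsor (so that $L_{\Spec(k)/BG}$, after base change to $G=\Spec(k)\times_{BG}\Spec(k)$ along the action map, is $G$-equivariantly identified with $e^*L_{G/k}$, i.e.\ with $\mathrm{coLie}(G)$) yields a $G$-equivariant equivalence $L_{BG/k}\simeq\mathrm{coLie}(G)[-1]$ in $D(BG)$ — this is \cite[Ch.~VII]{illusie-cotangent2} and is the ``up to a shift'' identification already alluded to above.

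Next I would compute the derived exterior powers. Since $L_{G/k}$ has Tor-amplitude in $[-1,0]$, I would represent $L_{BG/k}\simeq\mathrm{coLie}(G)[-1]$ by a two-term complex of flat quasi-coherent sheaves $[E^0\xrightarrow{d}E^1]$ on $BG$ placed in cohomological degrees $0$ and $1$ (for $G$ smooth this is just $\mathfrak{g}^\vee$ in degree $1$). The standard formula for the derived exterior power of such a degree-$[0,1]$ complex — a consequence of Illusie's décalage — identifies $\bigwedge^iL_{BG/k}$ with the complex $\big[\bigwedge^iE^0\to\bigwedge^{i-1}E^0\otimes E^1\to\cdots\to E^0\otimes\mathrm{Sym}^{i-1}E^1\to\mathrm{Sym}^iE^1\big]$ in degrees $0,\dots,i$, which is precisely $\mathrm{Sym}^i(\mathrm{coLie}(G))[-i]$. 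Assembling over all $i$ and tracking the products $\bigwedge^i\otimes\bigwedge^j\to\bigwedge^{i+j}$ against $\mathrm{Sym}^i\otimes\mathrm{Sym}^j\to\mathrm{Sym}^{i+j}$ (the Koszul-sign discrepancy being absorbed by the shifts), I would obtain an equivalence of graded-commutative algebra objects $\bigoplus_{i\geq 0}\bigwedge^iL_{BG/k}\simeq\bigoplus_{i\geq 0}\mathrm{Sym}^i(\mathrm{coLie}(G))[-i]$ in $D(BG)$.

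Finally I would apply $R\Gamma(BG,-)$. The atlas $e$ is a syntomic cover whose \v{C}ech nerve is the bar construction on $G$, so for any $\mathcal{F}\in D(BG)$ — equivalently, a complex of rational $G$-representations — the associated cobar complex $\mathcal{F}(\Spec k)\to\mathcal{F}(G)\to\mathcal{F}(G\times G)\to\cdots$ computes $R\Gamma(G,\mathcal{F})$, the rational cohomology of $G$ with coefficients in $\mathcal{F}$. Feeding the equivalence of the previous paragraph into this, and using that $R\Gamma$ is lax symmetric monoidal (so that cup products on both sides match up), gives the desired multiplicative graded isomorphism \eqref{BGhodgecoh}.

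The step I expect to demand the most care is the exterior-power computation when $G$ is not smooth: there $\mathrm{coLie}(G)$ is a genuine two-term complex and $L_{BG/k}$ is not connective, so one must ensure that the décalage/``Illusie formula'' is being invoked in the correct Kan-extended sense, that the flat model $[E^0\to E^1]$ can be chosen $G$-equivariant, and — crucially — that the resulting identification $\bigwedge^iL_{BG/k}\simeq\mathrm{Sym}^i(\mathrm{coLie}(G))[-i]$ is multiplicative, so that \eqref{BGhodgecoh} is an isomorphism of algebras and not merely of graded objects. When $G$ is smooth, $\mathrm{coLie}(G)=\mathfrak{g}^\vee$ is a vector bundle and $\mathrm{Sym}^i$, $\bigwedge^i$ are the classical operations, and everything is routine.
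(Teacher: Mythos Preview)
Your proposal is correct and follows essentially the same route as the paper's own explanation: the paper records that the result comes from the $G$-equivariant multiplicative identification $\pi^*\bigl(\bigoplus_{i\geq 0}\bigwedge^i L_{BG/k}\bigr)\simeq\bigoplus_{i\geq 0}\mathrm{Sym}^i(\mathrm{coLie}(G))[-i]$ for the atlas $\pi\colon\Spec(k)\to BG$, which is exactly your combination of the shift $L_{BG/k}\simeq\mathrm{coLie}(G)[-1]$ with d\'ecalage, followed by taking $R\Gamma(BG,-)$. Your write-up simply spells out in more detail the steps the paper compresses into a one-line sketch (and correctly flags the non-smooth case and multiplicativity as the places requiring care).
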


The result comes from the fact (which to formulate we use the cotangent
complex of stacks) that if $\pi\colon\mathrm{Spec}(k) \to B G$ denotes the tautological map, then there is a multiplicative, graded, $G$-equivariant isomorphism 
\[ \pi^* \left(\bigoplus_{i \geq 0} \bigwedge^i L_{B G/k}\right) \simeq
\bigoplus_{i \geq 0} \mathrm{Sym}^i(\mathrm{coLie}(G))[-i],\]
which implies
the isomorphism \eqref{BGhodgecoh}
in $D(k)$.

In our examples, the basic source for the non-degeneration is the following. 
The Frobenius on the classifying stacks $B \mu_p, B \alpha_p$ factors through a
point, which forces the Frobenius to act by zero on the zeroth Hochschild
homology. However, the $p$th power map is not zero in the relevant Hodge cohomology
(in fact, the $p$th power map does not agree with the map induced by
the Frobenius), which forces
the existence of differentials in the HKR spectral sequence. 
Another phenomenon that leads to non-degeneration of the Hodge-to-de Rham
spectral sequence for $B \alpha_p$ is 
its failure to lift to $W_2(k)$, which leads to the non-degeneration of the
conjugate spectral sequence. 

\subsection{$B\mu_p$}\label{sec:bmup}

In this section, we analyze the spectral sequences for $B\mu_p$. Let us begin
with a (special case of much more general) degeneration criterion for the
Hodge--de Rham spectral sequence.

\begin{lemma}
\label{BmuHdRLift}
Let $\Xscr = BG$ for a diagonalizable group scheme $G$ (such as $\mu_p$
or $\mathbb{G}_m$). Then the conjugate and Hodge--de Rham spectral sequences
for $\Xscr$ degenerate.
\end{lemma}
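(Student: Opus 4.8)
The plan is to reduce the Hodge--de Rham statement to the conjugate one and then prove the latter by a bidegree count after a Künneth decomposition. First, since a diagonalizable $G$ is linearly reductive, for any $G$-representation $V$ (or bounded complex of such) one has $R\Gamma(G,V)=V^{G}$ with the invariants functor exact; together with Totaro's isomorphism \eqref{BGhodgecoh} this makes the Hodge cohomology groups $H^{s}(BG,\bigwedge^{t}L_{BG/k})$ finite-dimensional in each degree, and, since the conjugate filtration has only finitely many graded pieces contributing to any fixed total degree, the de Rham cohomology groups $H^{n}_{\dR}(BG/k)$ are finite-dimensional as well. By the Remark in \S\ref{sec:ss} comparing the $E_{2}$-page of the conjugate spectral sequence with the $E_{1}$-page of the Hodge--de Rham spectral sequence, it therefore suffices to show the conjugate spectral sequence of $BG$ degenerates at $E_{2}$.

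To do this I would first decompose: writing $G=D(M)$, the structure of $M$ gives $G\cong\Gm^{r}\times\prod_{i}\mu_{p^{a_{i}}}\times\prod_{j}\mu_{m_{j}}$ with $p\nmid m_{j}$, hence $BG\cong(B\Gm)^{r}\times\prod_{i}B\mu_{p^{a_{i}}}\times\prod_{j}B\mu_{m_{j}}$. Each $B\mu_{m_{j}}$ is étale, so $L_{B\mu_{m_{j}}/k}=0$ and the factor is cohomologically trivial. For the remaining factors the cotangent complex follows from the fiber sequence $B\mu_{p^{a}}\to B\Gm\xrightarrow{[p^{a}]}B\Gm$ together with $L_{B\Gm/k}\simeq\mathcal{O}[-1]$ (valid because $\mathrm{coLie}(\Gm)=\mathfrak{g}^{\vee}$ is the trivial line): the derivative of $[p^{a}]$ vanishes in characteristic $p$, so the induced map on cotangent complexes is zero and $L_{B\mu_{p^{a}}/k}\simeq\mathcal{O}\oplus\mathcal{O}[-1]$, with trivial equivariant structure in both cases. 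Taking derived exterior powers (via the décalage identity $\bigwedge^{b}(\mathcal{O}[-1])\simeq\mathcal{O}[-b]$) and using linear reductivity, one finds $H^{s}(B\Gm,\bigwedge^{t}L)$ supported on the diagonal $s=t$, and $H^{s}(B\mu_{p^{a}},\bigwedge^{t}L)$ supported on $t-s\in\{0,1\}$.

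Since $BG^{(1)}\cong BG$ (as $D(M)$ is defined over $\mathbf{F}_{p}$), the $E_{2}$-page of the conjugate spectral sequence of each basic factor is $H^{s}(\text{factor},\bigwedge^{t}L)$, hence concentrated within distance one of the diagonal $t=s$. As the differential $d_{r}$ has bidegree $(r,1-r)$, it sends a bidegree with $t-s\in\{0,1\}$ to one with $(t-s)+1-2r<0$ as soon as $r\ge 2$; thus every differential vanishes and the conjugate spectral sequence degenerates at $E_{2}$ for each factor. Because derived de Rham cohomology is symmetric monoidal with the conjugate filtration compatible with tensor products, the conjugate spectral sequence of the product $BG$ is the graded tensor product of those of its factors (the spectral sequence of a tensor product of filtered complexes over the field $k$ being the tensor product of the two spectral sequences), so it too degenerates at $E_{2}$. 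Feeding this back through the first step yields degeneration of the Hodge--de Rham spectral sequence.

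The step most in need of care is this Künneth formula for the conjugate spectral sequence: one must check the conjugate filtration on $L\Omega_{-/k}$ is monoidal and that, over $k$, the $E_{r}$-page of a tensor product of filtered complexes is the tensor product of the $E_{r}$-pages, so that $E_{2}$-degeneration is preserved under products. A cleaner, case-free alternative is to note that $D(M)$ is defined over $\mathbf{Z}$, so $BG$ lifts to $W_{2}(k)$ together with a lift of its relative Frobenius (which on $D(M)$ is multiplication by $p$ on $M$); a smooth $k$-stack admitting such a compatible lift has degenerate conjugate spectral sequence by the Deligne--Illusie method, where a genuine global Frobenius lift $\widetilde{F}$ lets one form $\tfrac{1}{p}\,d\widetilde{F}$ and wedge it up to split the conjugate filtration in all degrees simultaneously, sidestepping the usual hypothesis $\dim<p$.
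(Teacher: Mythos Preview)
Your proposal is correct. In fact, the paper's proof is precisely the ``cleaner, case-free alternative'' you sketch at the end: it observes that the \v{C}ech nerve of $\Spec(k)\to BG$ is a simplicial syntomic affine $k$-scheme (the powers of $G$) that lifts to $W_2(k)$ together with compatible Frobenius lifts, invokes \cite[Remark~2.2(ii)]{deligne-illusie} to obtain a splitting of the conjugate filtration that is \emph{functorial} in the lifting data, and then totalizes to split the conjugate filtration on $R\Gamma_{\dR}(BG/k)$ itself---so in particular the conjugate spectral sequence degenerates, and a dimension count gives Hodge--de Rham degeneration.

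Compared with your main argument, the paper's route avoids both the structure theorem for the character group $M$ and the K\"unneth statement for the conjugate spectral sequence that you rightly flag as the step needing care (even over a field one must check that the conjugate filtration is monoidal as a filtered object and that this survives totalization for stacks; note also that the naive bidegree argument breaks down for the product, since tensoring $N$ factors supported on $t-s\in\{0,1\}$ lands you on $t-s\in\{0,\dots,N\}$, so you really do need the Leibniz rule on each $E_r$-page). Your decompositional approach, on the other hand, has the virtue of making the bidegree support of the $E_2$-page explicit for each basic factor, which is exactly the information exploited later in Example~\ref{BGmHdR} and Proposition~\ref{HdRBmup}. The paper also gets a slightly stronger conclusion for free: the conjugate filtration actually \emph{splits}, not merely degenerates.
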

\begin{proof}
    Using \cite[Remark 2.2 (ii)]{deligne-illusie} as well as functorial
    simplicial resolutions, one shows that for any syntomic $k$-algebra $R$
    equipped with a lift $\tilde{R}$ to $W_2(k)$ together with a lift
    $\tilde{\phi}\colon \tilde{R} \to \tilde{R}$ of the Frobenius, there is a
    functorial (in the pair $(\tilde{R},\tilde{\phi})$)  isomorphism $\oplus_{i
    \geq 0} \wedge^i L_{R^{(1)}/k}[-i] \simeq L\Omega_{R/k}$, i.e., the
    conjugate filtration splits functorially in the lifting data. Now the
    syntomic hypercover of $\Xscr = BG$ given by the \v{C}ech nerve of
    $\mathrm{Spec}(k) \to BG$ is a simplicial syntomic affine $k$-scheme that
    comes equipped with such lifting data compatibly with the simplicial
    structure maps. Applying the preceding splitting levelwise and totalizing
    gives an isomorphism $R\Gamma_{\mathrm{dR}}(BG/k) \simeq \oplus_{i \geq 0}
    R\Gamma(BG, \wedge^i L_{BG/k}[-i])$, i.e., the conjugate filtration
    splits, and thus the conjugate spectral sequence degenerates. Dimension
    considerations now show that the Hodge--de Rham spectral sequence must also
    degenerate.
\end{proof}

This allows us to recover the following standard calculation. As usual, let
$P(c)$ denote a polynomial ring over $k$ on the generator $c$ and let $E(d)$
denote an exterior algebra over $k$ on the generator $d$.

\begin{example}[Cohomology of $B \mathbb{G}_m$]
\label{BGmHdR}
We have that $\mathrm{coLie}( \mathbb{G}_m) $ is the trivial representation of
$\mathbb{G}_m$. 
As $R\Gamma(B \mathbb{G}_m, \mathcal{O}) \simeq k$, it follows that
$H^*(B\mathbb{G}_m, \wedge^{\ast} L_{B \mathbb{G}_m/k}) \simeq P(c)$, where
$c$ has bidegree $(1, 1)$. By Lemma~\ref{BmuHdRLift}, we also have
$H^*_{\mathrm{dR}}(B\mathbb{G}_m) \simeq P(c)$. The generator $c$ can be
explicitly chosen as the first Chern class of the tautological line bundle on
$B\mathbb{G}_m$.
The HKR spectral sequence for $H^*(\HH( B\mathbb{G}_m/k))$ degenerates and we find that
$H^*(\HH(B \mathbb{G}_m/k))\simeq k[[u]]$ for a class $u$ in degree zero. 
In fact, all the spectral sequences considered above degenerate for $B
\mathbb{G}_m$ as there is no room for differentials. 
\end{example}

Let us first review the Hodge and de Rham cohomologies of $B\mu_p$. 
Part (i) of the following proposition is a consequence of the fact that
$\mathrm{coLie}(\mu_p) \simeq \mathcal{O} \oplus \mathcal{O}[1] \in D(\mu_p)$, which 
yields the calculation of Hodge cohomology. Part (ii) follows from the fact that
the group scheme $\mu_p$ lifts to characteristic zero with a lift of Frobenius.

\begin{proposition}[{Cf. \cite[Proposition~10.1]{totaro}}]
\label{HdRBmup}
    \begin{enumerate}
        \item[{\rm (i)}] The Hodge cohomology of $B\mu_p$ is given by
            $$\H^*(B\mu_p,\bigwedge^* L_{B\mu_p/k})\iso E(d)\otimes P(c),$$
            where $d \in H^0(\B\mu_p, L_{B\mu_p/k})$ and $c \in H^1(B\mu_p, L_{B\mu_p/k})$.
        \item[{\rm (ii)}] The Hodge--de Rham and conjugate spectral sequences degenerate for
            $B\mu_p$ and we have an isomorphism
            $$H^*_{\mathrm{dR}}(B\mu_p/k)\iso E(d)\otimes P(c),$$ where
            $|d|=1$ and $|c|=2$.
    \end{enumerate}
\end{proposition}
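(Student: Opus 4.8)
The plan is to deduce both parts from the Totaro comparison isomorphism \eqref{BGhodgecoh} together with standard facts about the cohomology of $\mu_p$-representations. First I would establish the identification $\mathrm{coLie}(\mu_p) \simeq \Oscr \oplus \Oscr[1] \in D(\mu_p)$. The point is that $\mu_p = \Spec k[t]/(t^p-1)$ is a complete intersection, so its cotangent complex $L_{\mu_p/k}$ is perfect with Tor-amplitude in $[-1,0]$; concretely, presenting $k[t]/(t^p-1)$ as $k[t]$ modulo $(t^p-1)$ gives $L_{\mu_p/k} \simeq [(t^p-1)/(t^p-1)^2 \to \Omega^1_{k[t]/k}\otimes -]$, and since $p=0$ in $k$ the differential $d(t^p-1) = p t^{p-1}dt = 0$, so the complex has zero differential and $L_{\mu_p/k} \simeq \Oscr_{\mu_p} \oplus \Oscr_{\mu_p}[1]$. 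Pulling back along the identity section $e$ and keeping track of the $\mu_p$-equivariant (equivalently, weight/grading) structure, one sees the two summands sit in the appropriate weights; this is where I would cite \cite[Ch. VII, 3.1.2]{illusie-cotangent2} for the equivariant refinement. Then $\Sym^i(\mathrm{coLie}(\mu_p)) = \Sym^i(\Oscr \oplus \Oscr[1])$ decomposes via the binomial/Koszul rule as a sum of shifts of $\Oscr$.

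Next I would feed this into \eqref{BGhodgecoh}: $R\Gamma(B\mu_p, \bigoplus_i \wedge^i L_{B\mu_p/k}) \simeq R\Gamma(\mu_p, \bigoplus_i \Sym^i(\mathrm{coLie}(\mu_p))[-i])$, the right side being rational cohomology of $\mu_p$-representations. The key input is the computation of $H^*(\mu_p, -)$, i.e.\ the cohomology of the Hopf algebra $k[\mu_p]$, which is classical: $H^*(\mu_p, k) = E(d) \otimes P(c)$ with $d$ in cohomological degree $1$ and weight... — more precisely, $\mathrm{Ext}^*_{k[\mu_p]}(k,k)$ is this tensor product, and the trivial representation contributes exactly this. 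Combining the decomposition of $\Sym^i(\mathrm{coLie}(\mu_p))$ into weight pieces with the known $\mu_p$-cohomology of each weight-$n$ one-dimensional representation, and carefully bookkeeping the internal grading against the $[-i]$ shifts so that the Hodge bidegrees come out as claimed ($d$ of bidegree $(0,1)$ and $c$ of bidegree $(1,1)$), yields part (i). The multiplicativity in \eqref{BGhodgecoh} ensures the isomorphism is one of graded rings.

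For part (ii), the degeneration of the Hodge--de Rham and conjugate spectral sequences is exactly the content of Lemma~\ref{BmuHdRLift}, since $\mu_p$ is diagonalizable. Given degeneration, $H^*_{\dR}(B\mu_p/k)$ is the associated graded of the Hodge filtration, which as a vector space is $\bigoplus_{s,t} H^s(B\mu_p, \wedge^t L)$ reindexed so that a class of Hodge bidegree $(s,t)$ lands in de Rham degree $s+t$; this sends $d$ to degree $1$ and $c$ to degree $2$, giving the additive statement. To upgrade to a ring isomorphism I would invoke the multiplicativity of the Hodge filtration on de Rham cohomology together with the fact that $\mu_p$ lifts to $\mathbb{G}_{m,\ZZ}$-cotorsion... more simply, since $\mu_p$ lifts to characteristic zero \emph{with} a lift of Frobenius (as noted before the proposition), the conjugate filtration splits multiplicatively, pinning down the ring structure; alternatively one can note there is no room for multiplicative extension problems in these low degrees.

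The main obstacle I expect is the bookkeeping in part (i): correctly matching the $\mu_p$-equivariant weights on the two summands of $\mathrm{coLie}(\mu_p)$, tracking how $\Sym^i$ of a weight-graded complex decomposes, and then pairing each weight piece with the correct line in $H^*(\mu_p,-)$ so that the final bidegrees of $c$ and $d$ come out as stated rather than off by a shift. Everything else is either a citation (Totaro's theorem, Lemma~\ref{BmuHdRLift}) or a routine consequence of spectral sequence degeneration.
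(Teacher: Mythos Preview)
Your approach to part (ii) is correct and matches the paper: this is exactly Lemma~\ref{BmuHdRLift}. However, there is a genuine error in your argument for part (i). You claim that the rational cohomology $H^*(\mu_p, k)$ equals $E(d) \otimes P(c)$, but this is false. The group scheme $\mu_p$ is diagonalizable, hence linearly reductive, so its category of representations is semisimple and rational cohomology is concentrated in degree $0$: one has $R\Gamma(\mu_p, V) \simeq V^{\mu_p}$ for every representation $V$, and in particular $R\Gamma(\mu_p, k) = k$. (You may be thinking of the Cartier dual $\mathbf{Z}/p$, or of $\alpha_p$; compare the computation of $H^*(B\alpha_p,\mathcal{O})$ in Proposition~\ref{HodgeBalphap}, which genuinely uses nontrivial Ext groups.)

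The entire $E(d) \otimes P(c)$ structure therefore has to come from the \emph{other} ingredient in \eqref{BGhodgecoh}, namely the derived symmetric powers of $\mathrm{coLie}(\mu_p) \simeq \mathcal{O} \oplus \mathcal{O}[1]$, and this is what the paper uses. By d\'ecalage one has $\mathrm{Sym}^n(\mathcal{O}[1]) \simeq (\wedge^n \mathcal{O})[n]$, which vanishes for $n \geq 2$; thus $\bigoplus_{i \geq 0} \mathrm{Sym}^i(\mathrm{coLie}(\mu_p))[-i]$ is already $E(d) \otimes P(c)$ as a bigraded object of $D(B\mu_p)$ sitting in cohomological degree $0$, with $d$ arising from the $\mathcal{O}[1]$ summand (giving $d \in H^0(B\mu_p, L_{B\mu_p/k})$) and $c$ from the $\mathcal{O}$ summand (giving $c \in H^1(B\mu_p, L_{B\mu_p/k})$). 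Applying \eqref{BGhodgecoh} together with the triviality of higher $\mu_p$-cohomology then yields part (i). If you ran your proposal literally, you would tensor the correct answer with an extra, spurious copy of $E \otimes P$ coming from the nonexistent higher $\mu_p$-cohomology, and obtain something strictly too large.
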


\begin{proposition}
\label{HHBmup}
The Hochschild homology ring $H^*(\HH(B\mu_p/k))$ is isomorphic to $k[c]/(c^p)$ where $|c|=0$. In particular, it is concentrated in degree $0$.
\end{proposition}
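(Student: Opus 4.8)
Here is a plan for proving the stated computation $H^*(\HH(B\mu_p/k))\cong k[c]/(c^p)$, $|c|=0$.

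The plan is to compute $\HH(B\mu_p/k)$ directly from the descent definition of \S\ref{sec:invariants} and to recognize the answer as the ring of global functions on the loop stack of $B\mu_p$. The \v{C}ech nerve of the syntomic cover $\Spec(k)\to B\mu_p$ is the bar construction $[n]\mapsto\mu_p^{\times n}$ of the group $\mu_p$, so Theorem~\ref{BMS2descent} together with the base-change isomorphism $\HH((R\otimes_kS)/k)\simeq\HH(R/k)\otimes_k\HH(S/k)$ gives
\[
\HH(B\mu_p/k)\;\simeq\;\Tot\Bigl([n]\mapsto\HH(\mathcal{O}(\mu_p)/k)^{\otimes_kn}\Bigr),
\]
where the cosimplicial structure is obtained by applying $\HH(-/k)$ to the bar construction of the Hopf algebra $\mathcal{O}(\mu_p)=k[x]/(x^p-1)$. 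Since $\HH(-/k)$ computes functions on the self-intersection of the diagonal, this totalization is $R\Gamma$ of (the completion along the constant loops of) the loop stack $\mathcal{L}(B\mu_p)=B\mu_p\times_{B\mu_p\times_kB\mu_p}B\mu_p$; this completion is exactly what turns $k[t^{\pm1}]=\mathcal{O}(\Gm)$ into $k\llbracket t-1\rrbracket$ in the $B\Gm$ computation recorded in \S\ref{sec:invariants}.

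Next I would identify this stack. As $\mu_p$ is a commutative group scheme, $\mathcal{L}(B\mu_p)\simeq\Omega(B\mu_p)\times B\mu_p=\mu_p\times B\mu_p$; and — this is the point that distinguishes $B\mu_p$ from $B\Gm$ — since $\mu_p$ is infinitesimal it equals its own formal completion at the identity, so the completion along the constant loops is the identity here. Hence
\[
\HH(B\mu_p/k)\;\simeq\;R\Gamma(\mu_p\times B\mu_p,\mathcal{O})\;\simeq\;\mathcal{O}(\mu_p)\otimes_kR\Gamma(B\mu_p,\mathcal{O}).
\]
Since $\mu_p$ is diagonalizable, hence linearly reductive, $R\Gamma(B\mu_p,\mathcal{O})\simeq k$ concentrated in degree $0$ (equivalently, this is the weight-$0$ part of Proposition~\ref{HdRBmup}(i)). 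Therefore $\HH(B\mu_p/k)\simeq\mathcal{O}(\mu_p)=k[x]/(x^p-1)$, concentrated in degree $0$ and with its evident commutative ring structure; as $\mathrm{char}(k)=p$ we have $x^p-1=(x-1)^p$, so setting $c:=x-1$ exhibits the ring as $k[c]/(c^p)$ with $|c|=0$.

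I expect the main obstacle to be making the first two steps precise: identifying the descent totalization with $R\Gamma$ of the (completed) loop stack, and verifying that — unlike for $B\Gm$ — the completion along the constant loops does nothing, precisely because $\mu_p$ is infinitesimal. One can also argue entirely at the level of the cosimplicial object, without loop stacks: writing $\HH(\mathcal{O}(\mu_p)/k)\simeq\mathcal{O}(\mu_p)\otimes_k\bigl(k\otimes^L_{\mathcal{O}(\mu_p)}k\bigr)$ as a coalgebra object in $D(k)$ (using that $\mu_p$ is a commutative group scheme), the totalization decomposes as a derived tensor product of the cobar complex of the Hopf algebra $\mathcal{O}(\mu_p)$ — whose totalization is $R\Gamma(B\mu_p,\mathcal{O})\simeq k$ by linear reductivity — with the cobar complex of $k\otimes^L_{\mathcal{O}(\mu_p)}k$ — whose totalization is $\mathcal{O}(\mu_p)$ itself, since the Artinian local ring $\mathcal{O}(\mu_p)$ is complete. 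Either way one obtains $\HH(B\mu_p/k)\simeq k[c]/(c^p)$, which in particular is concentrated in degree $0$; combined with Proposition~\ref{HdRBmup}(i) this will then be exactly the input needed to force the HKR differential $d_p$ to be nonzero.
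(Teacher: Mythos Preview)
Your proof is correct, and it takes a genuinely different route from the paper's.

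The paper runs the HKR spectral sequence directly. It observes that $c$ is a permanent cycle for degree reasons, then uses that the Frobenius on $B\mu_p$ factors through a point to conclude $c^p=0$ in $H^0(\HH(B\mu_p/k))$ (since Frobenius acts as the $p$-th power map there); a comparison with the formal quotient $\HH/\Fil^p_{\HKR}$ then pins down $d_p(d)=c^p$ up to units, and the $E_{p+1}$-page collapses to $k[c]/(c^p)$. Your approach instead computes the abutment first, identifying $\HH(B\mu_p/k)$ with global functions on the formal loop stack and exploiting that $\mu_p$ is commutative, linearly reductive, and infinitesimal to get $\mathcal{O}(\mu_p)\cong k[c]/(c^p)$ in degree $0$.

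Two comments on what each approach buys. The paper's Frobenius argument is entirely self-contained within the paper's setup and directly exhibits the nonzero $d_p$ differential, which is precisely the piece of data that gets pulled back to a smooth projective variety in Theorem~\ref{thm:intro1}. Your approach is more conceptual and explains \emph{why} the answer is $\mathcal{O}(\mu_p)$, but the step you flag as the obstacle---identifying the descent-defined $\HH$ with functions on the loop stack completed along constant loops---is real: it is a theorem (in the spirit of Ben-Zvi--Nadler or To\"en) rather than a formality, and the paper does not develop this. Your alternative cosimplicial argument via bar--cobar duality for the Artinian local Hopf algebra $\mathcal{O}(\mu_p)$ is a good way to make this precise without invoking loop stacks, and indeed works.

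Finally, your closing remark that the abutment plus Proposition~\ref{HdRBmup}(i) forces $d_p\neq 0$ is correct, but deserves one more sentence: the $E_2$-page is supported on the two diagonals $s+t\in\{0,-1\}$, all $c^i$ are permanent cycles, and multiplicativity reduces everything to the single question of the first $r$ with $d_r(d)\neq 0$; a dimension count against your $p$-dimensional abutment then forces $r=p$. Since it is the \emph{differential} (not just the abutment) that transfers along the approximation $X\to B\mu_p$, it is worth making this deduction explicit.
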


\begin{proof}
    We use the HKR spectral sequence to calculate $H^*(\HH(B\mu_p/k))$. Its $E_2$-page is
 calculated using Proposition~\ref{HdRBmup}.  Using the notation there, one
 sees that for degree reasons, the class $c\in\H^1(B\mu_p,L_{\B\mu_p/k})$
 must be permanent. Thus, it defines a nonzero class of $H^0(\HH(B\mu_p/k))$
 that we also call $c$. Note that $c \in H^0(\HH(B\mu_p/k))$ is annihilated by
 the pullback along the tautological map $\mathrm{Spec}(k) \to B\mu_p$. On the
 other hand, the Frobenius $\varphi$ on $B\mu_p$ factors through this map, so
 we see that $\varphi(c)=0$ in $H^0(\HH(B\mu_p/k))$. But $\varphi$ acts on
 $H^0(\HH(B\mu_p/k))$ by the $p$-power map,\footnote{Given an $\mathbf{F}_p$-algebra $R$, the
 endomorphism of $\HH(R/\mathbf{F}_p)$ induced by the Frobenius on $R$
 coincides with the Frobenius endomorphism of the simplicial commutative
 $\mathbf{F}_p$-algebra $\HH(R/\mathbf{F}_p)$. Applying this observation to a
 hypercover shows that for any algebraic stack $\Xscr/\mathbf{F}_p$, the
 endomorphism of $H^0(\HH(\Xscr/\mathbf{F}_p))$ induced by the Frobenius on
 $\Xscr$ is the $p$-power map.} so multiplicativity of the HKR spectral
 sequence shows that $\varphi(c)$ is represented by
 $c^p\in H^p(B\mu_p,\bigwedge^p L_{B\mu_p/k})$, whence the latter must be a
 boundary in the HKR spectral sequence. To proceed further, observe that we
 have a filtered map $\HH(B\mu_p/k) \to \HH(B\mu_p/k)/\Fil^p_{\HKR}$
 as well as a formality isomorphism $\HH(B\mu_p/k)/\Fil^p_{\HKR} \simeq
 \oplus_{i \leq p-1} R\Gamma(B\mu_p, \wedge^i L_{B\mu_p/k}[i])$ in $DF(k)$. By
 contemplating the induced map on spectral sequences, we learn that the only
 differential that can possibly hit $c^p \in
 H^p(B\mu_p,\bigwedge^p L_{B\mu_p/k})$ in the HKR spectral sequence of
 $B\mu_p$ is $d_p(d)$ (up to a unit). This completes the proof as now on the
 $E_{p+1}$-page of the spectral sequence we are left only with the nonzero
 classes $1,c,\ldots,c^{p-1}$.
\end{proof}

\begin{theorem} 
\label{Bmuss}
For $\B\mu_p$, the following assertions hold.
\begin{enumerate}
    \item[{\rm (1)}] The HKR spectral sequence does not degenerate.
There is a nonzero differential 
$d_{p}\colon H^0( B\mu_p, L_{B \mu_p/k}) \to H^{p}( B \mu_p, \bigwedge^p L_{B
\mu_p/k})  $. 
\item[{\rm (2)}] The Hodge--de Rham and conjugate spectral sequences degenerate.
\item[{\rm (3)}] The Tate spectral sequence for $\HH(B\mu_p/k)^{tS^1}$ degenerates.
\item[{\rm (4)}] The de Rham--$\HP$ spectral sequence does not degenerate. There is a
nonzero differential 
$d_{p}\colon H^1_{\mathrm{dR}}( B\mu_p/k) \to H^{2p}_{\mathrm{dR}}( B \mu_p/k)$. 
\item[{\rm (5)}] The crystalline--$\TP$ spectral sequence degenerates, but the
    resulting filtration on $H^*(\TP(\B\mu_p))$ is not split.
\end{enumerate}
\end{theorem}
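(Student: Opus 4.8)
The plan is to prove each of the five assertions of Theorem~\ref{Bmuss} by combining the three explicit calculations already in hand — the Hodge cohomology $E(d)\otimes P(c)$ of Proposition~\ref{HdRBmup}(i), the de Rham cohomology $E(d)\otimes P(c)$ of Proposition~\ref{HdRBmup}(ii), and the Hochschild homology $k[c]/(c^p)$ of Proposition~\ref{HHBmup} — together with the machinery of the motivic/HKR filtrations and the Hodge quartet of Figure~\ref{fig:quartet1}. The guiding principle is dimension counting: once the $E_2$-pages and the abutments are known, the (non-)degeneration of each spectral sequence is forced.

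For (1), I would argue that since the $E_2$-page of the HKR spectral sequence is $E(d)\otimes P(c)$ with $|c|=(1,-1)$, $|d|=(0,0)$ while the abutment $H^*(\HH(B\mu_p/k))=k[c]/(c^p)$ is concentrated in degree zero and one-dimensional in each degree $0,\dots,p-1$, many classes must die; the proof of Proposition~\ref{HHBmup} already isolated the differential: $d_p(d)$ is, up to a unit, $c^p\in H^p(B\mu_p,\wedge^p L_{B\mu_p/k})$, and this is nonzero precisely because $c^p$ is nonzero in Hodge cohomology. So (1) is essentially a restatement of the proof of Proposition~\ref{HHBmup}. Part (2) is exactly Proposition~\ref{HdRBmup}(ii) (equivalently Lemma~\ref{BmuHdRLift}). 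Part (4) then follows from the degeneration criterion in the remarks: since $\dim$-type bookkeeping shows the Tate spectral sequence degenerates iff both Hodge--de Rham and de Rham--$\HP$ do, and we will establish (3) independently, non-degeneration of de Rham--$\HP$ is forced as soon as we pin down the relevant differential. Concretely, the de Rham--$\HP$ spectral sequence has $E_2^{s,t}=H^{s-t}_{\dR}(B\mu_p/k)$; using multiplicativity and the $S^1$-equivariant structure — the motivic filtration on $\HP$ refines the HKR filtration on $\HH$ — the differential $d_p(d)$ from (1) propagates to a nonzero $d_p\colon H^1_{\dR}\to H^{2p}_{\dR}$, the shift by $2p$ reflecting the $[2i]$ twist in $\gr^i\HP=R\Gamma_{\dR}[2i]$.

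For (3) and (5): by Warning (``$\HP$ versus the Tate construction''), for the \emph{stack} $B\mu_p$ the periodic cyclic homology $\HP(B\mu_p/k)$ need not agree with the Tate construction $\HH(B\mu_p/k)^{tS^1}$, so these are genuinely different objects and must be handled separately. For (3), I would compute $\HH(B\mu_p/k)^{tS^1}$ directly: since $\HH(B\mu_p/k)$ is the discrete ring $k[c]/(c^p)$ with its $S^1$-action, the Tate spectral sequence $E_2^{s,t}=H^s_{\mathrm{Tate}}(BS^1,H^t(\HH))\Rightarrow H^{s+t}(\HH^{tS^1})$ has $E_2$-page concentrated in the single row $t=0$ (the homotopy of $\HH$ being concentrated there), so there is no room for differentials and it degenerates trivially. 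For (5), the crystalline--$\TP$ spectral sequence has $E_2^{s,t}=H^{s-t}_\crys(B\mu_p)$; I would first identify $H^*_\crys(B\mu_p)$ as a $W(k)$-lift of $E(d)\otimes P(c)$ — here the lifting of $\mu_p$ to characteristic zero with a Frobenius lift (as in Proposition~\ref{HdRBmup}(ii)) gives degeneration of the crystalline--$\TP$ spectral sequence by the same splitting argument as Lemma~\ref{BmuHdRLift} applied integrally — and then show the resulting filtration on $H^*(\TP(B\mu_p))$ is non-split. The non-splitting is the subtle point: even though the spectral sequence degenerates, reassembling $\TP$ from its graded pieces $R\Gamma_\crys(B\mu_p)[2i]$ involves extension data, and I expect one detects the non-split extension by exhibiting a $p$-torsion phenomenon — for instance, that multiplication by the Bott-type element, or the interaction of the classes $c$ in adjacent weights, forces a nontrivial extension detectable after reduction mod $p$ by comparison with the non-degenerate de Rham--$\HP$ sequence of (4).

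\textbf{The main obstacle} I anticipate is part (5): proving the filtration on $H^*(\TP(B\mu_p))$ is non-split is not a dimension count and requires genuine control of the extensions, presumably by relating $\TP(B\mu_p)$ and $\HP(B\mu_p/k)=\TP(B\mu_p)/p$ and using that the latter's spectral sequence (part (4)) does \emph{not} degenerate — so a splitting of the integral filtration would be incompatible with the mod-$p$ behavior. Making this compatibility precise, keeping track of which spectral sequence ($\HP$ of the stack versus the Tate construction) is being used at each stage in light of the Warnings, is where the real work lies.
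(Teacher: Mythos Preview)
Your treatment of (1)--(3) matches the paper's exactly: (1) is already contained in the proof of Proposition~\ref{HHBmup}, (2) is Proposition~\ref{HdRBmup}(ii), and (3) is immediate because $\HH(B\mu_p/k)$ is concentrated in degree $0$.

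For (4), your ``propagation'' of the HKR differential is left imprecise, and the paper takes a cleaner route you may prefer. Since $\HH(B\mu_p/k)$ is discrete, one computes directly that $H^*(\HC^-(B\mu_p/k)) \cong P(t)\otimes k[c]/(c^p)$ with $|t|=2$, so $\HC^-$ is concentrated in even degrees. But $H^*_{\dR}(B\mu_p/k) = E(d)\otimes P(c)$ has an odd class $d$, so the spectral sequence from Hodge-filtered de Rham to $\HC^-$ cannot degenerate; by naturality neither can de Rham--$\HP$. To locate the differential: on the $E_2$-page $E(d)\otimes P(c)\otimes P(t^{\pm 1})$, both $t$ and $c$ are permanent ($t$ from $BS^1$, $c$ by pullback from $B\mathbb{G}_m$), and since $c^p=0$ in $\HC^-$ the only possibility is $d_p(d)=c^p$ up to a unit. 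This also gives $H^*(\HP(B\mu_p/k)) \cong P(t^{\pm 1})\otimes k[c]/(c^p)$, concentrated in even degrees, which is needed for (5).

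For (5) there is a genuine gap, and in fact your outline is internally inconsistent. You propose to prove degeneration of the crystalline--$\TP$ spectral sequence by ``the same splitting argument as Lemma~\ref{BmuHdRLift} applied integrally''---but a splitting argument would split the very filtration you then want to show is non-split. The resolution is that your expectation about $H^*_\crys(B\mu_p)$ is wrong: it is \emph{not} a free $W(k)$-lift of $E(d)\otimes P(c)$. Rather,
\[
H^*_\crys(B\mu_p) \;\cong\; W(k)[c]/(pc), \qquad |c|=2,
\]
concentrated in even degrees with $p$-torsion in each positive degree. (One sees $pc=0$ because multiplication by $p$ on $\mu_p$, hence on $H^2_\crys(B\mu_p)$, is zero; the odd de Rham class $d$ arises as a Bockstein from this $p$-torsion, not from a free odd crystalline class.) Degeneration of the crystalline--$\TP$ spectral sequence is then trivial: the $E_2$-page lives entirely in even total degree. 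For non-splitting, the argument you were reaching for at the end is exactly right once you have the correct input: since $\TP(B\mu_p)/p \simeq \HP(B\mu_p/k)$ is concentrated in even degrees by the computation in (4), $H^*(\TP(B\mu_p))$ is $p$-torsionfree; but the associated graded $H^*_\crys(B\mu_p)$ has nonzero $p$-torsion, so the filtration cannot split.
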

\begin{proof}
\begin{enumerate}
    \item[(1)] This was shown in the course of the proof of Proposition~\ref{HHBmup}.
    \item[(2)] This was shown in Proposition~\ref{HdRBmup} (see also Lemma~\ref{BmuHdRLift}).
    \item[(3)] This follows for degree reasons as $\HH(\B\mu_p/k)$ is concentrated in degree $0$.
    \item[(4)] It follows from the calculation of $\HH(B\mu_p/k)$ that 
        $H^*(\HC^-(\B\mu_p/k))\iso P(t)\otimes_k k[c]/(c^p),$ where $|t| = 2.$
In particular, this theory is concentrated in even degrees. Since we know that $\B\mu_p$ has de Rham cohomology in odd degree (Proposition~\ref{HdRBmup}), the
spectral sequence from Hodge-filtered de Rham cohomology to $\HC^-$ cannot
degenerate. 
By naturality, it follows that the de Rham--$\mathrm{HP}$ spectral sequence
cannot degenerate either. Explicitly, we find that the de Rham--$\mathrm{HP}$-spectral sequence has
$E_2$-term given by $E(d) \otimes P(c) \otimes P(t^{\pm 1})$; here $t$ is a
permanent cycle as it comes from the cohomology of $S^1$ and $c$ is a permanent
cycle as it comes from $B \mathbb{G}_m$. 
By the description of $H^*(\HC^-( B \mu_p/k))$, it follows that $c^p = 0$, so we must
have a nonzero differential 
$d_{p}\colon H^1_{\mathrm{dR}}( B \mu_p/k) \to H^{2p}_{\mathrm{dR}}( B \mu_p/k)$ annihilating $c^p$. 
The spectral sequence now shows that 
$H^*(\HP _*(B \mu_p/k))\simeq P(t^{\pm 1}) \otimes k[c]/c^p$, where $|t| = 2$. 

\item[(5)] Since $\TP(\B\mu_p)/p \simeq \HP(B\mu_p/k)$, the calculation in (4)
    implies that $H^*(\TP(\B\mu_p))$ is concentrated in even degrees and
    $p$-torsionfree. On the other hand, we have 
\[H^*_{\crys}(\B\mu_p) \simeq W[c]/(pc),\] 
where $|c|=2$: this follows from the isomorphism $R\Gamma_\crys(\B\mu_p)/p
\simeq R\Gamma_{\dR}(\B\mu_p/k)$, the calculation in Proposition~\ref{HdRBmup},
and the observation that multiplication by $n$ on $H^{2}_\crys(\B\mu_p)$ is
induced by the multiplication by $n$ endomorphism of $\B\mu_p$ (and is thus the
$0$ map when $p \mid n$). It follows that all terms on the $E_2$-page of the
crystalline--$\TP$ spectral sequence are in even degrees, so the spectral
sequence degenerates. As $H^*_{\crys}(B\mu_p)$ contains nonzero $p$-torsion
elements while $H^*(\TP(\B\mu_p))$ is $p$-torsionfree, the filtration on
$H^*(\TP(\B\mu_p))$ coming from this spectral sequence cannot split.
\end{enumerate}
\end{proof}

\begin{remark}[The HKR filtration does not split for Frobenius lifts]
In analogy with Deligne--Illusie \cite[Remark 2.2 (ii)]{deligne-illusie}, one
might wonder the following: given a smooth $k$-algebra with a lift $\tilde{R}$
to $W_2(k)$ and a lift $\tilde{\phi}\colon\tilde{R} \to \tilde{R}$ of the Frobenius,
can one choose an isomorphism $\HH(R/k) \simeq \oplus_i \Omega^i_{R/k}[i]$
splitting the HKR filtration that is functorial in the lifting data
$(\tilde{R},\tilde{\phi})$? Theorem~\ref{Bmuss} (1) shows that this is not
possible (via the argument of Lemma~\ref{BmuHdRLift} to pass from the affine
case to stacks).
\end{remark}

Finally, let us use the calculations above to record an example where the
crystalline--$\TP$ spectral sequence does not degenerate.

\begin{lemma}\label{lem:mupmup}
    The crystalline--$\TP$ spectral sequence for $B(\mu_p\times\mu_p)$ does
    not degenerate.
\end{lemma}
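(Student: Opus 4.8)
The plan is to leverage the Künneth-type structure of $B(\mu_p \times \mu_p) \simeq B\mu_p \times B\mu_p$ together with the computations already carried out for $B\mu_p$ in Theorem~\ref{Bmuss}, especially the non-degeneracy of the HKR and de Rham--$\HP$ spectral sequences. The key point is a dimension count: if the crystalline--$\TP$ spectral sequence for $B(\mu_p \times \mu_p)$ were to degenerate, then (since all the relevant vector spaces and $W(k)$-modules are finitely generated in each degree) the size of $H^*(\TP(B(\mu_p\times\mu_p)))$ would be forced to match that predicted by the $E_2$-page, i.e., by $H^*_{\crys}(B(\mu_p\times\mu_p))$. I will show this is incompatible with the actual value of $\TP$, which I can access via $\TP/p \simeq \HP$ and the de Rham--$\HP$ spectral sequence.

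\textbf{Key steps.} First I would record the Künneth decompositions. On the crystalline side, $R\Gamma_\crys(B(\mu_p\times\mu_p)) \simeq R\Gamma_\crys(B\mu_p) \otimes^L_{W} R\Gamma_\crys(B\mu_p)$, so from $H^*_\crys(B\mu_p) \simeq W[c]/(pc)$ with $|c|=2$ one computes $H^*_\crys(B(\mu_p\times\mu_p))$ explicitly — in particular it contains abundant $p$-torsion, concentrated in even degrees, and one can tabulate its length mod $p$ (equivalently, $\dim_k$ of its mod-$p$ reduction, using the Tor terms from the torsion). Second, on the $\TP$ side, I would use that $\TP(-)$ of an $\mathbf{F}_p$-stack lands in $D(\ZZ_p)$ and that $\TP(B(\mu_p\times\mu_p))/p \simeq \HP(B(\mu_p\times\mu_p)/k)$; the latter I compute either by the Künneth formula for $\HP$ over $k((t))$-type coefficients, or directly: from Theorem~\ref{Bmuss}(4), $H^*(\HP(B\mu_p/k)) \simeq P(t^{\pm 1}) \otimes k[c]/c^p$, and a Künneth argument (using that $\HP$ of these stacks is built from finite-dimensional pieces in a way that makes the Tate-valued Künneth spectral sequence collapse for degree reasons) gives $H^*(\HP(B(\mu_p\times\mu_p)/k))$ as a tensor product $P(t^{\pm1}) \otimes k[c_1]/c_1^p \otimes k[c_2]/c_2^p$ with $|c_i| = 2$. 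Third, I compare: degeneration of the crystalline--$\TP$ spectral sequence would force $\dim_k H^n(\TP(B(\mu_p\times\mu_p))/p)$ to equal the total dimension coming from $\bigoplus_i H^{n-2i}_\crys \otimes^L_{\ZZ_p} \mathbf{F}_p$, i.e., from both the free and torsion parts of $H^*_\crys$; but the torsion in $H^*_\crys(B(\mu_p\times\mu_p))$ contributes extra classes (each $W/p^k$ summand contributes a $\Tor$ term after reduction mod $p$) beyond what the mod-$p$ reduction of the torsion-free $H^*(\TP)$ can account for — exactly as in the proof of Theorem~\ref{Bmuss}(5), but now there is genuinely no room to absorb the discrepancy without a differential. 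So the spectral sequence cannot degenerate.

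\textbf{Main obstacle.} The delicate point is the bookkeeping of the Künneth formulas — specifically, making sure the $\HP$ Künneth argument is legitimate (the Tate construction does not commute with all limits, as the paper warns, so I should argue via the fact that $\HH(B\mu_p/k)$ is concentrated in degree $0$ and finite-dimensional, hence $\HH(B(\mu_p\times\mu_p)/k) \simeq \HH(B\mu_p/k) \otimes_k \HH(B\mu_p/k)$ is too, and then track the $S^1$-Tate construction carefully, or alternatively bypass $\HP$ entirely by working with $\HC^-$ where limits are unproblematic and then deduce the $\HP$/$\TP$ statement). The cleanest route is probably: compute $H^*_\crys$ and its mod-$p$ reduction exactly, compute $\dim_k H^*(\HP(B(\mu_p\times\mu_p)/k)) = \dim_k H^*(\TP(B(\mu_p\times\mu_p))/p)$ exactly, observe the two Poincaré series disagree, and conclude. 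Carrying out the arithmetic to exhibit the mismatch in a specific degree is routine once the two sides are in hand, so I would not belabor it; the essential content is that $p$-torsion in crystalline cohomology of the product is strictly larger than what a torsion-free $\TP$ can support.
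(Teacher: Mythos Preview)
Your high-level strategy---compute $\HP(B(\mu_p\times\mu_p)/k)$ to pin down $\TP$ modulo $p$, compute $H^*_\crys(B(\mu_p\times\mu_p))$ via K\"unneth, and exhibit an incompatibility---is exactly the paper's. However, your route to the $\HP$ computation has a genuine gap, and your endgame is more elaborate than necessary.

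\textbf{The gap.} You propose computing $\HP(B(\mu_p\times\mu_p)/k)$ either by an $\HP$-K\"unneth formula or by computing $\HH(B(\mu_p\times\mu_p)/k)^{tS^1}$. Both are problematic here. The paper explicitly warns that $\HP$ for syntomic stacks need \emph{not} satisfy K\"unneth, and that $\HP(\Xscr/k)$ need not agree with $\HH(\Xscr/k)^{tS^1}$ (the Tate construction does not commute with the descent limit defining $\HP$). Your workaround---``$\HH(B\mu_p/k)$ is finite-dimensional in degree $0$, hence so is $\HH$ of the product, now take Tate''---computes $\HH(\Xscr/k)^{tS^1}$, which is the wrong object; and the $\HC^-$ suggestion is left vague. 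The paper instead computes $\HP(B(\mu_p\times\mu_p)/k)$ by running the de Rham--$\HP$ spectral sequence: de Rham cohomology \emph{does} satisfy K\"unneth, so the $E_2$-page is $E(d_1)\otimes P(c_1)\otimes E(d_2)\otimes P(c_2)\otimes P(t^{\pm1})$, and naturality along the two projections to $B\mu_p$ forces $d_p(d_i)=c_i^p$ (up to units) as in Theorem~\ref{Bmuss}(4). The spectral sequence then collapses at $E_{p+1}$ onto something concentrated in even degrees.

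\textbf{The simpler endgame.} Once you know $H^*(\HP(B(\mu_p\times\mu_p)/k))$ lives in even degrees, you immediately get that $H^*(\TP(B(\mu_p\times\mu_p)))$ is concentrated in even degrees and $p$-torsion-free. On the other hand, a direct K\"unneth computation from $H^*_\crys(B\mu_p)\simeq W[c]/(pc)$ gives $H^3_\crys(B(\mu_p\times\mu_p))\cong k$ (the class $c_1\otimes c_2$ contributes a $\Tor_1$). Any such nonzero class sits in odd total degree on the $E_2$-page of the crystalline--$\TP$ spectral sequence, so degeneration would force $H^{\mathrm{odd}}(\TP)\neq 0$, a contradiction. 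This parity argument replaces your proposed full Poincar\'e-series comparison.
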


\begin{proof}
We saw in the proof of (3) of \Cref{Bmuss},
 via the de Rham--$\mathrm{HP}$ spectral sequence, that
$H^*(\HP( B \mu_p/k))$ is concentrated in even degrees and given by 
$P(t^{\pm 1}) \otimes_k k[c]/c^p$. 
While $\HP$ does not in general satisfy a K\"unneth formula for syntomic
$k$-stacks, de Rham cohomology
does. Running the de Rham--$\HP$ spectral sequence again, 
we find that $H^*(\HP( B (\mu_p \times \mu_p)/k))$ is concentrated in even degrees. 
 Therefore,
 $H^*(\TP(\B(\mu_p\times\mu_p)))$ is concentrated in even degrees and is
	 $p$-torsion-free, since $\TP/p \simeq \HP$. On the other
    hand, $H^3_\crys(\B(\mu_p\times\mu_p))\iso k$ by K\"unneth. The lemma follows.
\end{proof}

\subsection{$B\alpha_p$}

In this subsection, we calculate everything explicitly for $B\alpha_p$. We shall crucially exploit the natural $\mathbf{G}_m$-action on $B\alpha_p$, induced (ultimately) from the $\mathbf{G}_m$-action on $\mathbf{G}_a$, defined formally as follows.

\begin{observation}[Gradings]
The group scheme $\alpha_p = \mathrm{Spec}(k[t]/(t^p))$ has a natural
$\mathbf{G}_m$-action defined by requiring the function $t$ to have weight $1$. This induces a $\mathbf{G}_m$-action on $B\alpha_p$, and consequently there is a natural weight grading on associated cohomological invariants, such as Hodge, Hochschild, and de
Rham cohomology. Moreover, the differentials in the relevant spectral sequences respect the weight grading.
\end{observation}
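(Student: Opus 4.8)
The plan is to produce the $\Gm$-action already at the level of the group scheme $\alpha_p$, transport it to the \v{C}ech nerve of the standard syntomic cover of $B\alpha_p$, and then observe that all of the invariants of Construction~\ref{cons:invariantsstacks} and all of the filtrations of Definition~\ref{def:sss} arise from functorial (hence $\Gm$-equivariant) constructions, so that the weight grading propagates formally.

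First I would set up the $\Gm$-action on $\alpha_p = \Spec(k[t]/t^p)$ by scaling $t$, i.e. declaring $t$ to have weight $1$. This is well defined because the ideal $(t^p)$ is homogeneous, and it is an action by group-scheme automorphisms because the Hopf-algebra structure maps $t \mapsto t\otimes 1 + 1\otimes t$, $t\mapsto 0$, and $t \mapsto -t$ are homogeneous for the diagonal weights on $\alpha_p^{\times 2}$ (note that this $\Gm$-action does not commute with the ambient Frobenius $\Fr\colon \mathbf{G}_a\to\mathbf{G}_a$ of which $\alpha_p$ is the kernel, but that is irrelevant). Consequently the cosimplicial $k$-algebra $R^\bullet$ with $R^n = k[t_1,\dots,t_n]/(t_1^p,\dots,t_n^p)$ --- the \v{C}ech nerve of $\Spec(k)\to B\alpha_p$ of Example~\ref{synstacks} --- is a cosimplicial object in graded $k$-algebras, since its structure maps are assembled from the (equivariant) multiplication, unit, and projections of $\alpha_p$, with each $t_i$ of weight $1$.

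Next I would observe that, by Theorem~\ref{BMS2descent} and Construction~\ref{cons:invariantsstacks}, every invariant $F$ of $B\alpha_p$ is computed as $\Tot_n F(R^n)$, and that each functor in play carries a graded $k$-algebra to a graded object of $D(k)$ naturally in graded maps: the cotangent complex $L_{R/k}$ of a graded $k$-algebra is a graded $R$-module, hence so is each $\wedge^i L_{R/k}$; $\HH(R/k) = R\otimes^L_{R\otimes_k R}R$ is graded because $R\otimes_k R$ is graded (by total degree) and multiplication is a graded map, and its cyclic $S^1$-structure is assembled from graded cyclic permutations, so $\HC^-(R/k)$ and $\HP(R/k)$ are graded as well; and $L\Omega_{R/k}$ with its Hodge and conjugate filtrations is built from the $\wedge^i L$ by left Kan extension and filtered (co)limits, hence is graded (similarly $\THH$, $\TP$, and crystalline cohomology via Remark~\ref{dRcrys}). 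Since $k$ is a field, a graded object of $D(k)$ is an object of the derived category of $\mathrm{Rep}(\Gm) = \bigoplus_{m\in\mathbf{Z}}\mathrm{Vect}_k$, which is semisimple, so it splits canonically into weight-$m$ summands; and a totalization of graded objects along graded cosimplicial maps is again graded. This yields the asserted weight grading on $R\Gamma(B\alpha_p,\wedge^i L_{B\alpha_p/k})$, $\HH(B\alpha_p/k)$, $R\Gamma_{\dR}(B\alpha_p/k)$, and the remaining invariants.

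Finally, for the spectral sequences I would note that each filtration of Definition~\ref{def:sss} --- the $\HKR$ filtration on $\HH$, the Hodge and conjugate filtrations on $L\Omega$, and the motivic filtrations on $\HC^-$, $\HP$, $\TP$ --- is produced by one of these same functorial procedures (left Kan extension of Postnikov or b\^{e}te filtrations, descent, the constructions of~\cite{BMS2}), so it is a filtration by graded subobjects with graded associated graded; hence every $E_r$-page is graded and every $d_r$, being a connecting map of the induced graded exact couple, preserves the weight. I expect the one genuine obstacle to be homotopy coherence --- ensuring the weight decomposition is compatible with the $S^1$-action on $\HH$ and with the motivic filtrations on the nose, not merely on homotopy groups. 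I would handle this by running the entire argument with the $\Gm$-equivariant (graded) lifts throughout: the operations $L_{-/k}$, $\wedge^i$, $\otimes^L$, $(-)^{hS^1}$, $(-)^{tS^1}$, left Kan extension, and $\Tot$ all make sense on graded objects and commute with the forgetful functor to $D(k)$, so nothing has to be split by hand until the final appeal to semisimplicity of $\mathrm{Rep}(\Gm)$.
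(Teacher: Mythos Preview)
Your proposal is correct. The paper states this as an observation without supplying any argument, so there is no proof to compare against; your write-up is a correct and careful expansion of the standard reasoning the paper leaves implicit (functoriality of the invariants in graded $k$-algebras, applied to the graded \v{C}ech nerve of $\Spec(k)\to B\alpha_p$).
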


\begin{proposition}
\label{HodgeBalphap}
If $p > 2$, 
the Hodge cohomology of $\B\alpha_p$ is given by
\[ H^*(B\alpha_p,\bigwedge^*L_{B\alpha_p/k})\iso E(\alpha)\otimes P(\beta) \otimes E(s) \otimes P(u), \]
where $\alpha \in H^1(B\alpha_p, \mathcal{O})$, $\beta \in H^2(B\alpha_p,
\mathcal{O})$, $s \in H^0(B\alpha_p, L_{B\alpha_p/k})$ and $u \in
H^1(B\alpha_p, L_{B\alpha_p/k})$. Moreover, the weights of $\alpha,\beta,s$
and $u$ are $1,p,p$ and $1$ respectively.
For $p = 2$, we replace $E(\alpha) \otimes P(\beta)$ with $P(\alpha)$. 
\end{proposition}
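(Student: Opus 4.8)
The plan is to feed the structure of the co-Lie complex of $\alpha_p$ into Totaro's isomorphism~\eqref{BGhodgecoh}, which presents $H^*(B\alpha_p,\bigwedge^* L_{B\alpha_p/k})$ as the rational cohomology of the complex of $\alpha_p$-representations $\bigoplus_{i\geq 0}\Sym^i(\mathrm{coLie}(\alpha_p))[-i]$. So there are really two tasks: understand $\mathrm{coLie}(\alpha_p)$ as an object of $D(B\alpha_p)$, keeping track of the natural $\Gm$-action, and then compute the group-scheme cohomology $H^*(\alpha_p,-)$ of the resulting coefficients.

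First I would pin down $\mathrm{coLie}(\alpha_p)$. Writing $\alpha_p=\Spec(A)$ with $A=k[t]/(t^p)$, the cotangent complex $L_{A/k}$ is computed by the two-term complex of free $A$-modules $\big[(t^p)/(t^{2p})\xrightarrow{\,\overline{t^p}\mapsto d(t^p)\,}\Omega^1_{k[t]/k}\otimes_k A\big]$ in degrees $-1,0$; since $d(t^p)=p\,t^{p-1}\,dt=0$ the differential vanishes, so this complex splits as a complex, $\Gm$-equivariantly for the action in which $t$ has weight $1$. Restricting along the unit section $e\colon\Spec(k)\to\alpha_p$ then yields $\mathrm{coLie}(\alpha_p)\simeq V\oplus W[1]$ in $D(B\alpha_p)$, where $V=e^*\Omega^1_{\alpha_p/k}$ and $W=e^*\big((t^p)/(t^{2p})\big)$ are one-dimensional $\alpha_p$-representations carrying $\Gm$-weights $1$ and $p$ respectively, and $W[1]$ sits in cohomological degree $-1$. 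Because $\alpha_p$ has no nontrivial characters --- by Cartier self-duality, $\Hom(\alpha_p,\Gm)=\alpha_p^\vee(k)=\alpha_p(k)=0$ --- both $V$ and $W$ are trivial as representations, so $\mathrm{coLie}(\alpha_p)$, with its weight grading remembered, is a sum of shifts of the trivial representation. This is the $\alpha_p$-analogue of the fact $\mathrm{coLie}(\mu_p)\simeq\mathcal O\oplus\mathcal O[1]$ used for $B\mu_p$, and it is the step that requires the most care.

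Next I would take symmetric powers and feed the result back into~\eqref{BGhodgecoh}. Since $\bigwedge^{\geq 2}$ of a rank-one free module vanishes, the d\'ecalage isomorphism gives $\Sym^b(W[1])=0$ for $b\geq 2$, with $\Sym^0=k$ and $\Sym^1(W[1])=W[1]$, while $\Sym^a(V)$ is the trivial representation of weight $a$. Assembling these --- in form degree $i\geq 1$ one gets exactly the classes $u^i$ and $s\,u^{i-1}$ --- and using that~\eqref{BGhodgecoh} is multiplicative, one finds that $\bigoplus_i\Sym^i(\mathrm{coLie}(\alpha_p))[-i]$ is, as a bigraded algebra object with trivial $\alpha_p$-action, $P(u)\otimes_k E(s)$, where $u$ has cohomological degree $1$, form degree $1$, weight $1$, and $s$ has cohomological degree $0$, form degree $1$, weight $p$ (so $s^2=0$, as $\Sym^2(W[1])=0$). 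As this is a free $k$-module with trivial $\alpha_p$-action, its rational cohomology is $H^*(\alpha_p,k)\otimes_k P(u)\otimes_k E(s)$ as an algebra, reducing the problem to computing $H^*(\alpha_p,k)$ together with its weight grading.

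Finally, $H^*(\alpha_p,k)$ is a classical computation: $\mathrm{Rep}(\alpha_p)$ is the category of modules over the restricted enveloping algebra $u(\mathrm{Lie}\,\alpha_p)\cong k[x]/(x^p)$, where $x=\partial_t$ has weight $-1$ (and $x^{[p]}=0$), and $k$ admits the $2$-periodic minimal free resolution $\cdots\xrightarrow{x^{p-1}}k[x]/(x^p)\xrightarrow{x}k[x]/(x^p)\to k$. Reading off $\Ext$ and tracking weights through this resolution gives $H^*(\alpha_p,k)\iso E(\alpha)\otimes P(\beta)$ with $\alpha$ in degree $1$ and weight $1$ and $\beta$ in degree $2$ and weight $p$ when $p>2$, and $H^*(\alpha_2,k)\iso P(\alpha)$ with $\alpha$ in degree $1$ and weight $1$ when $p=2$. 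Tensoring with $P(u)\otimes_k E(s)$ and matching bidegrees and weights gives exactly the stated description of $H^*(B\alpha_p,\bigwedge^* L_{B\alpha_p/k})$. The only genuinely delicate point is the determination of the equivariant structure of $\mathrm{coLie}(\alpha_p)$ in the second paragraph; everything else is formal or standard, modulo bookkeeping with the bigrading and Koszul signs.
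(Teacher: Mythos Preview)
Your overall strategy matches the paper's, and the computations of $H^*(\alpha_p,k)$ and of the symmetric powers are fine. The gap is in the step where you assert $\mathrm{coLie}(\alpha_p)\simeq V\oplus W[1]$ \emph{in $D(B\alpha_p)$}. The vanishing of the differential in your two-term model only shows that $e^*L_{\alpha_p/k}$ splits in $D(k)$ (indeed $\Gm$-equivariantly), but this says nothing about the $\alpha_p$-equivariant structure carried by $\mathrm{coLie}(\alpha_p)$: an object of $D(B\alpha_p)$ whose cohomology groups are trivial representations need not itself be a direct sum of shifts of $\mathcal{O}$, since $\Ext^2_{B\alpha_p}(\mathcal{O},\mathcal{O})=H^2(B\alpha_p,\mathcal{O})\neq 0$. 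Your observation that $V$ and $W$ are trivial (via the vanishing of characters) is correct, but it does not address this extension problem.

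The paper fills this gap as follows. Once $H^0$ and $H^{-1}$ of $\mathrm{coLie}(\alpha_p)$ are known to be trivial representations of $\Gm$-weights $1$ and $p$, the obstruction to splitting in $D(B\alpha_p)$ is a $\Gm$-equivariant class in $H^2(B\alpha_p,\mathcal{O})$ of weight $p-1$; but your own computation shows $H^2(B\alpha_p,\mathcal{O})$ is one-dimensional of weight $p$, so the obstruction vanishes. (The paper also remarks that one can alternatively invoke the commutativity of $\alpha_p$ together with \cite[Ch.~VII, Prop.~4.1.1]{illusie-cotangent2}, which is closer in spirit to what you seem to have in mind, but that too requires justification.) Note that the paper computes $H^*(B\alpha_p,\mathcal{O})$ \emph{before} establishing the splitting, whereas you postpone it to the end; reordering your proof would let you run the same weight argument.
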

\begin{proof}
We begin by showing that $H^*(B\alpha_p, \mathcal{O}) \simeq E(\alpha) \otimes P(\beta)$ with degrees and weights as in the Proposition. By Cartier duality\footnote{For a finite $k$-group scheme $G$, the abelian category of coherent sheaves $\mathrm{Coh}(BG)$ on $BG$ can be identified as the category $\mathrm{Rep}^f(G)$ of finite dimensional representations $G$, i.e., as the category $\mathrm{CoMod}^f_{\mathcal{O}(G)}$ of finite dimensional comodules over the $k$-coalgebra $\mathcal{O}(G)$. When $G$ is commutative, this is antiequivalent to the category $\mathrm{Mod}^f_{\mathcal{O}(G^\vee)}$ of finite dimensional modules over $k$-algebra $\mathcal{O}(G)^\vee \simeq \mathcal{O}(G^\vee)$, where $G^\vee$ denotes the Cartier dual of $G$. Under this identification, the trivial representation of $G$ corresponds to the residue field at the origin on $G^\vee$. Computing $\mathrm{Ext}$-groups now gives the isomorphism used above.}, we have $H^*(B\alpha_p, \mathcal{O}) \simeq \mathrm{Ext}^*_{k[s]/(s^p)}(k,k)$, where $k[s]/(s^p)$ denotes the Hopf algebra of functions on the Cartier dual of $\alpha_p$ (and is thus also a copy of $\alpha_p$ itself, but the weight of the generator $s$ is now $-1$). One then calculates using the standard resolution
\[ \left( \cdots  k[s]/(s^p) \xrightarrow{s^{p-1}} k[s]/(s^p) \xrightarrow{s} k[s]/(s^p) \xrightarrow{s^{p-1}} k[s]/(s^p) \xrightarrow{s} k[s]/(s^p) \right) \stackrel{can}{\simeq} k,\]
graded in a natural way, that the answer is as predicted. Alternately, one can find this calculation in \cite[Theorem 2.4]{Friedlander03}.

To compute Hodge cohomology, we first calculate the co-Lie complex. Using the closed immersion $\alpha_p \subset \mathbb{G}_a$ of group schemes, we
learn that $L_{\alpha_p/k}$ is computed by the $2$-term complex $(t^p)/(t^{2p})
\xrightarrow{d} k[t]/(t^p) dt$. 
Restricting along the origin gives a $2$-term
complex of $\alpha_p$-representations computing $\mathrm{coLie}(\alpha_p)$ as
an object of $D(k)$. 
Using this complex, we find that $H^0( \mathrm{coLie}(\alpha_p)) = H^{-1}(
\mathrm{coLie}(\alpha_p)) = k$ are both the trivial one-dimensional representation of $\alpha_p$. 
Furthermore, $H^0$ (corresponding to $dt$)  is concentrated in weight one while
$H^{-1}$ 
(corresponding to $t^p$) is concentrated in weight $p$. 
But then $\mathrm{coLie}(\alpha_p) \in D(B \alpha_p)$ splits as $\mathcal{O} \oplus \mathcal{O}[1]$: the obstruction to splitting is a weight $p-1$ map $\mathcal{O} \to \mathcal{O}[2]$, and there are no such maps by the calculation of $H^2(B\alpha_p, \mathcal{O})$ explained in the previous paragraph. Another approach to seeing this description of $\mathrm{coLie}(\alpha_p)$ is to use that $\alpha_p$ is
commutative and \cite[Ch. VII, Prop. 4.1.1]{illusie-cotangent2}. 

Thus, we learn that, as objects in $D(B\alpha_p)$, we have $\mathrm{coLie}(\alpha_p) \simeq
\mathcal{O} s[1] \oplus \mathcal{O} u$, where $s $ corresponds to a class in $ H^0(B\alpha_p,
L_{B\alpha_p/k})$ which has weight $p$ and $u$ corresponds to a class
$H^1(B\alpha_p,
L_{B\alpha_p/k})$ which has weight $1$. As in Proposition~\ref{HdRBmup}, one then
finds that 
$$
\bigoplus_{i \geq 0} \mathrm{Sym}^i( \mathrm{coLie}(G))[-i]
\simeq E(s) \otimes P(u) \otimes \mathcal{O}  \in D(B\alpha_p).$$
Combining this with the calculation of $H^*(B\alpha_p, \mathcal{O})$ and using the projection formula then gives the desired answer.
\end{proof}

For the next result, we recall that $\HH( B \alpha_p/k)$ acquires an action of
the circle $S^1$, inducing an operator $H^* ( \HH( B \alpha_p/k)) \to H^{*
-1}( \HH( B \alpha_p/k))$ given by multiplication by the fundamental class of
$S^1$; this is also identified (up to 2-periodicity) with
the first differential in the Tate spectral sequence. 

\begin{proposition}\label{HHofBalphap}
    If $p$ is odd, then $H^*(\HH(B\alpha_p/k))$ is isomorphic to $E(\alpha)
    \otimes P(\beta) \otimes k[u]/u^p $ with $\alpha$ having degree $1$ and
    weight $1$, $\beta$ having degree $2$ and weight $p$, and $u$ having degree
    $0$ and weight $1$.  If $p=2$, then $H^*(\HH(B\alpha_p/k))$ is given by
    $k[\alpha]\otimes k[u]/u^p$ if $p=2$ with the same degrees and weights as
    in the odd case.
    The circle action carries $\alpha \mapsto u$ (up to units); in particular, the
    Tate spectral sequence for $\HH(B \alpha_p/k)^{tS^1}$ does not degenerate. 
\end{proposition}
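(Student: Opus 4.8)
The plan is to run the HKR spectral sequence, whose $E_2$-page is the Hodge cohomology of Proposition~\ref{HodgeBalphap}, while simultaneously tracking the total degree, the wedge-power degree (which indexes the HKR filtration), and the $\mathbf{G}_m$-weight; all differentials preserve the weight. First I would record that $\alpha$ and $\beta$, being pulled back from $H^*(B\alpha_p,\mathcal{O})$ along the filtered unit map $\mathcal{O}_{B\alpha_p}\to\HH(B\alpha_p/k)$, are permanent cycles (they are images of permanent cycles under a map of spectral sequences induced by that filtered map). A bookkeeping of (degree, wedge-power, weight) then shows that $u\in H^1(B\alpha_p,\bigwedge^1 L_{B\alpha_p/k})$ is also permanent and is never the target of a differential, and that the only differential on the entire $E_2$-page not forced to vanish by these constraints is $d_p(s)=\lambda\,u^p$ for some $\lambda\in k$. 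By multiplicativity, $E_p=E_2$ and the computation reduces to deciding whether $\lambda=0$.

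Second, I would show $\lambda\neq0$ by the Frobenius argument of Proposition~\ref{HHBmup}: since $u$ lies in positive HKR filtration, its pullback along $\pi\colon\Spec(k)\to B\alpha_p$ vanishes, and since the Frobenius of $\alpha_p$ is the zero endomorphism, the Frobenius of $B\alpha_p$ factors through $\pi$ and hence kills $u\in H^0(\HH(B\alpha_p/k))$; but it acts there by the $p$-th power map, so $u^p=0$. As $u^p$ would otherwise be a nonzero permanent cycle, and $d_p(s)$ is the unique differential capable of hitting it, $\lambda\neq0$. Then $E_{p+1}=E_\infty=E(\alpha)\otimes P(\beta)\otimes k[u]/u^p$ (and $P(\alpha)\otimes k[u]/u^p$ if $p=2$, via the last sentence of Proposition~\ref{HodgeBalphap}); since the weight grading separates the HKR filtration steps (each bidegree of total degree and weight contains at most one monomial $\alpha^a\beta^b u^d$), there are no extension problems, and graded-commutativity forces $\alpha^2=0$ for odd $p$ — giving the stated ring with the stated degrees and weights.

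For the $S^1$-action I would use the standard fact that the HKR filtration is $S^1$-equivariant with trivial induced action on $\gr^\bullet_{\HKR}$, and that the resulting filtration-degree-one operation on $\gr^\bullet_{\HKR}\HH$ is the (derived) de Rham differential (cf.~\cite{BMS2}). Hence the operator $B$ (multiplication by the fundamental class of $S^1$) raises HKR filtration by one, so $B(\alpha)\in\Fil^1 H^0(\HH(B\alpha_p/k))$ and its image in $\gr^1 H^0(\HH(B\alpha_p/k))=H^1(B\alpha_p,\bigwedge^1 L_{B\alpha_p/k})$ is $d_{\dR}(\alpha)$. Since $B$ commutes with the $\mathbf{G}_m$-action, $\alpha$ has weight $1$, and the weight-$1$ part of $H^0(\HH(B\alpha_p/k))$ is $k\cdot u$, we conclude $B(\alpha)=\mu\,u$ with $\mu$ a unit multiple of $d_{\dR}(\alpha)\in H^1(B\alpha_p,\bigwedge^1 L_{B\alpha_p/k})=k\cdot u$. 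Non-degeneration of the Tate spectral sequence is then immediate, since $B$ is, up to $2$-periodicity, its first differential.

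The main obstacle is thus to rule out $\mu=0$, i.e.\ to show $d_{\dR}(\alpha)\neq0$ (equivalently, that the first Hodge--de Rham differential $H^1(B\alpha_p,\mathcal{O})\to H^1(B\alpha_p,\bigwedge^1 L_{B\alpha_p/k})$ of $B\alpha_p$ is nonzero), since the weight grading by itself only forces $B(\alpha)$ to be \emph{some} multiple of $u$. I would prove this by a \v{C}ech computation along $\pi\colon\Spec(k)\to B\alpha_p$: the class $\alpha$ is represented by the tautological coordinate $t\in k[t]/t^p=\mathcal{O}(\alpha_p)$, so $d_{\dR}(\alpha)$ is represented by $dt\in\Omega^1_{\alpha_p/k}$, which is a \v{C}ech $1$-cocycle (a direct check, using the additivity of $t$) whose class is nonzero — because $dt\neq0$ in $\Omega^1_{\alpha_p/k}$ and, as $L_{k/k}=0$, there is nothing in \v{C}ech degree $0$ to bound it. Alternatively, one may invoke the non-degeneration of the Hodge--de Rham spectral sequence of $B\alpha_p$ (Proposition~\ref{dRofHP}, Remark~\ref{ConjBalphap}).
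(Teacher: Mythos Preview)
Your argument for the ring structure of $H^*(\HH(B\alpha_p/k))$ via the HKR spectral sequence is essentially the paper's: permanence of $\alpha,\beta$ from the unit map, permanence of $u$ by weight, and the Frobenius argument forcing $d_p(s)=u^p$ up to units. Your additional remarks on extension problems and graded-commutativity are correct refinements the paper leaves implicit.

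Where you genuinely diverge is in the identification of the circle action. The paper argues structurally: for any nonnegatively graded commutative $k$-algebra $R$ with $R_0=k$, one has an $S^1$-equivariant equivalence $\HH(R/k)_{\mathrm{wt}=1}\simeq C_*(S^1;k)\otimes_k (L_{R/k})_{\mathrm{wt}=1}$ (checked on free algebras and Kan-extended), so the weight-$1$ part of $\HH$ is an induced $S^1$-module; applying this functorially to the \v{C}ech nerve of $\Spec(k)\to B\alpha_p$ immediately gives $B(\alpha)=u$ up to units. Your route instead uses that the HKR filtration is $S^1$-equivariant with trivial action on graded pieces, so $B$ raises filtration and the induced map $\gr^t\to\gr^{t+1}[-1]$ is the de Rham differential; combined with the weight bookkeeping (which shows $\Fil^2$ is zero in degree $0$ and weight $1$) this reduces the question to $d_1(\alpha)\neq 0$ in the Hodge--de Rham spectral sequence. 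Both approaches are valid. The paper's is more self-contained and avoids invoking the $B$-versus-$d_{\dR}$ compatibility; yours has the virtue of making explicit that the Tate non-degeneration here is literally the same phenomenon as the Hodge--de Rham non-degeneration of Proposition~\ref{dRofHP}, which is conceptually satisfying.

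One caution: your \v{C}ech verification that $d_{\dR}(\alpha)\neq 0$ is a bit informal---you are implicitly computing $H^1(B\alpha_p,L_{B\alpha_p/k})$, not $H^1$ of a sheaf of K\"ahler differentials, and the \v{C}ech complex for the stacky cotangent complex along $\Spec(k)\to B\alpha_p$ requires more care than ``$dt$ is a nonzero cocycle''. Since you already note that Proposition~\ref{dRofHP} (which computes $d_1(\alpha)=u$ directly) suffices, I would simply invoke that and drop the \v{C}ech sketch.
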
 

\begin{proof} 
    We give the proof when $p$ is odd. We will use the HKR spectral sequence
    and the calculation of the $E_2$-page coming from
    Proposition~\ref{HodgeBalphap}. Note that $|\alpha|=(1,0)$,
    $|\beta|=(2,0)$, $|s|=(0,-1)$, and $|u|=(1,-1)$ in the $E_2$-page of the HKR
    spectral sequence.
    
    We begin by noting that $\alpha$ and $\beta$ are permanent cycles arising
    from the map  $R \Gamma(B \alpha_p, \mathcal{O}) \to \HH(B \alpha_p/k)$
    which comes from choosing a basepoint on $S^1$. (In fact, we recall that
    $\HH(\Xscr/k)$ must contain $R \Gamma(\Xscr, \mathcal{O})$ as a summand
    for any syntomic stack $\Xscr/k$.) Next, since every differential respects
    the weight grading, we conclude that $u$ (which has weight $1$) is a
    permanent cycle: all weights that occur on the target of a differential
    emanating from $u$ have weights $> 1$. Finally, we claim $d_{p}(s) = u^p$
    up to units: this is proven like the analogous claim in
    Proposition~\ref{HHBmup}, noting that Frobenius on $B\alpha_p$ factors
    through a point. There are no further differentials (as $\alpha$, $\beta$,
    and $u$ are permanent), so we obtain that $H^*(\HH(B\alpha_p/k))$ has the
    predicted shape.

    For the circle action, we use the following observation: if $R$ is any
    nonnegatively graded commutative
    $k$-algebra with $R_0 = k $, then we have an $S^1$-equivariant equivalence
    in weight $1$,
    $\HH(R/k)_{\mathrm{wt} = 1} \simeq C_*(S^1;k ) \otimes_k (L_{R/k})_{\mathrm{wt} = 1}$
    as one sees by reducing to the free case. 
    In particular, in weight $1$, the circle action on $\HH(-/k)$ is always induced. 
    Since this is functorial, it applies to $B \alpha_p$ too, and we find that 
    $H^*(\HH(B \alpha_p/k))$ in weight $1$ has an induced $S^1$-action, whence the claim. 
\end{proof}

\begin{proposition} 
\label{dRofHP}
For all $p$, the de Rham cohomology of $B\alpha_p$ is given by 
\[ H^*_{\mathrm{dR}}(B\alpha_p/k) \simeq E( \alpha') \otimes P( \beta'),\] 
where $\alpha'$ has degree $1$ and weight $p$, and $\beta'$ has degree $2$ and
weight $p$.  In particular, both the conjugate and the Hodge--de Rham spectral
sequences for $B\alpha_p$ fail to degenerate.
\end{proposition}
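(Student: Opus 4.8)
The plan is to run the Hodge--de Rham spectral sequence $E_1^{s,t}=H^t(B\alpha_p,\bigwedge^s L_{B\alpha_p/k})\Rightarrow H^{s+t}_{\mathrm{dR}}(B\alpha_p/k)$, whose $E_1$-page is the Hodge cohomology of Proposition~\ref{HodgeBalphap}, and to determine all of its differentials from two inputs: the $\mathbf{G}_m$-weight grading, which every differential respects, and the Hochschild computation of Proposition~\ref{HHofBalphap}, which will supply the one nonzero differential off the $E_1$-page. Since the spectral sequence is multiplicative and $E_1\simeq E(\alpha)\otimes P(\beta)\otimes E(s)\otimes P(u)$ (with $E(\alpha)\otimes P(\beta)$ replaced by $P(\alpha)$ when $p=2$), it suffices to control $d_r$ on the ring generators.

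First comes a bookkeeping step. From the splitting $\mathrm{coLie}(\alpha_p)\simeq\mathcal O\,s[1]\oplus\mathcal O\,u$ and the d\'ecalage isomorphisms for derived exterior powers (used already in the proof of Proposition~\ref{HodgeBalphap}), one writes out each $\bigwedge^i L_{B\alpha_p/k}$ as an explicit sum of shifts of $\mathcal O$ on $B\alpha_p$. Comparing cohomological degrees and weights then shows that $s$, $\beta$ (resp.\ $\alpha^2$ when $p=2$), and $u$ support no differential, so they are permanent cycles, and that the only differential permitted by degree and weight off of any generator is $d_1(\alpha)$, which must land in the line $k\cdot u\subseteq H^1(B\alpha_p,\bigwedge^1 L_{B\alpha_p/k})$.

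The crux is to prove $d_1(\alpha)\neq 0$. For this I would invoke the $S^1$-equivariance of the HKR filtration on $\HH(B\alpha_p/k)$: the operator it induces on $\gr^\star_{\HKR}$ is, up to units, the de Rham differential $\bigwedge^i L\to\bigwedge^{i+1}L$, and under this identification the Hodge--de Rham $d_1$ from $\gr^0$ to $\gr^1$ becomes the circle operator on $H^*(\HH(B\alpha_p/k))$. By Proposition~\ref{HHofBalphap} that operator sends $\alpha\mapsto u$ up to a unit; since $\alpha\in H^1(\HH(B\alpha_p/k))$ sits in HKR-filtration exactly $0$ (it is pulled back from $R\Gamma(B\alpha_p,\mathcal O)$), $u\in H^0(\HH(B\alpha_p/k))$ sits in HKR-filtration exactly $1$, and, by weight-homogeneity together with the explicit shape of $\HH(B\alpha_p/k)$, the image of $\alpha$ acquires no higher-filtration correction, passing to associated graded gives $d_1(\alpha)=c\,u$ with $c\in k^{\times}$.

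Granting this, a short Koszul computation with $d_1$ leaves $E_2\simeq P(\beta)\otimes E(s)$ when $p$ is odd (resp.\ $k[\alpha^2]\otimes E(s)$ when $p=2$), and the first step shows there are no later differentials, so the spectral sequence degenerates from $E_2$ on. Reading off the $\mathbf{G}_m$-equivariant associated graded yields $H^*_{\mathrm{dR}}(B\alpha_p/k)\simeq E(\alpha')\otimes P(\beta')$ with $\alpha'=s$ in degree $1$ and weight $p$ and $\beta'$ in degree $2$ and weight $p$. Non-degeneracy of the Hodge--de Rham spectral sequence is then immediate from $d_1\neq 0$; and since the Hodge and de Rham cohomologies of $B\alpha_p$ are finite-dimensional in each degree, the conjugate spectral sequence degenerates if and only if the Hodge--de Rham one does, so it too fails to degenerate. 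I expect the crux --- carrying the nonvanishing circle action $\alpha\mapsto u$ of Proposition~\ref{HHofBalphap} through the HKR filtration to identify the Hodge--de Rham $d_1$ --- to be the main obstacle; the weight bookkeeping is routine once the derived exterior powers of $L_{B\alpha_p/k}$ are in hand.
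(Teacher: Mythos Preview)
Your argument is correct but takes a genuinely different route from the paper's, and the detour costs you. The paper dispatches the crux in one line: the conjugate filtration on $R\Gamma_{\dR}(B\alpha_p/k)$ has graded pieces $R\Gamma(B\alpha_p,\bigwedge^i L_{B\alpha_p^{(1)}/k})[-i]$, and the Frobenius twist multiplies all $\mathbf{G}_m$-weights by $p$. Hence every class in $H^*_{\dR}(B\alpha_p/k)$ has weight divisible by $p$. Since $\alpha$ has weight $1$, it cannot survive the Hodge--de Rham spectral sequence; combined with your bookkeeping (that the only possible differential off $\alpha$ is $d_1(\alpha)\in k\cdot u$), this forces $d_1(\alpha)=u$ up to units with no further input. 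This is simpler and avoids Proposition~\ref{HHofBalphap} entirely.

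Your approach instead transports the nonvanishing circle action $\alpha\mapsto u$ on $H^*(\HH(B\alpha_p/k))$ through the HKR filtration. This works, but the ``$S^1$-equivariance'' of the HKR filtration alone is not enough: equivariance only says $B$ preserves each $\Fil^i$, whereas you need that $B$ \emph{increases} filtration by one (equivalently, that the induced $S^1$-action on each $\gr^i$ is trivial) and that the resulting map $\gr^i\to\gr^{i+1}[-1]$ is the de Rham differential. Both facts are standard (they hold for polynomial algebras and propagate by left Kan extension and descent), but you should state them explicitly. Once that is in hand, your filtration chase---$\alpha$ has HKR-filtration exactly $0$, $B(\alpha)=cu$ with $c\in k^\times$, and $u$ has HKR-filtration exactly $1$, so the image of $B(\alpha)$ in $\gr^1$ is nonzero---is clean. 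One minor point: for $p=2$, the claim that $\alpha^2$ supports no differential is not purely a weight statement (weight permits $d_1(\alpha^2)\in k\cdot\alpha u$); you need Leibniz together with $d_1(\alpha)\in k\cdot u$ and $\alpha u=u\alpha$ to see $d_1(\alpha^2)=0$.
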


\begin{proof}
We use the Hodge--de Rham spectral sequence and the calculation of the
$E_1$-page coming from Proposition~\ref{HodgeBalphap}. Conjugate filtration
considerations show that the abutment can have no terms in weights not
divisible by $p$, so there must be a differential in weight one, which forces
$d_1(\alpha)  = u$ up to units. This forces all the differentials as $\beta, s$
now have to be permanent cycles for weight and degree reasons.
\end{proof}

\begin{remark}[Non-degeneration of the conjugate spectral sequence for $B\alpha_p$]
\label{ConjBalphap}
One can also calculate the de Rham cohomology of $B\alpha_p$ using the
conjugate spectral sequence, giving a different proof of
Proposition~\ref{dRofHP}. As $\alpha_p$ is defined over $\mathbf{F}_p$, we may
assume $k = \mathbf{F}_p$, which allows us to suppress Frobenius twists. The
conjugate spectral sequence takes the form
\[ E_2^{i,j}=H^i(B\alpha_p, \bigwedge^j L_{B\alpha_p/k}) \Rightarrow
H^{i+j}_{\mathrm{dR}}(B\alpha_p/k).\]

The $E_2$-page is again calculated by Proposition~\ref{HodgeBalphap}, except that all weights are multiplied by $p$ (due to the implicit Frobenius twists).

 First, note that $\alpha$ and $\beta$ are permanent cycles as there is no room
 for the differentials. Moreover, by weight considerations, $d_2(u) = 0$, which
 makes $u$ a permanent cycle as the higher differentials have $0$ target. The
 key claim is that $d_2(s) = \beta$ (up to units). Granting this claim, one
 immediately deduces the calculation of $H^*_{\mathrm{dR}}(B\alpha_p/k)$ given
 in Proposition~\ref{dRofHP}, as well as the fact that both the Hodge--de Rham
 and conjugate spectral sequences do not degenerate (by counting dimensions).

To prove the claim $d_2(s) = \beta$ (up to units), we use that $B\alpha_p$ does
{\em not} lift to $\mathbf{Z}/p^2$. This implies that the map
$\mathrm{ob}_{B\alpha_p}\colon L_{B\alpha_p/k} \to \mathcal{O}[2]$ measuring the
failure to lift  to $W_2$ is nonzero. But, for any syntomic stack $\Xscr/k$,
the $d_2$ differential $H^i(\Xscr, L_{\Xscr/k}) \to H^{i+2}(\Xscr,
\mathcal{O})$ in the conjugate spectral sequence is just the map on $H^i$
induced by $\mathrm{ob}_{\Xscr}$ (by \cite[Theorem 3.5]{deligne-illusie}
extended to stacks). Thus, it is enough to show that the map
$H^0(B\alpha_p,\mathrm{ob}_{B\alpha_p})$ is nonzero. For this, write
$L_{B\alpha_p/k} \simeq \mathcal{O}s \oplus \mathcal{O}u[-1]$ using the
generators found in Proposition~\ref{HodgeBalphap}. We must show that the
restriction of $\mathrm{ob}_{B\alpha_p}$ to the first factor $\mathcal{O}s$ is
nonzero. But the restriction of $\mathrm{ob}_{B\alpha_p}$ to the second factor
$\mathcal{O}u[-1]$ is $0$ by comparison with the analogous situation for the
liftable stack $B\mathbb{G}_a$. As $\mathrm{ob}_{B\alpha_p}$ was already
shown to be nonzero, the claim follows.
\end{remark} 

\begin{remark}
Combining Proposition~\ref{dRofHP} (or Remark~\ref{ConjBalphap}) with the approximation result in Theorem~\ref{thm:approximationintro} gives a large supply of examples of smooth projective surfaces in characteristic $p$ where both the Hodge-de Rham and conjugate spectral sequences fail to degenerate. 
\end{remark}

\begin{proposition}
\label{dRHpBalphap}
For all $p$, the de Rham--$\mathrm{HP}$ spectral sequence for $B \alpha_p$
degenerates. 
\end{proposition}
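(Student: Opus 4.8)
The plan is to run the de Rham--$\HP$ spectral sequence for $B\alpha_p$ and show that the weight grading coming from the $\Gm$-action leaves no room for any higher differential; the only external input is the computation of $H^*_{\dR}(B\alpha_p/k)$ together with its weights in Proposition~\ref{dRofHP}.

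First I would record that, as $B\alpha_p$ is smooth, the de Rham--$\HP$ spectral sequence is defined and begins at $E_2$ with $E_2^{s,t}=H^{s-t}_{\dR}(B\alpha_p/k)$, and that, since the motivic filtration on $\HP(-/k)$ is functorial in the stack, the $\Gm$-action on $B\alpha_p$ makes this a $\Gm$-equivariant spectral sequence, so every $d_r$ preserves weights (this is the Observation on gradings at the start of the subsection). Next, from Proposition~\ref{dRofHP} we have $H^*_{\dR}(B\alpha_p/k)\cong E(\alpha')\otimes P(\beta')$ with $\alpha'$ in degree $1$ and weight $p$ and $\beta'$ in degree $2$ and weight $p$; hence $H^n_{\dR}(B\alpha_p/k)$ is concentrated in the single weight $\lceil n/2\rceil\,p$, spanned by $(\beta')^{m}$ when $n=2m$ and by $\alpha'(\beta')^{m}$ when $n=2m+1$. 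Consequently every term $E_r^{s,t}$ with $r\geq 2$, being a subquotient of $H^{s-t}_{\dR}(B\alpha_p/k)$, is concentrated in weight $\lceil (s-t)/2\rceil\,p$.

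The remaining step is the bookkeeping. The differential $d_r\colon E_r^{s,t}\to E_r^{s+r,\,t+1-r}$ raises the de Rham degree $s-t$ by the odd number $2r-1$, so it would carry weight $\lceil (s-t)/2\rceil\,p$ to weight $\lceil ((s-t)+2r-1)/2\rceil\,p$. A one-line check on the parity of $s-t$ shows that these two weights differ by $rp$ when $s-t$ is even and by $(r-1)p$ when $s-t$ is odd, and in either case this is nonzero for every $r\geq 2$. As $d_r$ preserves weights, it follows that $d_r=0$ for all $r\geq 2$, i.e.\ the spectral sequence degenerates at $E_2$. The argument is uniform in $p$: for $p=2$ the Hodge cohomology of $B\alpha_p$ changes, but the de Rham cohomology computed in Proposition~\ref{dRofHP} does not, so the same count applies.

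I do not expect a genuine obstacle; the only delicate point is to get the ceiling arithmetic right in both parities --- equivalently, to keep in mind that $d_r$ shifts the de Rham degree by the \emph{odd} number $2r-1$, which is exactly what forces the $\Gm$-weight to jump. One could argue multiplicatively instead: $t$ with $|t|=2$ and weight $0$ is a permanent cycle pulled back from $\HP(k/k)$, and $\alpha',\beta'$ are the only other algebra generators, subject to the same weight obstruction; but the subquotient argument above is cleaner since it never uses the weight of $t$.
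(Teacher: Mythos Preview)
Your proof is correct and follows the same strategy as the paper: both arguments use the $\Gm$-weight grading on $H^*_{\dR}(B\alpha_p/k)$ from Proposition~\ref{dRofHP} to rule out all differentials. The paper phrases it multiplicatively (observing that $t$ is a permanent cycle coming from the circle and that $\alpha',\beta'$ are permanent for weight reasons), which is exactly the alternative you sketch in your final paragraph; your subquotient/ceiling computation is a slightly more explicit repackaging of the same obstruction.
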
 
\begin{proof} 
The $E_2$-term is given by 
$E(\alpha') \otimes P(\beta') \otimes P(t^{\pm 1})$, where 
$\alpha', \beta'$ are in 
\Cref{dRofHP}, and $|t| = 2$ has weight zero. 
Since $t$ comes from the cohomology of the circle, it is a permanent cycle. 
Since $\alpha', \beta'$ have weight $p$, one checks that $\alpha', \beta'$
are forced to be permanent cycles for weight reasons. Thus, there is no room for
differentials in the spectral sequence. 
\end{proof} 

\begin{remark} 
\label{twodifferentHP}
It follows that $\HP(B \alpha_p/k) \not\simeq \HH(B\alpha_p/k)^{tS^1}$. In
fact, the degenerate de Rham--$\mathrm{HP}$ spectral
sequence shows that $H^*(\mathrm{HP}(B\alpha_p/k))$ is uncountably dimensional in
each degree. However, since $\mathrm{HH}(B \alpha_p/k)$ is coconnective and
countably dimensional, it is easy to see that 
$\HH(B
\alpha_p/k)^{tS^1}$ is countably dimensional in each degree. 
\end{remark}

\begin{proposition} 
\label{cryscohbalphap}
    The crystalline cohomology of $B\alpha_p$ is given by
\[ H^*_\crys(B\alpha_p) \simeq W(k)[\beta']/p \beta',\]
where $|\beta'| = 2$.
\end{proposition}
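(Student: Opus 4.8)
The plan is to bootstrap from the derived de Rham computation of Proposition~\ref{dRofHP} together with the fact that crystalline cohomology is a $p$-adic lift of derived de Rham cohomology. First I would recall from Remark~\ref{dRcrys} and Theorem~\ref{BMS2descent}(4) that $R\Gamma_\crys(B\alpha_p) \in D(W(k))$ is derived $p$-complete and satisfies $R\Gamma_\crys(B\alpha_p)/p \simeq R\Gamma_{\dR}(B\alpha_p/k)$, whose cohomology is $E(\alpha') \otimes P(\beta')$ by Proposition~\ref{dRofHP}, with $\alpha'$ in degree $1$ and $\beta'$ in degree $2$. In particular $H^*_{\dR}(B\alpha_p/k)$ is one-dimensional over $k$ in each even degree $\geq 0$, one-dimensional in each odd degree $\geq 1$, and zero in negative degrees. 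The $\mathbf{G}_m$-action on $B\alpha_p$ equips $R\Gamma_\crys(B\alpha_p)$ with a weight grading refining the one on $R\Gamma_{\dR}$, and by Proposition~\ref{dRofHP} the class in de Rham degree $2n$ has weight $np$ (coming from $\beta'^{\,n}$) and the class in degree $2n+1$ has weight $np + p = (n+1)p$ (coming from $\alpha'\beta'^{\,n}$); all weights are positive multiples of $p$ except weight $0$ in degree $0$.

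Next I would run the reduction-mod-$p$ long exact sequence. For each $i$ there is an exact sequence
\[ 0 \to H^i_\crys(B\alpha_p)/p \to H^i_{\dR}(B\alpha_p/k) \to H^{i+1}_\crys(B\alpha_p)[p] \to 0, \]
where $M[p]$ denotes the $p$-torsion of $M$. The key structural input is that $B\alpha_p$ carries a multiplication-by-$n$ endomorphism for every integer $n$, which acts on $H^2_\crys(B\alpha_p)$ by multiplication by $n$ (as in the argument for Theorem~\ref{Bmuss}(5)): concretely, the generator $\beta'$ is the image of the first Chern-type class under a $\mathbf{Z}/p$-coefficient map, and multiplication by $n$ scales it by $n$; taking $n = p$ shows $p\beta' = 0$, so $H^2_\crys(B\alpha_p)$ is killed by $p$. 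I would then argue by induction on $i$, using multiplicativity of crystalline cohomology (the cup product $H^2_\crys \otimes H^{i}_\crys \to H^{i+2}_\crys$) together with the weight bookkeeping: since $H^{i}_\crys(B\alpha_p)/p \hookrightarrow H^{i}_{\dR}(B\alpha_p/k)$ is at most one-dimensional for each $i>0$ and sits in the weight where $\beta'$-powers live, and since $H^0_\crys = W(k)$ while $H^1_\crys$ must be a $p$-torsion group injecting (after $\otimes k$, via the boundary map) into a one-dimensional space, one deduces that $R\Gamma_\crys(B\alpha_p)$ has cohomology $W(k)$ in degree $0$, a cyclic $p$-torsion module generated by $\beta'^{\,n}$ in each positive even degree $2n$, and vanishes in odd degrees. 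This forces $H^*_\crys(B\alpha_p) \cong W(k)[\beta']/(p\beta')$ with $|\beta'| = 2$, which is the claim.

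The main obstacle I anticipate is pinning down that the odd crystalline cohomology groups vanish and that the even ones are genuinely cyclic $p$-torsion (rather than, say, $W(k)/p^2$ or something with extra $p$-divisibility), purely from the mod-$p$ reduction and the $\mathbf{G}_m$-weights. The clean way around this is the multiplication-by-$n$ trick: it shows every class in $H^{>0}_\crys(B\alpha_p)$ is simultaneously annihilated by $p$ (take $n=p$) \emph{and} scaled by arbitrary $n$, pinning the module structure down to $k$-vector spaces, and then the long exact sequence plus the dimension count from Proposition~\ref{dRofHP} forces the answer. One should double-check that the multiplication-by-$p$ endomorphism of $B\alpha_p$ (induced by $[p]\colon \alpha_p \to \alpha_p$, which is the zero map) indeed acts as multiplication by $p$ on $H^2_\crys$ and not in some subtler way — but this follows because $H^2_\crys(B\alpha_p)$ is generated by the Chern class of a line bundle (equivalently a character of $\alpha_p$), and Chern classes are additive under tensor product of line bundles, exactly as in the $B\mu_p$ argument.
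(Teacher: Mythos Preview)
Your overall strategy—lift the de Rham computation via the Bockstein sequence and use that $\alpha_p$ is annihilated by $p$—is exactly the paper's approach. However, your justification of the crucial step contains a genuine error. You claim that $[n]^*$ acts by $n$ on $H^2_\crys(B\alpha_p)$ because ``$H^2_\crys(B\alpha_p)$ is generated by the Chern class of a line bundle (equivalently a character of $\alpha_p$)'', by analogy with the $B\mu_p$ argument in Theorem~\ref{Bmuss}(5). But this is false for $\alpha_p$: one has $\mathrm{Hom}_{\mathrm{gp}}(\alpha_p,\mathbf{G}_m)(k) = \alpha_p^\vee(k) = \alpha_p(k) = 0$, so $\mathrm{Pic}(B\alpha_p) = 0$ and there is no nontrivial line bundle on $B\alpha_p$ whose Chern class could produce $\beta'$. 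The $B\mu_p$ argument does not transplant.

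The correct replacement for the Chern-class input is the Hopf algebra structure on $H^*_\crys(B\alpha_p)$ coming from the commutative group structure on $B\alpha_p$. Any class in $H^1$ is automatically primitive (there is no room for decomposables), so $[n]^* = n\cdot\mathrm{id}$ on $H^1$; since $[p]^* = 0$ on $H^{>0}$ (as $[p]$ factors through a point) and $H^1_\crys$ is $p$-torsion-free (from your Bockstein computation), this forces $H^1_\crys = 0$. Once $H^1 = 0$, every class in $H^2$ is primitive as well (the only possible cross-term would lie in $H^1\otimes H^1 = 0$), so $[p]^* = p\cdot\mathrm{id}$ on $H^2$ and hence $pH^2_\crys = 0$. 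From this point your Bockstein-and-multiplicativity induction goes through: $\beta'^{\,n}$ is a $p$-torsion element mapping to the generator of $H^{2n}_{\dR}$, so it generates $H^{2n}_\crys/p \simeq k$ and hence (by $p$-completeness) $H^{2n}_\crys \simeq k$; the odd groups vanish by the same primitivity argument.
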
 
\begin{proof} 
This follows from Proposition~\ref{dRofHP}, provided that we can show that
$H^2_\crys(B\alpha_p)$ is simple $p$-torsion. But this is clear as the group
scheme $\alpha_p$ is annihilated by $p$.
\end{proof} 

Let us collect everything we know.

\begin{theorem} 
    For $B\alpha_p$, the following assertions hold true.
\begin{enumerate}
    \item[{\rm (a)}] The HKR spectral sequence does not degenerate.
        There is a nonzero differential $d_p\colon H^0( B \alpha_p, L_{B\alpha_p}) \to H^p( B
        \alpha_p, \bigwedge^p L_{B\alpha_p})$. 
    \item[{\rm (b)}] The Hodge--de Rham spectral sequence does not degenerate:
        there is a nonzero $d_1\colon H^1( B \alpha_p, \mathcal{O}) \to H^1( B
        \alpha_p,L_{B\alpha_p})$. Similarly, the conjugate spectral sequence
        does not degenerate: there is a nonzero differential $d_2\colon H^0(B\alpha_p, L_{B\alpha_p/k}) \to H^2(B\alpha_p, \mathcal{O})$.
    \item[{\rm (c)}] The Tate spectral sequence for $\HP$ does not degenerate (there is already
        a nonzero $d_2$).
    \item[{\rm (d)}] The de Rham--$\HP$ spectral sequence degenerates.
    \item[{\rm (e)}] The crystalline--$\TP$ spectral sequence degenerates.
\end{enumerate}
\end{theorem}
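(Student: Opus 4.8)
My plan is to observe that every one of (a)--(e) has already been isolated in the preceding propositions and remarks, so the proof will consist of collecting those statements and recording, in each case, the surviving page of the relevant spectral sequence; no new computation is required.

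First, for (a), I would invoke Proposition~\ref{HHofBalphap}: its proof runs the HKR spectral sequence for $B\alpha_p$ starting from the Hodge cohomology computed in Proposition~\ref{HodgeBalphap}, and shows $d_p(s) = u^p$ up to a unit --- the input being that Frobenius on $B\alpha_p$ factors through a point, exactly as in Proposition~\ref{HHBmup}. This is the asserted nonzero differential $d_p\colon H^0(B\alpha_p, L_{B\alpha_p}) \to H^p(B\alpha_p, \bigwedge^p L_{B\alpha_p})$, valid for all $p$ (the $p=2$ variant of the $E_2$-page is recorded in Propositions~\ref{HodgeBalphap} and \ref{HHofBalphap}). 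For (b), I would cite the proof of Proposition~\ref{dRofHP}, which forces $d_1(\alpha) = u$ (up to a unit) in the Hodge--de Rham spectral sequence by the weight argument --- the abutment $H^*_{\dR}(B\alpha_p/k)$ has no classes in weights prime to $p$ --- and Remark~\ref{ConjBalphap}, which proves the conjugate-filtration statement $d_2(s) = \beta$ (up to a unit) by identifying this $d_2$ with $H^0$ of the lifting obstruction $\mathrm{ob}_{B\alpha_p}$ and checking the obstruction is nonzero, using that $B\alpha_p$ does not lift to $\mathbf{Z}/p^2$. These match the differentials displayed in (b). For (c), Proposition~\ref{HHofBalphap} shows the circle operator on $H^*(\HH(B\alpha_p/k))$ carries $\alpha$ to $u$ up to a unit; since this operator is, up to $2$-periodicity, the first differential of the Tate spectral sequence, the latter already has a nonzero $d_2$. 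Part (d) is Proposition~\ref{dRHpBalphap}.

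For (e), I would argue from Proposition~\ref{cryscohbalphap}: since $H^*_\crys(B\alpha_p) \simeq W(k)[\beta']/p\beta'$ with $|\beta'| = 2$, crystalline cohomology vanishes in odd degrees, so all nonzero terms on the $E_2$-page $E_2^{s,t} = H^{s-t}_\crys(B\alpha_p)$ of the crystalline--$\TP$ spectral sequence lie in bidegrees with $s-t$ even; as $d_r$ has bidegree $(r, 1-r)$ it changes $s-t$ by $2r-1$, so no differential can be nonzero and the spectral sequence degenerates.

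Since the substantive inputs --- the identifications $d_p(s) = u^p$, $d_1(\alpha) = u$, $d_2(s) = \beta$, and the circle operator $\alpha \mapsto u$ --- were all established in the earlier results, I do not expect any genuine obstacle here. The only things that demand (routine) care are the parity and weight bookkeeping that yields the degeneration statements (d) and (e), and keeping straight the minor differences in the $E_2$-pages at $p = 2$, both of which are already handled in the cited propositions.
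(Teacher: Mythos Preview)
Your proposal is correct and matches the paper's own proof essentially verbatim: the paper simply cites Propositions~\ref{HHofBalphap}, \ref{dRofHP}, \ref{dRHpBalphap}, \ref{cryscohbalphap} and Remark~\ref{ConjBalphap} for (a)--(d), and for (e) observes that by Proposition~\ref{cryscohbalphap} all $E_2$-terms live in even degrees so no differential can be nonzero. Your slightly more detailed bookkeeping (tracking the specific classes $s,u,\alpha,\beta$ and the bidegree parity for (e)) is fine and adds nothing new.
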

\begin{proof}
\begin{enumerate}
    \item[(a)] This was shown in the course of proving Proposition~\ref{HHofBalphap}.
    \item[(b)] This was shown in Proposition~\ref{dRofHP} and Remark~\ref{ConjBalphap}.
    \item[(c)] This was shown in Proposition~\ref{HHofBalphap}.
    \item[(d)] This was shown in Proposition~\ref{dRHpBalphap}.
    \item[(e)] All terms on the $E_2$-page live in even degrees
        (Proposition~\ref{cryscohbalphap}), so there are no differentials.
\end{enumerate}
\end{proof}

\section{A weak Lefschetz property}

Our main goal in this section is to verify a version of the weak Lefschetz
theorem for the Hodge cohomology of complete intersections in projective space (in arbitrary characteristic). In the
case of a smooth complete intersection, 
these results
are special cases of those
in~\cite[Expos\'e~XI.1.3]{sga72}. 
However, in the next section, it will be crucial to have the result for singular complete
intersections. 

For simplicity,
we work everywhere over a base field $k$. Unless specified
otherwise, cotangent complexes and de Rham cohomology are computed relative to
$k$.
\begin{definition}[Hodge $d$-equivalences]
    We say that a map of syntomic algebraic stacks $\Xscr\to\Yscr$ is a {\bf
	 Hodge $d$-equivalence} if, for each $s \geq 0$,
	 we have 
    $$\mathrm{cofib} \left(
	 R\Gamma(\Yscr,\bigwedge^s L_{\Yscr})\rightarrow R\Gamma(\Xscr,\bigwedge^s L_{\Xscr})
	 \right) \in 
	 D(k)^{\geq d-s}. $$
\end{definition}

\begin{remark}[Consequences in de Rham and crystalline cohomology]
    Say $\mathcal{X} \to \mathcal{Y}$ is a Hodge $d$-equivalence of syntomic
    stacks over a perfect field $k$ of characteristic $p$. Then the map
    $R\Gamma_{\dR}(\mathcal{X}) \to R\Gamma_{\dR}(\mathcal{Y})$ preserves the
    conjugate filtration, and each graded piece has cofiber in $D^{\geq d}(k)$
    by our assumption. It follows that the map $R\Gamma_{\dR}(\mathcal{X}) \to
    R\Gamma_{\dR}(\mathcal{Y})$ itself has cofiber in $D^{\geq d}(k)$. Passing
    to crystalline cohomology, this implies that  the cofiber $C$ of the map
    $R \Gamma_{\mathrm{crys}}(\Yscr) \rightarrow
    R\Gamma_{\mathrm{crys}}(\Xscr)$ is in $D(W(k))^{\geq d}$ and moreover
    $H^d(C)$ is $p$-torsion free.
\end{remark}

The main result of this section is the following result. 

\begin{proposition}
\label{Hodgedproj}
Let $X$ be a $d$-dimensional complete intersection in $\mathbb{P}^n_k$. 
Then the inclusion map $X \to \mathbb{P}^n_k$ is a Hodge $d$-equivalence. 
\end{proposition}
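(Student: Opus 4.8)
The plan is to induct on the codimension $c = n-d$, carrying along an auxiliary Bott-type vanishing statement as scaffolding. Write $S = k[x_0,\dots,x_n]$ and pick homogeneous forms $f_1,\dots,f_c$ of positive degrees $e_1,\dots,e_c$ cutting out $X$. Since $S$ is Cohen--Macaulay, the hypothesis that $X$ is a $d$-dimensional complete intersection forces $f_1,\dots,f_c$ to be a regular sequence; every initial segment $f_1,\dots,f_j$ is then also regular, so each $Y_j := V(f_1,\dots,f_j) \subset \mathbb{P}^n_k$ is a complete intersection of dimension $n-j$. Thus $Y_0 = \mathbb{P}^n$, $Y_c = X$, every $Y_j$ is lci (hence syntomic) over $k$, and $Y_j \hookrightarrow Y_{j-1}$ is an effective Cartier divisor cut out by $f_j$, so $L_{Y_j/Y_{j-1}} \simeq \mathcal{O}_{Y_j}(-e_j)[1]$.

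I would prove by induction on $j$ the conjunction of two statements: \textbf{(A$_j$)} the inclusion $Y_j \hookrightarrow \mathbb{P}^n$ is a Hodge $(n-j)$-equivalence; and \textbf{(B$_j$)} for every $s \geq 0$ and every $m > 0$, $H^\ell(Y_j, \wedge^s L_{Y_j/k}(-m)) = 0$ whenever $\ell < (n-j)-s$. The base case $j = 0$ is immediate: (A$_0$) is the identity map, and (B$_0$) is precisely Bott vanishing on $\mathbb{P}^n$. The Proposition is exactly the case $j = c$ of (A$_j$).

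For the inductive step $j-1 \to j$, abbreviate $Y := Y_{j-1}$, $Y' := Y_j$, $e := e_j$, and note $\dim Y = n-j+1 =: d'$, $\dim Y' = n-j =: d$. Two exact triangles on $Y$ do the work. Tensoring $\mathcal{O}_Y(-e) \xrightarrow{f_j} \mathcal{O}_Y \to \mathcal{O}_{Y'}$ with $\wedge^s L_{Y/k}$ and applying $R\Gamma$ (plus the projection formula) identifies $\mathrm{cofib}\big(R\Gamma(Y, \wedge^s L_{Y/k}) \to R\Gamma(Y', \wedge^s L_{Y/k}|_{Y'})\big) \simeq R\Gamma(Y, \wedge^s L_{Y/k}(-e))[1]$, and more generally computes $R\Gamma(Y', (\wedge^i L_{Y/k})|_{Y'}(-m'))$ out of $R\Gamma$ of the twists $\wedge^i L_{Y/k}(-m')$ and $\wedge^i L_{Y/k}(-m'-e)$ on $Y$. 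Meanwhile the transitivity triangle $L_{Y/k}|_{Y'} \to L_{Y'/k} \to \mathcal{O}_{Y'}(-e)[1]$, the standard filtration on the derived exterior power of a cofiber, and the shift identity $\wedge^a(\mathcal{O}_{Y'}(-e)[1]) \simeq \mathcal{O}_{Y'}(-ae)[a]$ together endow $\mathrm{cofib}(\wedge^s L_{Y/k}|_{Y'} \to \wedge^s L_{Y'/k})$ with a finite filtration whose graded pieces are $(\wedge^i L_{Y/k})|_{Y'}(-(s-i)e)[s-i]$ for $0 \leq i \leq s-1$. Now (B$_{j-1}$) furnishes exactly the vanishing needed: running these triangles against each graded piece of $\wedge^s L_{Y'/k}(-m)$ yields (B$_j$), while the same triangles show that $Y' \hookrightarrow Y$ is a Hodge $d$-equivalence, its two ``error terms'' landing in $D(k)^{\geq d-s}$ — the first by (B$_{j-1}$) applied with twist $e > 0$, the $i$-th graded piece of the second by (B$_{j-1}$) applied with twist $(s-i)e > 0$. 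Finally (A$_j$) follows by composing with (A$_{j-1}$), using that a Hodge $a$-equivalence followed by a Hodge $b$-equivalence is a Hodge $\min(a,b)$-equivalence, which is immediate from the triangle relating the cofiber of a composite to the cofibers of its factors.

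I expect the main obstacle to be conceptual rather than computational: one has to recognize that the bare weak Lefschetz statement does not reproduce itself through the induction and must be coupled with the twisted vanishing (B$_j$). Once (B$_j$) is carried along, the numerical inequalities close with no room to spare, powered solely by Bott vanishing at the base of the induction. What is left is bookkeeping — aligning the exterior-power filtration and all the degree shifts, and confirming that each $Y_j$ is genuinely lci so that the invariants of \S\ref{sec:invariants} and their formal properties apply — and this is routine.
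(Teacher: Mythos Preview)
Your proof is correct and follows essentially the same strategy as the paper: the paper packages your auxiliary vanishing $(B_j)$ as the notion of a ``Kodaira pair'' and proves a general Weak Lefschetz step (\Cref{weakL}) which is then iterated over the codimension (\Cref{maincorKAN}), using exactly the same two triangles (the divisor sequence and the conormal/transitivity sequence with the induced filtration on exterior powers). The only organizational difference is that the paper filters $i^*L_Y$ rather than $L_{Y'}$, so the rank-one conormal bundle gives a two-term cofiber sequence and the argument proceeds by an inner induction on the exterior-power degree, whereas you take the full filtration on $\wedge^s L_{Y'/k}$ and handle all graded pieces at once---but this is a cosmetic variation on the same idea.
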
 

The argument for 
\Cref{Hodgedproj} is based on an induction on the codimension and, in fact, it will be convenient
to prove a slightly stronger result (\Cref{maincorKAN}), based on the following two notions (which we
will need only for schemes). 

\begin{definition}[KAN $d$-equivalences]
Let $f\colon Y \to X$ be a map of syntomic $k$-schemes and let $\mathcal{L}$ be a line bundle
on $X$.  We say that $f$ is a {\bf KAN $d$-equivalence} if for each $s\geq 0$ and
    $r\geq 0$, 
    $$\mathrm{cofib} \left(
	 R\Gamma(X,\bigwedge^sL_X \otimes \mathcal{L}^{-r})\rightarrow R\Gamma(Y,\bigwedge^sL_Y
	 \otimes \mathcal{L}^{-r})
	 \right) \in D(k)^{\geq d-s}.$$
	 Taking $r=0$, we see that a KAN $d$-equivalence
is in particular a Hodge $d$-equivalence.
\end{definition}
\begin{definition}
    A {\bf Kodaira pair} is an $n$-dimensional $k$-scheme $Y$ and an ample line
	 bundle $\mathcal{L}$ such that
    $R\Gamma(Y,\bigwedge^sL_Y \otimes \mathcal{L}^{-r})\in D(k)^{\geq n-s}$ for all $s\geq 0$ and
    all $r>0$.\end{definition}

\begin{example}
\begin{enumerate}
    \item[(1)]
Projective space $\PP^n$ with any ample line bundle $\Oscr_{\PP^n}(h)$ (so that $h>0$)
    is a Kodaira pair.
\item[(2)]
        In characteristic zero, any smooth projective variety $Y$ with an ample line bundle $L$
    is a Kodaira pair, by the Kodaira--Nakano--Akizuki vanishing theorem (\cite[Corollary 2.11]{deligne-illusie}).
\end{enumerate}
\end{example}

\begin{proposition}[Weak Lefschetz] 
\label{weakL}
Let $(X, \mathcal{L})$ be a Kodaira pair of dimension $d$. If $i\colon H \to X$
is the inclusion of an effective 
Cartier divisor defined by a section of a positive power of $\mathcal{L}$, then
\begin{enumerate}
    \item[{\rm (i)}] the pair $(H, i^*\mathcal{L})$ is a Kodaira pair and
    \item[{\rm (ii)}] the inclusion $i\colon H \to X$ is a KAN $(d-1)$-equivalence.  
\end{enumerate}
\end{proposition}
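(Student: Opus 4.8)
The plan is to reduce everything to two standard exact triangles relating cohomology on $H$ and on $X$: the Koszul resolution of $i_*\mathcal{O}_H$, and the transitivity triangle for $i\colon H\to X$. Write $H=Z(\sigma)$ for a section $\sigma$ of $\mathcal{L}^m$ with $m>0$, so $\mathcal{I}_H\simeq\mathcal{L}^{-m}$, and since $i$ is an effective Cartier divisor (hence a regular immersion of codimension one, so $L_{H/X}$ is a shifted line bundle) we have $L_{H/X}\simeq(i^*\mathcal{L})^{-m}[1]$. Tensoring $\mathcal{L}^{-m}\to\mathcal{O}_X\to i_*\mathcal{O}_H$ with $\bigwedge^sL_X\otimes\mathcal{L}^{-r}$, using the projection formula together with the fact that $i^*$ commutes with derived exterior powers, and applying $R\Gamma(X,-)$, gives a triangle
\[ R\Gamma(X,\bigwedge^sL_X\otimes\mathcal{L}^{-r-m})\longrightarrow R\Gamma(X,\bigwedge^sL_X\otimes\mathcal{L}^{-r})\longrightarrow R\Gamma\big(H,\bigwedge^s(i^*L_X)\otimes(i^*\mathcal{L})^{-r}\big). \]
From this I would extract two bounds: (a) when $r>0$, both left-hand terms lie in $D(k)^{\geq d-s}$ by the Kodaira hypothesis for $(X,\mathcal{L})$ (note $r+m>0$ too), so $R\Gamma(H,\bigwedge^s(i^*L_X)\otimes(i^*\mathcal{L})^{-r})\in D(k)^{\geq d-1-s}$; (b) for every $r\geq 0$ the cofiber of the pullback map $R\Gamma(X,\bigwedge^sL_X\otimes\mathcal{L}^{-r})\to R\Gamma(H,\bigwedge^s(i^*L_X)\otimes(i^*\mathcal{L})^{-r})$ is $R\Gamma(X,\bigwedge^sL_X\otimes\mathcal{L}^{-r-m})[1]\in D(k)^{\geq d-1-s}$, again because $r+m>0$.

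Next I would compare $\bigwedge^sL_H$ with $\bigwedge^s(i^*L_X)$ using the transitivity triangle $i^*L_X\to L_H\to L_{H/X}$. Its derived exterior power yields a finite décalée filtration: the cofiber $Q_s:=\mathrm{cofib}(\bigwedge^s(i^*L_X)\to\bigwedge^sL_H)$ admits a filtration whose graded pieces are $\bigwedge^{s-j}(i^*L_X)\otimes\bigwedge^jL_{H/X}$ for $1\leq j\leq s$, and $\bigwedge^jL_{H/X}\simeq(i^*\mathcal{L})^{-jm}[j]$ by the décalage isomorphism $\bigwedge^j(M[1])\simeq M^{\otimes j}[j]$ for a line bundle $M$. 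Tensoring by $(i^*\mathcal{L})^{-r}$ and applying $R\Gamma(H,-)$, the $j$-th graded piece of $R\Gamma(H,Q_s\otimes(i^*\mathcal{L})^{-r})$ is $R\Gamma(H,\bigwedge^{s-j}(i^*L_X)\otimes(i^*\mathcal{L})^{-(jm+r)})[j]$; since $j\geq 1$ and $m>0$ the twist $jm+r$ is positive, so by bound (a) this lies in $D(k)^{\geq d-1-(s-j)}[-j]=D(k)^{\geq d-1-s}$. Hence $R\Gamma(H,Q_s\otimes(i^*\mathcal{L})^{-r})\in D(k)^{\geq d-1-s}$ for all $r\geq 0$.

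Both assertions now follow formally. For (i): $i^*\mathcal{L}$ is ample as $\mathcal{L}$ is, and for $r>0$ the object $R\Gamma(H,\bigwedge^sL_H\otimes(i^*\mathcal{L})^{-r})$ lies in a triangle between $R\Gamma(H,\bigwedge^s(i^*L_X)\otimes(i^*\mathcal{L})^{-r})$ (in $D(k)^{\geq d-1-s}$ by (a)) and $R\Gamma(H,Q_s\otimes(i^*\mathcal{L})^{-r})$ (in $D(k)^{\geq d-1-s}$ by the previous paragraph), hence it lies in $D(k)^{\geq d-1-s}$; so $(H,i^*\mathcal{L})$ is a Kodaira pair of dimension $d-1$. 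For (ii): factor the pullback map $R\Gamma(X,\bigwedge^sL_X\otimes\mathcal{L}^{-r})\to R\Gamma(H,\bigwedge^sL_H\otimes(i^*\mathcal{L})^{-r})$ through $R\Gamma(H,\bigwedge^s(i^*L_X)\otimes(i^*\mathcal{L})^{-r})$; the cofiber of the first arrow is in $D(k)^{\geq d-1-s}$ by (b), and the cofiber of the second is $R\Gamma(H,Q_s\otimes(i^*\mathcal{L})^{-r})\in D(k)^{\geq d-1-s}$, so the total cofiber is in $D(k)^{\geq d-1-s}$, i.e.\ $i$ is a KAN $(d-1)$-equivalence. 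The step that needs genuine care is identifying the derived exterior power of the transitivity triangle with the stated filtration and, in particular, the décalage computation of $\bigwedge^jL_{H/X}$; the remainder is connectivity bookkeeping, where the loss of one cohomological degree at each cofiber of a map between objects of $D(k)^{\geq n}$ is precisely what turns a $d$-equivalence on $X$ into a $(d-1)$-equivalence on $H$.
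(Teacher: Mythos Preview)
Your argument is correct and uses the same two ingredients as the paper---the Koszul sequence $\mathcal{L}^{-m}\to\mathcal{O}_X\to i_*\mathcal{O}_H$ and the transitivity/conormal triangle---but organizes them differently. The paper rotates the transitivity triangle to $\mathcal{O}_H(-H)\to i^*L_X\to L_H$ and filters $i^*\bigwedge^i L_X$; because $\mathcal{O}_H(-H)$ sits in degree $0$, its higher exterior powers vanish and one obtains a \emph{short} cofiber sequence $\bigwedge^{i-1}L_H(-H)\to i^*\bigwedge^iL_X\to\bigwedge^iL_H$. Controlling the first term requires the Kodaira bound for $\bigwedge^{i-1}L_H$, forcing an induction on $i$. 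You instead filter $\bigwedge^sL_H$ via $i^*L_X\to L_H\to L_{H/X}$; since $L_{H/X}$ is a \emph{shifted} line bundle, the d\'ecalage identity $\bigwedge^j(M[1])\simeq M^{\otimes j}[j]$ produces a longer filtration on $Q_s$, but every graded piece involves $\bigwedge^{s-j}(i^*L_X)$, which you bound directly from the Kodaira hypothesis on $X$ via the Koszul triangle. This lets you avoid the induction entirely, at the price of invoking d\'ecalage for derived exterior powers. (Your notation ``$D(k)^{\geq d-1-(s-j)}[-j]$'' is a little awkward---you mean that shifting an object of $D(k)^{\geq d-1-(s-j)}$ by $[j]$ lands in $D(k)^{\geq d-1-s}$---but the conclusion is right.)
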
 
\begin{proof} 
In the following, we write $\mathcal{O}(r) = \mathcal{L}^{ r}$  for simplicity. 
For each $i \geq 0$, we consider the statements $S_i$:  
\begin{enumerate}[(a)]
    \item  $R \Gamma(H, \bigwedge^i L_H(-r)) \in D^{\geq d-i-1}(k)$ for $r > 0$ and
    \item the map $R \Gamma(X, \bigwedge^i L_X(-r)) \to R \Gamma(H, \bigwedge^i L_H(-r))$ has
        cofiber in 
        $D^{\geq d-i-1}(k)$ for $r \geq 0$. 
\end{enumerate}
For all $i \geq 0$, the statements $S_i$ imply the result. 
We will prove $S_i$ by induction on $i$. 
Note that in the statement $S_i$, part (a) is actually a consequence of part (b) since
$(X, \mathcal{L})$ is a Kodaira pair; however,
it will be convenient to have part (a) marked separately. 

For $i = 0$, 
we use the cofiber sequence $\mathcal{O}_X(-H-r) \to \mathcal{O}_X(-r) \to
i_*\mathcal{O}_H(-r)$ for any $r \geq 0$; we get 
\[ \mathrm{cofib}( R \Gamma(X, \mathcal{O}_X(-r)) \to 
R \Gamma(H, \mathcal{O}_H(-r))) \we  R \Gamma( X, \mathcal{O}_X(-r-H))[1] \in 
D^{\geq d-1}(k)
\]
by our assumption that $(X, \mathcal{L})$ is a Kodaira pair and that $\mathcal{O}(-H) =
\mathcal{L}^{-t}$ for some $t > 0$. 
This implies $S_0$. 

For $i > 0$, we 
first consider the factorization of the map in question (for any $r \geq 0$), 
$$R \Gamma(X, \bigwedge^i L_X(-r)) \xrightarrow{f_{i, r}} 
R \Gamma(H, i^* \bigwedge^i L_X(-r)) \xrightarrow{g_{i,r}} R \Gamma(H,
\bigwedge^i L_H(-r)).$$
It suffices to see that each of these maps has cofiber in $D^{\geq d-i-1}(k)$. 
The first map $f_{i,r}$ has cofiber in 
$D^{\geq d-i-1}(k)$
via the cofiber sequence
\[  R \Gamma(X, \bigwedge^i L_X(-r-H)) \to R \Gamma(X, \bigwedge^i L_X(-r))
\xrightarrow{f_{i,r}} 
R \Gamma(H, i^* \bigwedge^i L_X(-r))\]
and our assumption that $(X, \mathcal{L})$ is a Kodaira pair. 
For the second map $g_{i,r}$, we  
use the conormal sequence 
$\mathcal{O}_H(-H) \to i^* L_Y \to L_H$
in $D(H)$ to regard $i^* L_Y$ as a (two-term) filtered object in $D(H)$.
We can take exterior powers to obtain 
a filtration on $\bigwedge^i i^* L_Y$ (cf. the proof of \cite[Prop.
25.2.4.1]{SAG}); because $\mathcal{O}_H(-H)$ has rank one,
this filtration degenerates to 
a cofiber sequence 
$ \bigwedge^{i-1} L_H(-H) \to i^* \bigwedge^i L_X \to \bigwedge^i L_H$.  
Twisting by $-r$ and taking global sections, we obtain 
a cofiber sequence
\[  
R \Gamma( H, i^* \bigwedge^i L_X(-r)) \xrightarrow{g_{i,r}}R\Gamma( H, \bigwedge^i L_H(-r))
\to R \Gamma(H,  \bigwedge^{i-1} L_H(-H-r))[1]. 
\]
Now part (a) of statement $S_{i-1}$
implies that the cofiber of $g_{i,r}$ belongs to $D^{\geq d-i-1}(k)$, as desired. 
This completes the proof of the statement $S_i$ and thus of the result. 
\end{proof}

\begin{corollary} 
Let $(X, \mathcal{L})$ be a Kodaira pair. 
Let $i\colon Y \hookrightarrow X$ be a $d$-dimensional complete intersection of sections of
powers of $\mathcal{L}$ (in
particular, those sections form a regular sequence). 
Then $(Y, i^* \mathcal{L})$ is a Kodaira pair and $i\colon Y \to X$ is a KAN $d$-equivalence
(in particular, a Hodge $d$-equivalence). 
\label{maincorKAN}
\end{corollary}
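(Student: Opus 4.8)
The plan is to induct on the codimension $c = \dim(X) - d$ of $Y$ in $X$, the case $c = 0$ being trivial (then $Y = X$ and $i$ is the identity). For $c \geq 1$, write $Y$ as the zero locus of a regular sequence of sections $s_1, \dots, s_c$ of positive powers $\mathcal{L}^{\otimes m_1}, \dots, \mathcal{L}^{\otimes m_c}$, and set $Y' := V(s_1, \dots, s_{c-1}) \subseteq X$. Since an initial segment of a regular sequence is again regular, $Y'$ is a complete intersection of sections of powers of $\mathcal{L}$ of codimension $c-1$, hence equidimensional of dimension $d+1$; moreover $Y'$ and $Y$ are syntomic $k$-schemes, being cut out by regular sequences in the syntomic scheme $X$, so ``Kodaira pair'' and ``KAN equivalence'' make sense for all three.

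First I would apply the inductive hypothesis to $i'\colon Y' \hookrightarrow X$: it gives that $(Y', i'^*\mathcal{L})$ is a Kodaira pair and that $i'$ is a KAN $(d+1)$-equivalence, hence in particular a KAN $d$-equivalence since $D(k)^{\geq (d+1)-s} \subseteq D(k)^{\geq d-s}$. Next, because $s_1, \dots, s_c$ is a regular sequence, the image of $s_c$ in $\mathcal{O}_{Y'}$ is a nonzerodivisor, so $j\colon Y \hookrightarrow Y'$ is the inclusion of an effective Cartier divisor defined by a section of the positive power $(i'^*\mathcal{L})^{\otimes m_c}$; as $(Y', i'^*\mathcal{L})$ is a Kodaira pair, all hypotheses of \Cref{weakL} are met. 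That proposition then yields that $(Y, i^*\mathcal{L})$ is a Kodaira pair and that $j$ is a KAN $d$-equivalence with respect to $i'^*\mathcal{L}$.

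It then remains to see that the composite $i = i' \circ j \colon Y \to X$ is a KAN $d$-equivalence with respect to $\mathcal{L}$. For fixed $s, r \geq 0$, one factors the comparison map $R\Gamma(X, \bigwedge^s L_X \otimes \mathcal{L}^{-r}) \to R\Gamma(Y, \bigwedge^s L_Y \otimes (i^*\mathcal{L})^{-r})$ through $R\Gamma(Y', \bigwedge^s L_{Y'} \otimes (i'^*\mathcal{L})^{-r})$, using $j^*(i'^*\mathcal{L}) \simeq i^*\mathcal{L}$. The cofiber of the first arrow lies in $D(k)^{\geq d-s}$ because $i'$ is a KAN $d$-equivalence, that of the second because $j$ is a KAN $d$-equivalence; since the three cofibers sit in a cofiber sequence and $D(k)^{\geq d-s}$ is closed under extensions, the cofiber of the composite lies in $D(k)^{\geq d-s}$ as well. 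As $s, r$ were arbitrary, $i$ is a KAN $d$-equivalence, and taking $r = 0$ recovers that it is a Hodge $d$-equivalence.

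The one place requiring genuine care is the peeling-off step: one must check that, after cutting by $s_1, \dots, s_{c-1}$, the remaining section $s_c$ still defines an \emph{effective Cartier divisor cut out by a power of the restricted ample bundle}, so that \Cref{weakL} applies verbatim. This is precisely where the regular-sequence hypothesis is used, and it is the reason the induction is run through the class of complete intersections of sections of powers of $\mathcal{L}$ rather than through Kodaira pairs alone. Everything else is routine bookkeeping with the connectivity bounds, which compose well exactly because $D(k)^{\geq d-s}$ is closed under extensions.
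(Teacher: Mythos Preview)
Your proof is correct and follows essentially the same approach as the paper: the paper's one-line argument is that the composite of a KAN-$m$-equivalence and a KAN-$n$-equivalence is a KAN-$\min(m,n)$-equivalence, so iterating \Cref{weakL} gives the result. You have simply unpacked this induction explicitly, including the composition lemma (your cofiber-sequence/extension argument) and the verification that each successive section still cuts out an effective Cartier divisor.
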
 
\begin{proof} 
Observe that the composite of a KAN-$m$-equivalence and a
KAN-$n$-equivalence is a KAN-$\mathrm{min}(m, n)$-equivalence. 
Therefore, the result follows by iteratively applying the weak Lefschetz (\Cref{weakL}). \end{proof}

\begin{example}[Smooth complete intersections]
    Let $X$ be a smooth complete intersection of dimension $d$ inside $\PP^n$
    and let $i\colon H\hookrightarrow X$ be a smooth hypersurface. 
	According to \Cref{maincorKAN}, $X, H$ are Kodaira pairs with respect to the
	line bundle $\mathcal{O}(1)$, 
and $i$ is a KAN $(d-1)$-equivalence. In fact, this is well-known: in characteristic zero, this follows from the Kodaira--Akizuki--Nakano vanishing theorem  (\cite[Corollary 2.11]{deligne-illusie}). In positive characteristic, one can use Deligne's computations to obtain the same result; see~\cite[Expos\'e~XI.1.3]{sga72}.
\end{example}
In the remainder of the section, we record two more basic properties of Hodge
cohomology and Hodge $d$-equivalences; these will be used essentially in the
next section.

\begin{proposition}[Preservation of Hodge $d$-equivalences]\label{prop:dproducts} 
\label{prop:quotient}
    Suppose that $X \to Y$ is a Hodge $d$-equivalence of syntomic $k$-schemes.
\begin{enumerate}
    \item[{\rm (1)}]  
For any  syntomic $k$-scheme $Z$, 
    $X \times_k Z \to Y \times_k Z$ is a Hodge $d$-equivalence. 
\item[{\rm (2)}]
If $G$ is  an affine  $k$-group scheme of finite type (and thus $G$ is syntomic) that acts on both $X$ and $Y$ equivariantly for the map $X \to Y$, then 
the map 
$[X/G] \to [Y/G]$
of quotient stacks is a Hodge $d$-equivalence. 
\end{enumerate}
    \end{proposition}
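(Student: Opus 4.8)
The strategy is to reduce both statements to the observation that Hodge cohomology of a syntomic $k$-scheme, and more generally the individual sheaves $\bigwedge^s L_{-/k}$, satisfy good base-change and descent properties which interact well with the coconnectivity bound defining a Hodge $d$-equivalence. For part (1), the key input is the K\"unneth-type decomposition for the cotangent complex: for syntomic $k$-schemes $W$ and $Z$ one has $L_{W \times_k Z/k} \simeq (\mathrm{pr}_1^* L_{W/k}) \oplus (\mathrm{pr}_2^* L_{Z/k})$, and taking derived exterior powers gives $\bigwedge^s L_{W\times_k Z/k} \simeq \bigoplus_{a+b = s} \mathrm{pr}_1^*\bigwedge^a L_{W/k} \otimes \mathrm{pr}_2^* \bigwedge^b L_{Z/k}$. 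Applying $R\Gamma(W \times_k Z, -)$ and using flat base change (all our schemes are flat over $k$, hence $Z$ is $k$-flat, so $R\Gamma(W\times_k Z, \mathrm{pr}_1^* \mathcal F \otimes \mathrm{pr}_2^* \mathcal G) \simeq R\Gamma(W, \mathcal F) \otimes_k R\Gamma(Z, \mathcal G)$), I get
\[
R\Gamma(W \times_k Z, \bigwedge^s L_{W\times_k Z/k}) \simeq \bigoplus_{a+b=s} R\Gamma(W, \bigwedge^a L_{W/k}) \otimes_k R\Gamma(Z, \bigwedge^b L_{Z/k}).
\]
Now with $W = X$ and $W = Y$ and comparing, the cofiber of the map on the $s$-th Hodge sheaf of $X \times_k Z \to Y \times_k Z$ is $\bigoplus_{a+b=s} \mathrm{cofib}\big(R\Gamma(Y, \bigwedge^a L_{Y/k}) \to R\Gamma(X, \bigwedge^a L_{X/k})\big) \otimes_k R\Gamma(Z, \bigwedge^b L_{Z/k})$. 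By hypothesis the $a$-th cofiber lies in $D(k)^{\geq d-a}$, and $R\Gamma(Z, \bigwedge^b L_{Z/k}) \in D(k)^{\geq 0}$ (it is the cohomology of a sheaf, hence connective in cohomological indexing, i.e. concentrated in degrees $\geq 0$); since $k$ is a field, $\otimes_k$ is exact and the tensor product lies in $D(k)^{\geq d-a} \supseteq D(k)^{\geq d-s}$. Summing over $a+b=s$ with $a \leq s$ keeps us in $D(k)^{\geq d-s}$, which is exactly the required bound.

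For part (2), I would use the bar resolution / simplicial description of the quotient stack: $[X/G] = \mathrm{colim}_{[n] \in \Delta^{\mathrm{op}}} (X \times G^{\times n})$ as a simplicial syntomic $k$-scheme (the \v{C}ech nerve of $X \to [X/G]$), and similarly for $[Y/G]$, with $X \times G^{\times n} \to Y \times G^{\times n}$ the map in level $n$. By Construction~\ref{cons:invariantsstacks} and Theorem~\ref{BMS2descent}, $R\Gamma([X/G], \bigwedge^s L_{[X/G]/k}) \simeq \mathrm{Tot}\big( n \mapsto R\Gamma(X \times G^{\times n}, \bigwedge^s L_{X \times G^{\times n}/k}) \big)$, using that the cotangent complex of the stack pulls back to the cotangent complex of each scheme in the cover (as noted in the remark on the stack-theoretic cotangent complex), and that derived exterior powers commute with pullback. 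Each level map $X \times G^{\times n} \to Y \times G^{\times n}$ is a Hodge $d$-equivalence by iterating part (1) (with $Z = G^{\times n}$). Hence the cofiber of the level-$n$ map on the $s$-th Hodge sheaf lies in $D(k)^{\geq d-s}$ for every $n$. Taking $\mathrm{Tot}$ is a limit over $\Delta^{\mathrm{op}}$, and a totalization of a cosimplicial object all of whose terms lie in $D(k)^{\geq d-s}$ again lies in $D(k)^{\geq d-s}$ (the connective-coconnective $t$-structure bound is preserved under totalizations in this direction). Therefore the cofiber of $R\Gamma([Y/G], \bigwedge^s L_{[Y/G]/k}) \to R\Gamma([X/G], \bigwedge^s L_{[X/G]/k})$, which is the totalization of the levelwise cofibers, lies in $D(k)^{\geq d-s}$, giving the claim.

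The step I expect to require the most care is confirming in part (2) that totalization preserves the bound $D(k)^{\geq d-s}$; this is a standard fact (the $t$-structure on $D(k)$ is compatible with limits in that $D(k)^{\geq m}$ is closed under limits, since $\tau^{< m}$ commutes with filtered colimits and the relevant homotopy-limit spectral sequence has $E_1$-terms vanishing in the right range), but one should record it cleanly, perhaps by invoking that $R\Gamma$ of the stack is computed by a cosimplicial object whose associated spectral sequence $H^i\big(\text{cofiber in level } j\big) \Rightarrow H^{i+j}(\text{total cofiber})$ has $E_1^{i,j} = 0$ for $i < d-s$. A minor subtlety in part (1) worth spelling out is the exactness of $-\otimes_k -$ since $k$ is a field (so there are no $\mathrm{Tor}$-corrections), which is what makes the coconnectivity estimate on the tensor product immediate; over a general base one would need flatness of $Z$ and a slightly more careful argument, but the hypothesis that everything is syntomic (hence $k$-flat) handles this.
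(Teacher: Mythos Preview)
Your overall strategy is the same as the paper's, and part~(2) is essentially identical to the paper's argument. However, there is a genuine error in your part~(1): you assert that $R\Gamma(Z,\bigwedge^b L_{Z/k}) \in D(k)^{\geq 0}$ because ``it is the cohomology of a sheaf''. This is false for a merely syntomic (rather than smooth) $k$-scheme $Z$: the cotangent complex $L_{Z/k}$ has Tor amplitude in $[-1,0]$, so $\bigwedge^b L_{Z/k}$ has Tor amplitude in $[-b,0]$ and is \emph{not} concentrated in degree $0$. The correct bound is $R\Gamma(Z,\bigwedge^b L_{Z/k}) \in D(k)^{\geq -b}$, which is exactly what the paper uses.

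Fortunately, this weaker bound is still sufficient: the tensor product then lands in $D(k)^{\geq (d-a)+(-b)} = D(k)^{\geq d-s}$ (since $a+b=s$), which is precisely the required estimate. So your argument is salvageable with a one-line fix, but as written the intermediate claim is wrong and your stated bound $D(k)^{\geq d-a}$ on the tensor product is too strong. This matters in the application: in the proof of Theorem~\ref{thm:approximationintro} one takes $Z = G^{\times n}$ for $G$ a non-smooth group scheme such as $\mu_p$ or $\alpha_p$, where $L_{Z/k}$ genuinely has a nonzero $H^{-1}$.
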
 

\begin{proof} 
Part (1) follows from the K\"unneth formula in Hodge cohomology; 
we have
    \[R\Gamma(X \times_k Z, \bigwedge^sL_{X\times_k Y}) 
    \simeq \bigoplus_{a + b = s}R\Gamma(X,\bigwedge^aL_X) \otimes_k R\Gamma( Z,
            \bigwedge^bL_Z), 
    \]
    and similarly for $Y \times_k Z$.
    Note now that $R\Gamma(Z,\bigwedge^bL_Z)$ belongs to $D(k)^{\geq -b}$
    because $Z$ is syntomic. Therefore, the map 
    \(R\Gamma(X, \bigwedge^aL_X) \otimes_k   R \Gamma( Z,
            \bigwedge^bL_Z)
    \to 
    R \Gamma(Y, \bigwedge^aL_Y) \otimes_k
    R \Gamma( Z, \bigwedge^bL_Z)
    \)
    has cofiber in $D(k)^{\geq d -  a- b}  = D(k)^{\geq d-s}$, as desired. 

For part (2), consider the resolution $\cdots\triplearrows G\times X\rightrightarrows X$ of the stack
    $[X/G]$ and similarly for $[Y/G]$. By hypothesis and Proposition~\ref{prop:dproducts}, 
    the induced map of cosimplicial objects $R\Gamma(G^\bullet\times_k
    Y,\bigwedge^sL_{G^\bullet\times_kY})\rightarrow R\Gamma(G^\bullet\times
    X,\bigwedge^sL_{G^\bullet\times_k X})$ has levelwise
    cofiber in $D(k)^{\geq d-s}$. Since $D(k)^{\geq d-s}$ is closed under limits
	 in $D(k)$, the result now follows by taking the limit. \end{proof} 

\begin{proposition}[Projective bundle formula] 
Let $\Xscr$ be a syntomic stack. 
Given an $n$-dimensional vector bundle $V$ over $\Xscr$, let $\Yscr$ be the
associated projective bundle over $\Xscr$. Then 
there exists a class $c_1 \in  H^1( \Yscr, L_{\Yscr})$  
such that 
$H^*( \Yscr, \bigwedge^{\ast} L_{\Yscr})$ is a free module over 
$H^*( \Xscr, \bigwedge^{\ast} L_{\Xscr})$ on $1, c_1, \dots,
c_1^{n-1}$. 
\label{prop:pbf}
\end{proposition}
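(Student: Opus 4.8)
The plan is to reduce the statement to the case of an affine base and then to the universal case, exploiting the syntomic-descent definition of Hodge cohomology of stacks. First I would recall that if $\pi\colon \Yscr \to \Xscr$ is the projective bundle $\mathbf{P}(V) \to \Xscr$, then the relative cotangent complex fits in the Euler sequence $0 \to \Omega^1_{\Yscr/\Xscr} \to \pi^*V^\vee(-1) \to \mathcal{O}_\Yscr \to 0$ (equivalently $L_{\Yscr/\Xscr} \simeq \Omega^1_{\Yscr/\Xscr}$ is a vector bundle of rank $n-1$), and the transitivity triangle $\pi^*L_{\Xscr} \to L_{\Yscr} \to L_{\Yscr/\Xscr}$ gives, after taking derived exterior powers, a finite filtration on $\bigwedge^s L_{\Yscr}$ whose graded pieces are $\pi^*\bigwedge^a L_{\Xscr} \otimes \bigwedge^{s-a} L_{\Yscr/\Xscr}$. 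So everything comes down to computing $R\pi_* \bigwedge^j L_{\Yscr/\Xscr}$ and assembling.

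The key computational input is that $R\pi_* \left( \bigwedge^j \Omega^1_{\Yscr/\Xscr} \right) \simeq \mathcal{O}_\Xscr$ for $j = 0$ and is acyclic for $0 < j \leq n-1$, together with the projective bundle formula $R\pi_* \mathcal{O}_\Yscr \simeq \mathcal{O}_\Xscr$ and, more generally, that $R\pi_*$ of the tautological line bundle $\mathcal{O}(1)$'s powers behave in the standard way. Concretely: $R\pi_* \mathcal{O}_{\Yscr} \simeq \mathcal{O}_\Xscr \oplus c_1 \mathcal{O}_\Xscr \oplus \cdots$ is wrong as stated — rather the point is that $H^*(\Yscr, \bigwedge^* L_\Yscr)$ acquires a basis $1, c_1, \dots, c_1^{n-1}$ over $H^*(\Xscr, \bigwedge^* L_\Xscr)$ once one knows the Leray–Hirsch-type splitting. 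To get this cleanly I would left Kan extend / reduce to affines: both sides are computed by syntomic descent from a syntomic affine cover $U \to \Xscr$, so it suffices to treat $\Xscr = \mathrm{Spec}(R)$ affine syntomic, and then (since $V$ is a vector bundle, Zariski-locally trivial, and Hodge cohomology satisfies Zariski descent) to treat $V$ trivial, i.e. $\Yscr = \mathbf{P}^{n-1}_R$. In that case $L_{\Yscr} \simeq L_{\mathbf{P}^{n-1}_R/R}$ (the relative cotangent bundle, since $R$ is not assumed smooth — here one uses transitivity and that $L_{\mathbf{P}^{n-1}_R/R}$ is a genuine bundle so the triangle with $\pi^*L_{R/k}$ splits off the base contribution). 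One then invokes the classical computation of $H^*(\mathbf{P}^{n-1}, \Omega^j_{\mathbf{P}^{n-1}})$: it is $k$ concentrated in degree $j$ for $0 \leq j \leq n-1$ and zero otherwise, with the degree-$j$ class being (a power of) the hyperplane class $c_1 \in H^1(\mathbf{P}^{n-1}, \Omega^1)$. Base-changing to $R$ and using the projection formula gives $H^*(\mathbf{P}^{n-1}_R, \bigwedge^* L_{\mathbf{P}^{n-1}_R/R}) \simeq R[c_1]/(c_1^n)$ as an $R$-module, with $c_1$ in bidegree $(1,1)$.

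From there I would define $c_1 \in H^1(\Yscr, L_\Yscr)$ globally as the first Chern class of $\mathcal{O}_\Yscr(1)$ — valued in $H^1(\Yscr, L_{\Yscr/\Xscr}) \hookrightarrow H^1(\Yscr, L_\Yscr)$ via $\mathrm{dlog}$ of the tautological bundle, or more robustly as the image of the tautological class under the map $H^1(\Yscr, \mathcal{O}^\times) \to H^1(\Yscr, L_\Yscr)$ — and observe that its restriction to each fiber $\mathbf{P}^{n-1}$ is the standard generator. The multiplicative structure on $H^*(\Yscr, \bigwedge^* L_\Yscr)$ (which exists because the HKR/Hodge-cohomology functor is lax symmetric monoidal and $\bigoplus_i \bigwedge^i L$ is a commutative algebra object) then lets me form the $H^*(\Xscr, \bigwedge^* L_\Xscr)$-module map $\bigoplus_{i=0}^{n-1} H^{*-i}(\Xscr, \bigwedge^{*-i} L_\Xscr) \cdot c_1^i \to H^*(\Yscr, \bigwedge^* L_\Yscr)$, and checking it is an isomorphism is local on $\Xscr$ — so it reduces to the trivialized affine case computed above. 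The main obstacle, and where I'd spend the most care, is the bookkeeping of the filtration on $\bigwedge^s L_\Yscr$ coming from the transitivity triangle: one must verify that it degenerates appropriately so that the associated spectral sequence (or the iterated cofiber sequences) collapses to give exactly the free module on $1, c_1, \dots, c_1^{n-1}$, with no extension problems — and that the $c_1$ defined intrinsically agrees with the class appearing in the Euler sequence computation. This is essentially the Leray–Hirsch argument, and it works because the fiberwise cohomology is free with the needed generators lifting globally; but since $\Xscr$ is only syntomic (not smooth) one has to be a little careful that all the relevant cotangent complexes of the total spaces split off their base parts, which they do precisely because $L_{\Yscr/\Xscr}$ is a vector bundle.
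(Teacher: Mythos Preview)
Your proposal is correct and ultimately follows the same route as the paper: define $c_1$ as the first Chern class of $\mathcal{O}(1)$ (the paper does this via the classifying map $\Yscr \to B\mathbb{G}_m$, you via $\mathrm{dlog}$, which amounts to the same thing), form the $H^*(\Xscr,\bigwedge^* L_\Xscr)$-module map sending $c_1^j$ to its image, and check it is an isomorphism locally on $\Xscr$, reducing to the classical computation for $\mathbf{P}^{n-1}_R$ with $R$ affine.

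The main difference is that the paper's proof is three sentences and goes directly to this comparison map, whereas you spend the first half setting up the Euler sequence and the transitivity-triangle filtration on $\bigwedge^s L_\Yscr$. That scaffolding is not needed: once the global class $c_1$ exists and you have written down the candidate map of complexes $\bigoplus_{j=0}^{n-1} R\Gamma(\Xscr,\bigwedge^{i-j} L_\Xscr)[-j] \to R\Gamma(\Yscr,\bigwedge^i L_\Yscr)$, being an equivalence is a property, so it is local on $\Xscr$ and reduces to the trivialized case with no filtration or extension problems to manage. Your stated ``main obstacle'' about degeneration and extensions is therefore a non-issue in the direct approach; you can simply delete that concern and the argument goes through.
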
 
\begin{proof} 
The class $c_1$ is the first Chern class (in Hodge cohomology) of the tautological line bundle
$\mathcal{O}(1)$ on $\Yscr$, which is defined via pullback from the
induced map $\Yscr \to B \mathbb{G}_m$ classifying $\mathcal{O}(1)$. 
The result then asserts that for each $i$, 
the map 
\[ \bigoplus_{j = 0}^{n-1} R \Gamma( \Xscr, \bigwedge^{i-j}
L_{\Xscr})[-j] \to R \Gamma(\Yscr, \bigwedge^i L_{\Yscr})
,\]
obtained  as multiplication by $c_1^j$ on the $j$th factor, is an equivalence. 
This assertion is local on $\Xscr$, whence we reduce to the case of
$\Xscr$ an affine scheme and $V$ a trivial bundle, for which the result is classical. 
\end{proof}

\section{Approximation of classifying spaces and failure of
HKR}\label{sec:approximation}

Using our study of Hodge $d$-equivalences from the previous section, we prove
Theorem~\ref{thm:approximationintro} from the introduction. The ideas here are
not new and go back to work of Serre~\cite{serre} and Totaro~\cite{totaro}; we
also use arguments from \cite{bms1}.

\begin{proof}[Proof of Theorem~\ref{thm:approximationintro}]
First, we assume that $G$ is a finite group scheme.
    We claim that   there is a finite dimensional representation
    $V$ of $G$ and a $d$-dimensional complete intersection $Z\subseteq\PP(V)$
    such that $Z$ is stable under the $G$-action, $G$ acts freely on $Z$, and
    $Z/G \simeq [Z/G]$ is smooth and projective. This is a standard argument involving an application of a Bertini-type theorem for the quotient variety $\PP(V)/G$, see, e.g., \cite[2.7-2.9]{bms1}.\footnote{The argument relies on Bertini-type
	 theorems; in case $k$ is finite, one can use
	 Bertini theorems in the form of \cite{poonen}.} 
    Having found $Z$, we note that $Z\hookrightarrow\PP(V)$ is a Hodge
    $d$-equivalence by Proposition~\ref{Hodgedproj}. Therefore, the induced map
    $Z/G \simeq [Z/G] \hookrightarrow [\PP(V)/G]$ on quotient stacks is a Hodge $d$-equivalence by
	 \Cref{prop:quotient}. The theorem now
    follows from Proposition~\ref{prop:pbf} by taking $X=Z/G$.

Now suppose $G$ is geometrically reductive. 
For each $r$, we let $G_r \subset G$ be the kernel of the $r$th Frobenius on
$G$. 
According to \cite[Cor. II.4.12]{Jantzen}, 
for any finite-dimensional $G$-representation 
$V$, we have that 
the cohomology groups $H^i(G, V), H^i(G_r, V)$ are finite-dimensional for $i \geq 0$, and 
$H^i(G, V) \simeq \varprojlim_r H^i(G_r, V)$. 
We claim that for all $i, j, r \geq 0$,  the vector spaces
$H^i(BG, \wedge^j L_{BG})$ and $H^i(  BG_r, \wedge^j, L_{BG_r})$ are
finite-dimensional, and 
\begin{equation} \label{limofFrobker} H^i(BG, \wedge^j L_{BG}) \simeq \varprojlim 
H^i(  BG_r, \wedge^j L_{BG_r}).\end{equation} By finite-dimensionality, this implies
that for any $i, j$, 
the map $H^i( BG, \wedge^j L_{BG}) \to H^i(BG_r, \wedge^j L_{BG_r})$ is
injective
for $r \gg 0$. From this, we reduce the case of reductive $G$ to finite $G$
treated above. 

To prove the claim \eqref{limofFrobker}, we observe that by functoriality we have maps 
(where each object belongs to 
the appropriate derived category, and the maps are compatible in the natural
sense)
$$\mathrm{coLie}(G) \to \dots \to \mathrm{coLie}(G_{r+1}) \to \mathrm{coLie}(
G_r) \to \dots \to \mathrm{coLie}(G_1). $$
By the description of the $G_r$ as Frobenius kernels,  we find that 
each of these maps 
is an isomorphism on $H^0$, and induces the zero map on $H^{-1}$. 
Taking rational cohomology, we find easily that 
\begin{equation} \label{invlimproj} \varprojlim R \Gamma( G_r, \mathrm{Sym}^i \mathrm{coLie}(G_r)) \simeq 
\varprojlim R \Gamma( G_r, H^0(\mathrm{Sym}^i \mathrm{coLie}(G_r))) 
= 
\varprojlim R \Gamma( G_r, \mathrm{Sym}^i \mathrm{coLie}(G))
, \end{equation}
and all cohomologies are finite-dimensional in each degree. Combining with 
\cite[Cor. II.4.12]{Jantzen} and the decomposition \eqref{BGhodgecoh}, we
conclude 
$R \Gamma( BG, \wedge^i L_{BG}) \simeq \varprojlim_r R \Gamma(BG, \wedge^i
L_{BG_r})$.
This yields 
\eqref{limofFrobker}, since 
finite-dimensionality prevents the existence of nonzero $\lim^1$ terms. 
\end{proof}

\begin{remark}
The proof of Theorem~\ref{thm:approximationintro} given above proves a slightly stronger statement in the case of finite group schemes $G$: we find approximations $X \to BG$ as in Theorem~\ref{thm:approximationintro} such that $H^*(X,\bigwedge^{*'}L_X)$ is free as a bigraded $H^*(BG,\bigwedge^{*'}L_{\B G})$-module in total degrees $*+*'\leq d$ on classes $c^i\in H^i(X,\bigwedge^iL_X)$ for $0\leq i\leq\lfloor\tfrac{d}{2}\rfloor$.
\end{remark}

We can prove now that several spectral sequences as explained earlier are
non-degenerate. 

\begin{proof}[Proof of Theorem~\ref{thm:intro1}]
    Choose $G=\alpha_p$ or $G=\mu_p$. In each case, we found that there is a
    nonzero differential $d_{p}\colon H^0(B G,\bigwedge^1 L_{BG})\rightarrow H^{p}(BG,\bigwedge^p L_{BG})$ in the HKR spectral
    sequence. If we choose $X\rightarrow BG$ as in
    Theorem~\ref{thm:approximationintro} to have
    dimension $2p$, then we find that the differential
    $d_{p}\colon H^0(X,\Omega^1_X)\rightarrow H^p(X,\Omega^p_X)$ is nonzero, as desired.
\end{proof}

\begin{theorem}\label{thm:other}
    Let $k$ be a perfect field of characteristic $p>0$.
    \begin{enumerate}
        \item[{\rm (a)}] There exists a smooth projective $2p$-fold such that the
            de Rham--$\HP$ spectral sequence does not degenerate.
        \item[{\rm (b)}] There exists a smooth projective variety such that
            the crystalline--$\TP$ spectral sequence does not degenerate.
        \item[{\rm (c)}] There exists a smooth projective variety $X$ such that
            the filtration on $H^*(\TP(X))$ arising from the crystalline--$\TP$
            spectral sequence is not split.
    \end{enumerate}
\end{theorem}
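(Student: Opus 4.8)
The plan is to deduce all three statements from the classifying-stack computations above together with the approximation result Theorem~\ref{thm:approximationintro}: for each part I would choose a map $f\colon X\to BG$ from a smooth projective $k$-scheme of suitable dimension, with $G=\mu_p$ for (a) and (c) and $G=\mu_p\times\mu_p$ for (b), and transfer the relevant non-degeneration (resp.\ non-splitting) along $f$ by naturality of the spectral sequences. The one point requiring care is that Theorem~\ref{thm:approximationintro} is phrased in Hodge cohomology, whereas (a) needs injectivity in de Rham cohomology and (b), (c) in crystalline cohomology. This is supplied by the observation that the $f$ produced in the proof of Theorem~\ref{thm:approximationintro} factors as a Hodge $d$-equivalence $X\hookrightarrow[\PP(V)/G]$ followed by a projective bundle $[\PP(V)/G]\to BG$, so that combining the Remark on the de Rham/crystalline consequences of a Hodge $d$-equivalence with (the de Rham and crystalline analogues of) the projective bundle formula Proposition~\ref{prop:pbf} shows that $H^i_{\dR}(BG/k)\to H^i_{\dR}(X/k)$ and $H^i_{\crys}(BG)\to H^i_{\crys}(X)$ are injective for $i\le d=\dim X$.

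For (a) I would take $d=2p$. The classes $t$ (coming from $S^1$) and $c$ (pulled back from $B\Gm$ along $X\to B\mu_p\to B\Gm$) are permanent cycles in the de Rham--$\HP$ spectral sequence of $X$ by naturality, and $d_p d=(\text{unit})\cdot c^p t^{\bullet}$ in the de Rham--$\HP$ spectral sequence of $B\mu_p$ by Theorem~\ref{Bmuss}(4). Since $d\in H^1_{\dR}(B\mu_p/k)$ and $c^p\in H^{2p}_{\dR}(B\mu_p/k)$ map to nonzero classes (the degrees $1$ and $2p$ being $\le 2p$), naturality shows that $f^*d$ is a permanent cycle up to the $E_p$-page of the de Rham--$\HP$ spectral sequence of $X$ and that $d_p(f^*d)=(\text{unit})\cdot f^*(c)^p\, t^{\bullet}\neq 0$; hence that spectral sequence does not degenerate, proving (a).

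For (b) the same scheme applies with $G=\mu_p\times\mu_p$, choosing $d$ large enough that the nonzero differential in the crystalline--$\TP$ spectral sequence of $B(\mu_p\times\mu_p)$ provided by Lemma~\ref{lem:mupmup} involves crystalline cohomology only in degrees $\le d$; naturality of the crystalline--$\TP$ spectral sequence then transfers this differential to $X$. For (c) I would reuse the $2p$-dimensional $X\to B\mu_p$ of part (a). By Theorem~\ref{Bmuss}(5), $H^*_{\crys}(B\mu_p)\cong W(k)[c]/(pc)$ with $|c|=2$, so (as $2\le 2p$) the class $c$ maps to a nonzero simple $p$-torsion class $\bar c\in H^2_{\crys}(X)$; being pulled back from $B\Gm$, $\bar c$ is a permanent cycle in the crystalline--$\TP$ spectral sequence of $X$, and it cannot be a boundary since a differential into the spot $E_r^{s,t}$ with $s-t=2$ would originate in $H^{3-2r}_{\crys}(X)=0$. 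Thus $\bar c$ survives to a nonzero $p$-torsion element of the associated graded of the crystalline--$\TP$ filtration on $H^*(\TP(X))$. On the other hand, $\TP(X)$ is $2$-periodic over $\TP(k)$ and $\TP(X)/p\simeq\HP(X/k)$, so $H^*(\TP(X))$ is $p$-torsion free as soon as $\HP(X/k)$ is concentrated in even degrees; granting this, a split filtration on $H^*(\TP(X))$ would exhibit $\bar c$ as an honest $p$-torsion element of the $p$-torsion-free group $H^*(\TP(X))$, a contradiction.

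The main obstacle is therefore the even-concentration of $\HP(X/k)$ used in (c). Here I would argue that, because $\dim X=2p$ is even, the underlying smooth complete intersection $Z\subset\PP(V)$ (with $X=[Z/\mu_p]$) has Hodge cohomology concentrated in even total degree — by weak Lefschetz (Proposition~\ref{Hodgedproj}), Bott vanishing on $\PP(V)$, and Poincar\'e duality, every nonzero $H^s(Z,\wedge^t L_Z)$ has $s+t$ even — so the de Rham--$\HP$ spectral sequence of $Z$ degenerates for parity reasons. Then, using that $\mu_p$ is connected and hence acts trivially on $R\Gamma_{\dR}(Z)$, a Leray--Hirsch description $H^*_{\dR}(X)\cong H^*_{\dR}(B\mu_p)\otimes_k H^*_{\dR}(Z)$ together with multiplicativity of the de Rham--$\HP$ spectral sequence of $X$ and the differential $d_p d=(\text{unit})c^p t^{\bullet}$ forces exactly the odd-degree de Rham classes of $X$ to die, leaving $\HP(X/k)$ in even degrees. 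Making the Leray--Hirsch step and the multiplicativity bookkeeping precise — in particular checking that the classes of $H^*_{\dR}(Z)$ inside $H^*_{\dR}(X)$ support no differentials before $d_p$ — is the delicate part; the remaining technical points (that the relevant classes genuinely survive to the page of the differential, and that the Hodge $d$-equivalences yield injectivity on de Rham and crystalline cohomology in the stated ranges) are handled as in part (a).
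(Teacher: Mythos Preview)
Your arguments for (a) and (b) are essentially the paper's, with the useful extra observation that the passage from the Hodge injectivity of Theorem~\ref{thm:approximationintro} to de Rham and crystalline injectivity runs through the factorization $X\hookrightarrow[\PP(V)/G]\to BG$, the Remark on de Rham/crystalline consequences of Hodge $d$-equivalences, and the projective bundle formula.

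Part (c), however, has a genuine gap. In the approximation $X\to B\mu_p$ produced by Theorem~\ref{thm:approximationintro}, the complete intersection $Z\subset\PP(V)$ with $X=[Z/\mu_p]$ is \emph{never} smooth: since $\mu_p$ acts freely, $Z\to X$ is a $\mu_p$-torsor over the smooth scheme $X$, hence fppf-locally of the form $\mu_p\times X$, which is non-reduced in characteristic $p$. Thus the Poincar\'e/Serre-duality step you invoke to force $H^s(Z,\wedge^t L_Z)$ into even total degree is unavailable. Worse, the Leray--Hirsch isomorphism $H^*_{\dR}(X)\cong H^*_{\dR}(B\mu_p)\otimes_k H^*_{\dR}(Z)$ is false on its face: the left side is finite-dimensional since $X$ is smooth proper, while $H^*_{\dR}(B\mu_p)\cong E(d)\otimes P(c)$ is already infinite-dimensional. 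Without the even concentration of $\HP(X/k)$, the torsion-freeness of $H^*(\TP(X))$ is unproven and your contradiction does not close.

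The paper avoids this entirely. It takes an approximation with $d>2p$ (rather than $d=2p$), so that $H^t_{\crys}(B\mu_p)\to H^t_{\crys}(X)$ is injective for all $t\le 2p$; then the pulled-back classes $f^*(c^i)$ for $0\le i\le p$ are permanent cycles in the spectral sequence for $X$, and the non-split extension among the corresponding filtration pieces for $B\mu_p$ (Theorem~\ref{Bmuss}(5)) is transported to $X$ by naturality of the filtered map $\TP(B\mu_p)\to\TP(X)$. No global computation of $\HP(X/k)$ or $\TP(X)$ is needed.
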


\begin{proof}
    For (a), let $X\rightarrow B\mu_p$ be a smooth projective approximation
    as in Theorem~\ref{thm:approximationintro} for $d=2p$, so
    $H^t(B\mu_p,\bigwedge^s L_{B\mu_p})\rightarrow H^t(X,\bigwedge^s L_X)$ is
    injective for $s+t\leq 2p$. By the results of Section~\ref{sec:bmup}, we
    see that in the de Rham--$\HP$ spectral sequence for $B\mu_p$, there is a
    nonzero differential    $d_p\colon H^1_{\mathrm{dR}}({B\mu_p})\rightarrow H^{2p}_{\mathrm{dR}}({B\mu_p})$.
    Thus, by naturality, there is also a nonzero differential in the de
    Rham--$\HP$ spectral sequence for $X$, as desired.

    Now, for part (b), by Lemma~\ref{lem:mupmup}, the crystalline--$\TP$ spectral
    sequence for $B(\mu_p\times\mu_p)$ does not degenerate. In particular,
    from the proof we see that
    there is some first nonzero differential
	 $d_r\colon H^1_{\mathrm{crys}}(B(\mu_p\times\mu_p))\rightarrow H^{2r}_{\mathrm{crys}}(B
	 ( \mu_p \times \mu_p))$.
    Choose a smooth projective approximation $X\rightarrow B(\mu_p\times\mu_p)$
    with $d=2r$. Then, the crystalline--$\TP$ spectral sequence does not
    degenerate for $X$.

    To prove part (c), we take a smooth projective approximation to
    $B\mu_p$ as in~\ref{thm:approximationintro} with $d > 2p$. Then,
    $H^t_{\mathrm{crys}}({B\mu_p})\rightarrow H^t_{\mathrm{crys}}(X)$ is injective for
    $t\leq 2p$. 
	In particular, in this range, the image of the classes in  
    $H^t_{\mathrm{crys}}({B\mu_p})$ in $H^t_{\mathrm{crys}}(X)$ are permanent
    cycles. Since the extensions in the spectral sequence are nontrivial for $B
    \mu_p$, they also must be nontrivial for $X$. 
\end{proof}

\begin{remark}[Obtaining liftable examples]
Take $G=\mu_p$ in the proof of Theorem~\ref{thm:intro1} given above. Since the
    group scheme $\mu_p$ admits a lift (even a unique one) to $W_2(k)$, one can
    show that the smooth projective variety $X$ used in the proof of Theorem~\ref{thm:intro1} (coming from Theorem~\ref{thm:approximationintro}) also lifts to $W_2(k)$, thus yielding a liftable example where the HKR spectral sequence does not degenerate. A similar remark applies to Theorem~\ref{thm:other}.
\end{remark}

\section{Hochschild homology and the $\dlog$ map}

The construction of Hochschild homology and the HKR spectral sequence allows for
a twisted version, via an Azumaya algebra or a class in the Brauer group. In this section, we 
describe this spectral sequence.

Throughout, we fix a base commutative ring $k$. 
The starting point is the following result from \cite{cortinas-weibel}, stating
that Hochschild homology groups cannot distinguish between Azumaya algebras over affine schemes. 

\begin{theorem}[Corti\~{n}as--Weibel~\cite{cortinas-weibel}] 
\label{CWthm}
Suppose $R$ is a $k$-algebra and $\mathcal{A}$ is an Azumaya $R$-algebra. 
Then there is a functorial (in $R, \mathcal{A}$) isomorphism  
of $H^*(\HH(R/k))$-modules, $H^*(\HH(\mathcal{A}/k))\simeq H^*(\HH(R/k))$. 
\end{theorem}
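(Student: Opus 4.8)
The plan is to reduce the claim to the relative statement $\HH(\mathcal{A}/R)\simeq R$ by means of a transitivity (base change) formula for Hochschild homology along the tower $k\to R\to\mathcal{A}$. So the first step is to produce a natural equivalence of $\HH(R/k)$-modules in $D(k)$,
\[ \HH(\mathcal{A}/k)\;\simeq\;\HH(R/k)\otimes^L_R\HH(\mathcal{A}/R),\]
functorial in the pair $(R,\mathcal{A})$; here $\HH(\mathcal{A}/k):=\mathcal{A}\otimes^L_{\mathcal{A}\otimes_k\mathcal{A}^{\op}}\mathcal{A}$ and $\HH(\mathcal{A}/R):=\mathcal{A}\otimes^L_{\mathcal{A}\otimes_R\mathcal{A}^{\op}}\mathcal{A}$. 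To get this I would use that $R$ is central in $\mathcal{A}$, so that the surjection $\mathcal{A}\otimes_k\mathcal{A}^{\op}\to\mathcal{A}\otimes_R\mathcal{A}^{\op}$ identifies its target with $(\mathcal{A}\otimes_k\mathcal{A}^{\op})\otimes_{R\otimes_k R}R$; extending scalars for the right $\mathcal{A}\otimes_k\mathcal{A}^{\op}$-module $\mathcal{A}$ along this map then gives $(\mathcal{A}\otimes_R\mathcal{A}^{\op})\otimes^L_{\mathcal{A}\otimes_k\mathcal{A}^{\op}}\mathcal{A}\simeq R\otimes^L_{R\otimes_k R}\mathcal{A}\simeq\HH(R/k)\otimes^L_R\mathcal{A}$, the last equivalence using that $\mathcal{A}$ is $R$-flat (it is finitely generated projective over $R$). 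Associativity of $\otimes^L$ then yields
\[ \HH(\mathcal{A}/k)\;=\;\mathcal{A}\otimes^L_{\mathcal{A}\otimes_k\mathcal{A}^{\op}}\mathcal{A}\;\simeq\;\mathcal{A}\otimes^L_{\mathcal{A}\otimes_R\mathcal{A}^{\op}}\bigl(\HH(R/k)\otimes^L_R\mathcal{A}\bigr)\;\simeq\;\HH(R/k)\otimes^L_R\HH(\mathcal{A}/R),\]
with the factor $\HH(R/k)$ carried along centrally; each identification is plainly natural in $(R,\mathcal{A})$ and $\HH(R/k)$-linear. (Alternatively one could invoke the standard base-change formula for a tower of rings, or phrase everything at the level of the $k$-linear categories $\Perf(\mathcal{A})$.)

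The second step is the classical input that $\HH(\mathcal{A}/R)\simeq R$, canonically and $R$-linearly, when $\mathcal{A}$ is Azumaya: $\mathcal{A}$ is then a separable $R$-algebra, hence projective as a module over $\mathcal{A}\otimes_R\mathcal{A}^{\op}$, so $\HH(\mathcal{A}/R)=\mathcal{A}\otimes_{\mathcal{A}\otimes_R\mathcal{A}^{\op}}\mathcal{A}$ is concentrated in degree $0$, where it is a copy of $R$ (induced by the reduced trace). Equivalently, the defining isomorphism $\mathcal{A}\otimes_R\mathcal{A}^{\op}\xrightarrow{\sim}\End_R(\mathcal{A})$ exhibits $\mathcal{A}\otimes_R\mathcal{A}^{\op}$ as Morita equivalent to $R$, with the bimodule $\mathcal{A}$ matching $R$. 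Combining the two steps,
\[ \HH(\mathcal{A}/k)\;\simeq\;\HH(R/k)\otimes^L_R\HH(\mathcal{A}/R)\;\simeq\;\HH(R/k)\otimes^L_R R\;\simeq\;\HH(R/k)\]
as $\HH(R/k)$-modules, naturally in $(R,\mathcal{A})$; taking cohomology gives the stated functorial isomorphism of graded $H^*(\HH(R/k))$-modules.

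I expect the only real work — more bookkeeping than obstacle — to lie in the first step: keeping track of the (bi)module structures through the change-of-rings maps, checking that $R$-flatness of $\mathcal{A}$ is the only flatness actually used (so that nothing need be assumed about $R$ over $k$, matching the generality of the statement), and verifying naturality carefully enough to conclude functoriality in $(R,\mathcal{A})$. As a substitute for the explicit formula, one can argue by {\'e}tale descent on $\Spec R$: $\HH(\mathcal{A}/k)$ is a complex of $R$-modules whose formation commutes with {\'e}tale base change, {\'e}tale-locally $\mathcal{A}\cong M_n(\mathcal{O})$ with $\HH(M_n(\mathcal{O})/k)\simeq\HH(\mathcal{O}/k)$ by Morita invariance, and these local identifications are independent of the chosen trivialization because inner automorphisms act trivially on Hochschild homology (Skolem--Noether); hence they glue to the desired global isomorphism.
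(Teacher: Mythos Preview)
The paper does not give its own proof of this theorem; it is stated with attribution to Corti\~{n}as--Weibel and used as input. Your main argument---the transitivity identification $\HH(\mathcal{A}/k)\simeq\HH(R/k)\otimes^L_R\HH(\mathcal{A}/R)$ together with $\HH(\mathcal{A}/R)\simeq R$ for $\mathcal{A}$ separable over $R$---is correct and is one of the standard routes to this result. Your \'etale-descent alternative is also correct, and is closer in spirit to what the paper actually does with the theorem: a few paragraphs later the paper \emph{normalizes} the isomorphism by choosing a generator of the rank-one $\mathcal{O}_X$-module $H^0(\ShHH)$ via the Dennis trace from \'etale-sheafified twisted $K$-theory (rather than the reduced trace you suggest), and this particular normalization is what allows the identification of the $d_2$ differential with $\dlog\alpha$.

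One caution worth making explicit. Your main argument produces, for each fixed $(R,\mathcal{A})$, an equivalence $\HH(\mathcal{A}/k)\simeq\HH(R/k)$ in $D(k)$, stronger than the homology-level statement asserted. Resist the temptation to regard this as homotopy-coherently functorial in a way that globalizes to schemes. The paper explicitly remarks that the isomorphism can be chosen functorially on cohomology groups but \emph{not} at the level of complexes; indeed, if it globalized, the twisted and untwisted HKR spectral sequences would coincide, contradicting the counterexamples constructed later in the paper. The issue is exactly the one you name in your last paragraph: the assertion that inner automorphisms act trivially on $\HH$ is true on homotopy groups, but the trivializing homotopies are not canonical, so the local equivalences glue on cohomology sheaves and not on the underlying sheaf of complexes. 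Your final sentence (``hence they glue to the desired global isomorphism'') should therefore be read as gluing on $H^*$, which is all the theorem claims.
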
 

\begin{corollary}[The twisted HKR theorem] 
\label{twistedHKRaffine}
Let $R$ be a smooth $k$-algebra and let $\mathcal{A}$ be an Azumaya $R$-algebra. 
Then there is a natural isomorphism 
$H^*(\HH(\mathcal{A}/k))\simeq \Omega^{-\ast}_{R/k}$.  
\end{corollary}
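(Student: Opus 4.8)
The plan is to deduce this immediately from the Corti\~{n}as--Weibel theorem (Theorem~\ref{CWthm}) together with the classical HKR isomorphism \eqref{HKRthm}; the corollary carries no content beyond combining these two inputs, the substance being entirely in Theorem~\ref{CWthm}.

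Concretely, I would first apply Theorem~\ref{CWthm} to the given data: $R$ is a $k$-algebra and $\mathcal{A}$ an Azumaya $R$-algebra (no smoothness is needed at this stage), so we obtain a functorial isomorphism of graded $H^*(\HH(R/k))$-modules
\[ H^*(\HH(\mathcal{A}/k)) \;\cong\; H^*(\HH(R/k)). \]
Next I would invoke smoothness of $R$ over $k$: a smooth $k$-algebra is in particular flat, so \eqref{HKRthm} applies and supplies canonical isomorphisms $H^{-n}(\HH(R/k)) \cong \Omega^n_{R/k}$ for all $n \geq 0$, natural in $R$, i.e.\ an identification $H^*(\HH(R/k)) \cong \Omega^{-*}_{R/k}$ of graded rings. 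Composing the two identifications yields the asserted natural isomorphism $H^*(\HH(\mathcal{A}/k)) \cong \Omega^{-*}_{R/k}$.

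The only point requiring a word of care is the bookkeeping of algebraic structures and naturality: Theorem~\ref{CWthm} produces a module isomorphism over the ring $H^*(\HH(R/k))$, which one transports along the multiplicative HKR isomorphism of that ring with $\Omega^{-*}_{R/k}$; since the HKR isomorphism is compatible with products this transport is routine, and functoriality in the pair $(R,\mathcal{A})$ is inherited directly from both cited statements. Thus I do not expect a genuine obstacle here; one should, however, resist reading more into the statement than is true, since (as Theorem~\ref{thm:intro2} will show) there is no analogue of this affine statement after globalizing to non-affine $X$.
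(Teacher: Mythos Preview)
Your proposal is correct and matches the paper's approach exactly: the corollary is stated in the paper without a separate proof, as it follows immediately from combining Theorem~\ref{CWthm} with the classical HKR isomorphism~\eqref{HKRthm}, precisely as you describe.
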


The isomorphism $H^*(\HH(\mathcal{A}/k))\simeq H^*(\HH(R/k))$ appearing in Corollary~\ref{twistedHKRaffine} can be chosen functorially at the level of cohomology groups, but not at the level of complexes. Globalizing, this leads to the following constructions. 

\begin{construction}[Twisted Hochschild homology] 
Let $X$ be a $k$-scheme and let $\mathcal{A}$ be an Azumaya
algebra over $X$ (i.e., a sheaf of Azumaya $\mathcal{O}_X$-algebras).
For each  \'etale map  from an affine, $\Spec(R)\to X$, we obtain an Azumaya
$R$-algebra $\mathcal{A}_R$, and can form the Hochschild homology $\HH(
\mathcal{A}_R/k)$. As $R$ varies, we  obtain an object of $D(X)$,
denoted $\ShHH$.
We write $\HH(\mathcal{A}/k)=R\Gamma(X,\ShHH)$ for the global sections of 
$\ShHH$. 
We call this construction the \textbf{$\mathcal{A}$-twisted
Hochschild homology of $X$}. 
\end{construction} 

\begin{construction}[The twisted HKR spectral sequence] 
Let $\mathcal{A}$ be an Azumaya algebra over the smooth $k$-scheme $X$. 
For each \'etale map 
$\Spec(R) \to X$, we  have functorial isomorphisms of Hochschild homology groups 
$H^*(\HH(\mathcal{A}_R/k)) \simeq \Omega^{\ast}_{R/k}$  by
\Cref{twistedHKRaffine}.  
Globalizing, we obtain a spectral sequence 
\begin{equation} \label{twistedHKRsseq} E_2^{s,t}=H^s( X, \Omega^{-t}_{X/k})
    \Rightarrow
H^{s+t}(\HH(\mathcal{A}/k)),  \end{equation}
the {\bf $\Ascr$-twisted HKR spectral sequence}.
\end{construction} 

The main result of this section (\Cref{prop:differential}) is an identification of the first differential
in this spectral sequence. In proving the result, we will also 
clarify the 
precise choice of isomorphism 
in \Cref{CWthm}. 

\begin{construction}[\'Etale twisted $K$-theory] 
Let $\mathcal{A}$ be an Azumaya algebra over the $k$-scheme $X$, representing a
Brauer class $\alpha$. 
We consider 
the \textbf{$\mathcal{A}$-twisted \'etale $K$-theory}
$\ShK$, defined as the \'etale sheafification over $X$ of $R \mapsto\K(
\mathcal{A}_R)$, where $\K(\mathcal{A}_R)$ denotes the  algebraic
$\K$-theory spectrum of $\mathcal{A}_R$. Since Hochschild homology has \'etale descent \cite{WG,
cortinas-weibel}, the object $\ShK$  is equipped with a trace map
(obtained by sheafifying the Dennis trace)
$$\ShK \to
\ShHH.$$  
\end{construction} 

\begin{construction}[The normalization of the isomorphism $\HH_*(R/k) \simeq
\HH_*(\mathcal{A} /k)$]
Let $\mathcal{A}$ be an Azumaya algebra over the $k$-scheme $X$. 
Here we construct explicitly the isomorphism of \Cref{CWthm}. 

According to the \'etale descent theorem \cite{WG, cortinas-weibel}, 
it follows that 
$H^*( \ShHH)$ define quasicoherent sheaves on the \'etale site of $X$, which
consequently have no higher cohomology. 
According to \cite[Sec. 5]{{antieau-cohomological}}, the \'etale sheafified homotopy
groups $\pi_i \ShK$ are canonically isomorphic to 
the untwisted sheafified homotopy groups
$\pi_i \mathbf{K}( -)$. 
In particular, we have a canonical isomorphism 
$\pi_0 \ShK \iso\mathbb{Z}$. The trace map thus gives a map $\mathbb{Z} \to \pi_0 ( \ShHH))$
which one checks \'etale locally (over which $\mathcal{A}$ is trivial) to  be a generator, i.e., to induce an isomorphism 
$H^*(( \mathbf{HH}(-/k)) \iso H^*( \ShHH)$. 
\end{construction} 

Now we identify the first differential in the $\Ascr$-twisted HKR spectral sequence. 
For this, we use the 
map $\mathrm{dlog}\colon \mathbb{G}_m \to \Omega^1_{(-)/k}$ of \'etale sheaves. 
In particular, for any scheme $X$, it defines
a map $H^i( X, \mathbb{G}_m) \to H^i (X,\Omega^1_{X/k})$. 
This map arises explicitly in the Dennis trace map. 
Namely, if $R$ is a smooth $k$-algebra, then the map 
\[ R^{\times} \to K_1(R) \to  H^{-1}(\HH(R/k))  \simeq \Omega^1_{R/k}\]
is given by $\mathrm{dlog}$, in view of \cite[Theorem 6.2.16]{Rosenberg}. 

\begin{proposition}[The first differential in twisted HKR]\label{prop:differential}
    Let $\Ascr$ be an Azumaya algebra on a $k$-scheme $X$ with class $\alpha \in
	 H^2(X, \mathbb{G}_m)$. Then,
    the differential $$d_2^\alpha\colon H^0(X,\Oscr_X)\rightarrow H^2(X,\Omega^1_{X/k})$$ in the
    $\Ascr$-twisted HKR spectral sequence for $\HH(\Ascr/k)$ sends $1$ to $\dlog\alpha$.
\end{proposition}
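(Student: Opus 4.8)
The plan is to reduce the computation of $d_2^\alpha(1)$ to a universal calculation near the generic point, using the naturality and multiplicativity of the twisted HKR spectral sequence together with the explicit normalization of the Corti\~nas--Weibel isomorphism recorded above. First I would reformulate the differential as a connecting map: the class $1 \in H^0(X,\Oscr_X)$ lifts to $\Fil^0_{\HKR}\HH(\Ascr/k)$, and $d_2^\alpha(1)$ is the obstruction to lifting it to $\Fil^1_{\HKR}$, measured in $\gr^1_{\HKR}\HH(\Ascr/k)[2] = R\Gamma(X,\Omega^1_{X/k})[1][2]$ after running one step of the spectral sequence — but more usefully, $d_2^\alpha$ is the boundary map associated to the two-stage filtration $\HH(\Ascr/k)/\Fil^2_{\HKR}$, which fits in a fiber sequence with layers $R\Gamma(X,\Oscr_X)$ and $R\Gamma(X,\Omega^1_{X/k})[1]$. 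So the content is: the class $1$, viewed in $H^0$ of the truncation $\tau^{[-1,0]}\HH(\Ascr/k)$, has a nonzero obstruction to splitting off, and that obstruction is $\dlog\alpha$.

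The key mechanism is the trace map $\ShK^{\et}(-)_{\Ascr} \to \ShHH(-)_{\Ascr}$ together with the fact, cited from \cite{antieau-cohomological}, that the \'etale-sheafified homotopy groups of twisted $K$-theory agree with the untwisted ones, so in particular $\pi_0\ShK^{\et}(-)_{\Ascr} \cong \ZZ$ canonically and the normalization of the Corti\~nas--Weibel isomorphism is forced by the trace of $1$. The point is that the \emph{twist} in the $K$-theory sheaf is detected by the first $k$-invariant: the twisted sheaf $\ShK^{\et}(-)_{\Ascr}$, truncated to $[\;\pi_0,\pi_1\;]$, is the extension of $\underline{\ZZ}$ by $\underline{\Gm}$ (using $\pi_1 \mathbf{K} \cong \Gm$ on the \'etale site) classified by the Brauer class $\alpha \in H^2_{\et}(X,\Gm) = \Ext^1_{\Shv(X_\et)}(\underline{\ZZ},\underline{\Gm}[1])$. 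This is essentially the standard description of how a gerbe twists $K$-theory; I would either cite it or prove it by descent from the trivial case. Now apply the trace map, which on $\pi_1$ induces $\dlog\colon \Gm \to \Omega^1_{(-)/k}$ by \cite[Theorem 6.2.16]{Rosenberg}, and on $\pi_0$ is the identity on $\underline{\ZZ}$ by the normalization above. Functoriality of the first $k$-invariant under the trace map then shows that $\tau^{[-1,0]}\ShHH(-)_{\Ascr}$ is the extension of $\underline{\ZZ}$ by $\underline{\Omega^1_{(-)/k}}$ classified by $\dlog\alpha \in H^2_{\et}(X,\Omega^1_{X/k})$; passing to global sections and comparing with the description of $d_2^\alpha$ as a boundary map yields the result. (Here one uses that $\Omega^1_{X/k}$ is quasicoherent, hence has no higher \'etale cohomology beyond Zariski, so the \'etale and Zariski computations of $H^2(X,\Omega^1_{X/k})$ agree and the spectral-sequence differential is indeed the map induced by this $k$-invariant.)

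The main obstacle I anticipate is making precise the claim that the twist of $\ShK^{\et}(-)_{\Ascr}$ by the Brauer class is \emph{exactly} $\alpha$ in the relevant $\Ext^1$, with the correct sign/normalization, rather than some universal multiple of it — this is where one must be careful about how the gerbe associated to $\Ascr$ acts on $K$-theory and how that action is recorded in the first $k$-invariant. I would handle this by working \'etale-locally: over an \'etale cover trivializing $\Ascr$, both sheaves are the constant (untwisted) ones, and the descent datum for $\ShK^{\et}(-)_{\Ascr}$ relative to this cover is conjugation by the transition cocycle of $\Ascr$, which acts trivially on $\pi_0$ and $\pi_1$ (inner automorphisms act trivially on $K$-theory homotopy groups); the resulting obstruction class lands precisely in $H^2$ and is computed from the $\mathrm{PGL}_n$-cocycle, i.e., it is the image of $\alpha$. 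Since the trace map is a map of \'etale sheaves compatible with these descent data, chasing the cocycle through $\dlog$ gives $\dlog\alpha$ on the nose. The remaining steps — identifying $d_2^\alpha$ with the boundary map of the two-step Postnikov/HKR truncation, and identifying that boundary map with the map induced by the $k$-invariant — are formal properties of filtered objects and Postnikov towers.
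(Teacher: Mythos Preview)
Your proposal is correct and follows essentially the same route as the paper: both use the trace map $\ShK \to \ShHH$, the identification of the trace on $\pi_1$ with $\dlog$, and the fact that the twist in \'etale $K$-theory is recorded by the class $\alpha$ in the first nontrivial differential/$k$-invariant, then push this through $\dlog$ by naturality. The only difference is packaging and economy: the paper phrases everything in terms of the map of \'etale descent spectral sequences (rather than Postnikov $k$-invariants, which is equivalent) and, crucially, does not re-derive the $K$-theory side but simply cites \cite{antieau-cech} for the statement that $d_2^\alpha(1) = \alpha$ in the twisted $K$-theory descent spectral sequence --- precisely the normalization issue you flag as the main obstacle.
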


\begin{proof}
    We have a natural trace map of \'etale sheaves $\ShK\rightarrow\ShHH$
    and hence an induced map of \'etale descent spectral sequences. The
    argument of~\cite[Proposition~5.1]{antieau-cohomological} provides
	 isomorphisms $\pi_i\ShK\iso\pi_i\mathbf{K}^\et(-)$ and \Cref{twistedHKRaffine}
	 provides isomorphisms
     $H^i(\ShHH)\iso H^i(\mathbf{HH}(-/k))$; by construction, these are
	 compatible with the $\alpha$-twisted and untwisted trace. Thus, $\pi_0\ShK\iso\ZZ$,
     $\pi_1\ShK\iso\Gm$, $H^0(\ShHH)\iso\Oscr_X$ and $H^{-1}(\ShHH)\iso\Omega^1_X$.
    Since the trace induces $\mathrm{dlog}$ in degree $1$, we have a commutative square $$\xymatrix{
    H^0(X,\ZZ)\ar[r]^{d_2^\alpha}\ar[d]&H^2(X,\Gm)\ar[d]^{\dlog}\\
    H^0(X,\Oscr_X)\ar[r]^{d_2^\alpha}&H^2(X,\Omega^1_X),
    }$$
    where $d_2^\alpha$ denotes the differential in the spectral sequences
    converging to the $\alpha$-twisted forms of \'etale $K$-theory and
    Hochschild homology. By the main result of~\cite{antieau-cech}, we have
    $d_2^\alpha(1)=\alpha$ for the top horizontal arrow. Since the left
    vertical arrow sends $1$ to $1$, the corollary follows.
\end{proof}

\begin{remark}
    Let $k$ be a perfect field of characteristic $p>0$.
    If $X$ is a smooth proper $k$-scheme and $\Ascr$ is an Azumaya algebra on
    $X$ with Brauer class $\alpha\in H^2(X,\Gm)$ such
    that $\dlog\alpha\neq 0$ in $H^2(X,\Omega^1_X)$, then the
    $\Ascr$-twisted HKR spectral sequence does not
    degenerate, contrary to what happens in the untwisted case when $\dim X\leq
    p$ (see~\cite{antieau-vezzosi}). In the next two sections, we will  find examples.
\end{remark}

\begin{remark}[Cohomological Brauer classes]
    The $\Ascr$-twisted Hochschild homology and HKR spectral sequence only depend on the Brauer class
    $\alpha\in H^2(X,\Gm)$, and one can define twisted Hochschild homology
	 purely in terms of $\alpha$ (even when it is not representable by an
	 Azumaya algebra). 
Indeed, one can construct twisted Hochschild homology as the Hochschild
homology of representing derived Azumaya algebras
\cite{Toen12, AG14}. 
	 Moreover, twisted Hochschild homology and
    the associated twisted HKR spectral sequence (along the lines of
    Definition~\ref{def:sss}(a)) exist for any class
    $\alpha\in H^2(\Xscr,\Gm)$ for any syntomic $k$-stack $\Xscr$.
\end{remark}

\section{$\PGL_n$}
\label{sec:PGLn}

We will give two different approaches to constructing counterexamples to the
degeneration of the twisted HKR spectral sequence as in
Theorem~\ref{thm:intro2}. The first, and most naive, is to take suitable
approximations to $B\PGL_p$. This method produces smooth projective $3$-folds.
Second, for $p=2$, we note that classical Enriques surfaces give examples. 
In this section, we describe the first approach. 

For the following argument, it will be convenient to use (very mildly) the language
of higher stacks: in particular, we will want to consider $B^2 \mathbb{G}_m$ as
a higher stack, and regard Hodge cohomology as sheaf cohomology. 
We briefly review this language below (in the form of sheaves of spaces). 
For simplicity, we will restrict to smooth schemes, since this is all we will
need henceforth. 

\begin{definition}[Sheaves of spaces or higher stacks] 
Let $k$ be a perfect field. 
Consider the category $\mathrm{Sm}_k^{\mathrm{aff}}$ of affine smooth $k$-schemes, equipped with the
\'etale topology. 
We can consider the $\infty$-category $\mathrm{Shv}( \mathrm{Sm}_k^{\mathrm{aff}})$ of sheaves of
spaces on $\mathrm{Sm}_k^{\mathrm{aff}}$; see \cite{HTT} for a detailed treatment of sheaves
of spaces. 
Since $\mathrm{Shv}( \mathrm{Sm}_k^{\mathrm{aff}})$ is an $\infty$-category, there is a
well-defined homotopy type $\mathrm{Map}_{\mathrm{Shv}( \mathrm{Sm}_k^{\mathrm{aff}})}(X, Y)$
for any two objects $X, Y \in \mathrm{Shv}( \mathrm{Sm}_k^{\mathrm{aff}})$. 
\end{definition} 

\begin{example}[Examples of higher stacks] 
    \begin{enumerate}[(a)]
    \item Any smooth algebraic stack $\Xscr$ over $k$ yields (and is determined by)
an object 
$\mathrm{Shv}( \mathrm{Sm}_k^{\mathrm{aff}})$ via the groupoid-valued functor of points. 
\item  
For any smooth commutative group scheme $G$ over $k$ and any $n \geq 0$, we obtain a higher stack
$K(G, n) \in \mathrm{Shv}( \mathrm{Sm}_k^{\mathrm{aff}})$. Explicitly, $K(G, n)$ is the \'etale sheafification of the functor 
which sends a smooth $k$-algebra $R$ to the Eilenberg--MacLane space $K(G(R), n)$. 
\item 
Fix $i \geq 0$. Consider the functor $\Omega^i$ which sends a smooth $k$-algebra $R$ to the
differential forms $\Omega^i_R$. 
For $n \geq 0$, we obtain a sheaf of spaces on $\mathrm{Sm}_k^{\mathrm{aff}}$, 
$K( \Omega^i, n) \in \mathrm{Shv}( \mathrm{Sm}_k^{\mathrm{aff}})$,
such that 
$K( \Omega^i, n)(R)$ is the Eilenberg--MacLane space 
$K( \Omega^i_R, n)$; here we do not need to \'etale sheafify further. 
\end{enumerate}
\end{example} 

The language of higher stacks will be 
useful for us because one has a good internal theory of cohomology. 

\begin{construction}[Higher group cohomology] 
Let $G$ be a smooth commutative $k$-group scheme. 
Given an object $T \in 
\mathrm{Shv}( \mathrm{Sm}_k^{\mathrm{aff}})$, we have the 
\textbf{cohomology groups} (for $n \geq 0$)
$$H^n( T, G) = \pi_0 \mathrm{Map}_{\mathrm{Shv}( \mathrm{Sm}_k^{\mathrm{aff}})}( T, K(G, n));$$
this agrees with the usual definition (\'etale cohomology) in case $T$ is
representable by a smooth $k$-scheme (cf. also \cite[Cor. 2.1.2.3]{SAG}).
By delooping, we can regard these as the cohomology groups of a complex $R
\Gamma(T, G)$. 
\end{construction} 

\begin{construction}[Hodge cohomology as sheaf cohomology] 
Let $\Xscr$ be a smooth algebraic stack over $k$, which we regard as an
object of $\mathrm{Shv}( \mathrm{Sm}_k^{\mathrm{aff}})$ as above. 
Then we have
natural isomorphisms
\[ H^n( \Xscr, \bigwedge^i L_{\Xscr}) = 
\pi_0 \mathrm{Map}_{\mathrm{Shv}( \mathrm{Sm}_k^{\mathrm{aff}})}( \Xscr, K( \Omega^i,
n)).
\]
\end{construction} 

\begin{example}[The degree two class on $B\PGL_n$] 
We have a natural map $B\PGL_n \to K(\mathbb{G}_m, 2)$. 
Explicitly, $B\mathbb{G}_m$ defines a commutative group object of 
$\mathrm{Shv}( \mathrm{Sm}_k^{\mathrm{aff}})$ acting centrally on $B\GL_n$;
$B\PGL_n$ is the quotient of $B\mathbb{G}_m$ acting on 
$B\GL_n$. We thus obtain a fiber sequence
$B\GL_n \to B\PGL_n \to K(\mathbb{G}_m, 2)$, i.e., a class in $H^2(B\PGL_n,
\mathbb{G}_m)$. 
\end{example} 

\begin{example}[The $\mathrm{dlog}$ map and its delooping] 
For any smooth $k$-algebra $R$, we have a homomorphism of abelian groups $\mathrm{dlog}\colon \mathbb{G}_m(R) \to \Omega_R^1$ given by $\mathrm{dlog}(f) = \frac{df}{f}$. Sheafifying and delooping twice yields a natural map 
$B^2 \mathrm{dlog}\colon K(\mathbb{G}_m, 2) \to K( \Omega^1, 2)$. 
\end{example} 

\begin{proposition} 
\label{notnullhomotopicdlog}
If $n$ is divisible by $p$,
then the composite map $B\PGL_n \to K( \mathbb{G}_m, 2)
\xrightarrow{B^2 \mathrm{dlog}} K(\Omega^1, 2)$ is
not nullhomotopic in $\mathrm{Shv}( \mathrm{Sm}_k^{\mathrm{aff}})$. 
\end{proposition}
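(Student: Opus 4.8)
The plan is to pin down the class that this composite represents in Hodge cohomology and then show it is nonzero by a short representation-theoretic argument; the argument will be uniform in all $n$ with $p\mid n$, with no need to reduce to $n=p$. First I would use the identification of Hodge cohomology with sheaf cohomology to view the composite $B\PGL_n\to K(\mathbb{G}_m,2)\xrightarrow{B^2\dlog}K(\Omega^1,2)$ as a class $c\in H^2(B\PGL_n,\bigwedge^1 L_{B\PGL_n})$, so that the proposition amounts to $c\neq 0$. Since $\PGL_n$ is smooth, $L_{B\PGL_n/k}\simeq\mathfrak{pgl}_n^\vee[-1]$ (the coadjoint representation, shifted), and hence $H^2(B\PGL_n,\bigwedge^1 L_{B\PGL_n})\cong H^1_{\mathrm{rat}}(\PGL_n,\mathfrak{pgl}_n^\vee)\cong\Ext^1_{\PGL_n}(\mathfrak{pgl}_n,k)$, with $\Ext$ taken in rational $\PGL_n$-representations.

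Next I would identify $c$, up to a unit, with the extension class of $0\to k\to\mathfrak{gl}_n\to\mathfrak{pgl}_n\to 0$ (conjugation action; $k$ the scalar matrices). The ingredients: (i) $B^2\dlog$ is the double delooping of the nonzero homomorphism $\dlog\colon\mathbb{G}_m\to\Omega^1$, hence represents a nonzero---therefore generating---class of $H^2(K(\mathbb{G}_m,2),\bigwedge^1 L)\cong H^0(K(\mathbb{G}_m,2),\mathcal{O})\cong k$, namely the identity of $\mathcal{O}$, using $L_{K(\mathbb{G}_m,2)/k}\simeq\mathcal{O}[-2]$ (obtained by delooping $L_{B\mathbb{G}_m/k}\simeq\mathcal{O}[-1]$); and (ii) the Brauer class $\alpha$ is the class of the central extension $1\to\mathbb{G}_m\to\GL_n\to\PGL_n\to1$, i.e. $\alpha\colon B\PGL_n\to K(\mathbb{G}_m,2)$ has homotopy fiber $B\GL_n$, so the transitivity triangle $\alpha^*L_{K(\mathbb{G}_m,2)/k}\to L_{B\PGL_n/k}\to L_{B\PGL_n/K(\mathbb{G}_m,2)}$ reads $\mathcal{O}[-2]\to\mathfrak{pgl}_n^\vee[-1]\to\mathfrak{gl}_n^\vee[-1]$, i.e. after a shift the exact sequence $0\to\mathfrak{pgl}_n^\vee\to\mathfrak{gl}_n^\vee\to k\to0$ dual to the one above. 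Since pulling a Hodge class back along $\alpha$ means pulling it back as a sheaf and then applying the canonical map $\alpha^*\bigwedge^1 L_{K(\mathbb{G}_m,2)}\to\bigwedge^1 L_{B\PGL_n}$---which for $\bigwedge^1$ is exactly the first arrow $\mathcal{O}[-2]\to\mathfrak{pgl}_n^\vee[-1]$ of this transitivity triangle---the image of the generator of (i) is precisely the extension class of (ii), as claimed.

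Finally I would check nonsplitness of $0\to k\to\mathfrak{gl}_n\to\mathfrak{pgl}_n\to0$ when $p\mid n$. A $\PGL_n$-equivariant splitting is the same as a conjugation-invariant linear functional $\ell\colon\mathfrak{gl}_n\to k$ with $\ell(\mathrm{id}_n)\neq 0$. The bilinear form $(A,B)\mapsto\mathrm{Tr}(AB)$ on $\mathfrak{gl}_n$ is perfect and $\GL_n$-invariant, so it identifies $\mathfrak{gl}_n\cong\mathfrak{gl}_n^\vee$ as representations; hence the space of invariant functionals is $\mathfrak{gl}_n^{\PGL_n}=k\cdot\mathrm{id}_n$ (Schur's lemma), corresponding under the form to $k\cdot\mathrm{Tr}$. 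As $\mathrm{Tr}(\mathrm{id}_n)=n=0$ in $k$ when $p\mid n$, no such $\ell$ exists, so the sequence is nonsplit and $c\neq0$.

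I expect step two to be the main obstacle: making rigorous that the abstract sheaf-cohomology class $c$ is the Lie-algebra extension class requires computing $L_{K(\mathbb{G}_m,2)/k}$ and the relative cotangent complex of $B\PGL_n\to K(\mathbb{G}_m,2)$, and fixing the normalization that $B^2\dlog$ is the standard Hodge-cohomology first Chern class on $K(\mathbb{G}_m,2)$; steps one and three are then routine. A more hands-on alternative, if one prefers to avoid higher stacks, is to restrict $\alpha$ along a map $B\Gamma\to B\PGL_n$ from the classifying stack of a suitable infinitesimal group scheme $\Gamma$ (for instance a Heisenberg-type central extension of $\alpha_p\times\alpha_p$ realized inside $\PGL_p$, whose preimage in $\GL_p$ is nonabelian and hence a nonsplit $\mathbb{G}_m$-extension) and to compute $\dlog$ of the resulting Brauer class directly from the Hodge cohomology of $B\alpha_p$ as in \Cref{HodgeBalphap}; but the representation-theoretic route above seems cleaner.
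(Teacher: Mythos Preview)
Your argument is correct and takes a genuinely different route from the paper's. The paper proves injectivity of $H^2(K(\mathbb{G}_m,2),\Omega^1)\to H^2(B\PGL_n,\Omega^1)$ directly: it resolves both sides via the bar construction for the $B\mathbb{G}_m$-action on $B\GL_n$ (resp.\ on a point), uses K\"unneth to turn $R\Gamma(-,\Omega^1)$ of each simplicial stack into a fiber sequence, and then computes that the map $R\Gamma(B\GL_n,\Omega^1)\to R\Gamma(B\mathbb{G}_m,\Omega^1)$ is multiplication by $n$ on $k[-1]$. This exhibits $R\Gamma(B\PGL_n,\Omega^1)$ as the two-term complex $k\xrightarrow{n}k$ in degrees $1,2$ and shows the comparison map is an isomorphism on $H^2$.

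You instead identify the pullback class intrinsically as the element of $\Ext^1_{\PGL_n}(\mathfrak{pgl}_n,k)$ corresponding to $0\to k\to\mathfrak{gl}_n\to\mathfrak{pgl}_n\to 0$, via the transitivity triangle for $B\PGL_n\to K(\mathbb{G}_m,2)$, and then check nonsplitness by the trace calculation. The two arguments meet at the same number: the paper's ``multiplication by $n$'' on $H^1(B\mathbb{G}_m,\Omega^1)$ is exactly your $\mathrm{Tr}(\mathrm{id}_n)=n$. What your approach buys is a transparent representation-theoretic interpretation of the class; what the paper's approach buys is that it sidesteps the cotangent-complex computations for the higher stack $K(\mathbb{G}_m,2)$ (your acknowledged step two), replacing them with explicit bar-resolution manipulations and the known Hodge cohomology of $B\GL_n$.
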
 
\begin{proof} 
The map $B^2 \mathrm{dlog}$ is clearly not nullhomotopic (since its double looping is
nonzero). Our claim is then that the map 
$H^2( K(\mathbb{G}_m, 2), \Omega^1) \to 
H^2( B\PGL_n, \Omega^1) = H^2( B\PGL_n, L_{B\PGL_n})$ is
injective. 
Indeed, the map $B\PGL_n \to K( \mathbb{G}_m, 2)$ is obtained from the map 
$B\GL_n \to \ast $ by taking homotopy orbits by $B \mathbb{G}_m$. 
Thus, we have a commutative diagram
of fiber sequences,
\begin{equation} \label{fibseq}\begin{gathered} \xymatrix{
B\mathbb{G}_m \ar[d]  \ar[r]^= &  B \mathbb{G}_m \ar[d]  \\
B\GL_n \ar[r] \ar[d]   &  \ast \ar[d]  \\
B\PGL_n \ar[r] &  K( \mathbb{G}_m, 2).
}\end{gathered} \end{equation}

Given smooth stacks $\Xscr, \Yscr$ with 
$R \Gamma( \Xscr, \mathcal{O}) = R \Gamma(\Yscr, \mathcal{O}) =k$, the K\"unneth
formula in Hodge cohomology 
yields $R \Gamma( \Xscr \times \Yscr, L_{\Xscr \times \Yscr}) = 
R \Gamma( \Xscr, L_{\Xscr}) \oplus R \Gamma(\Yscr, L_{\Yscr})$. 
The diagram of fiber sequences expresses $B\PGL_n$ as the geometric
realization of a simplicial stack 
given as the bar construction of $B \mathbb{G}_m$ acting on $B\GL_n$, i.e., 
$\cdots\triplearrows  B\GL_n \times B\mathbb{G}_m
\rightrightarrows B\GL_n$; similarly, $K( \mathbb{G}_m, 2)$ is the bar
construction of $B \mathbb{G}_m$ acting on a point. 
Applying the cochains functor $R\Gamma(\cdot, \Omega^1)$, which carries
these simplicial resolutions to totalizations, we find that 
these two observations imply that both vertical sequences in \eqref{fibseq}
are carried to fiber sequences. 
That is, 
\begin{gather*} R\Gamma(B\PGL_n, \Omega^1) 
\we \mathrm{fib} ( R \Gamma( B\GL_n, \Omega^1) \to 
R \Gamma( B\mathbb{G}_m, \Omega^1)), \\
R\Gamma(K(\mathbb{G}_m, 2), \Omega^1) 
\we \mathrm{fib} ( 0 \to 
R \Gamma( B\mathbb{G}_m, \Omega^1))
.\end{gather*}

Now we have the following computation in Hodge cohomology: $R \Gamma( B\GL_n,
\Omega^1) = R \Gamma( B\GL_n, L_{B\GL_n}) \simeq
k[-1]$; this is a classical calculation, and a modern reference for a much more general statement is \cite[Theorem 10.2]{totaro}. Furthermore, the diagonal map $B \mathbb{G}_m \to B\GL_n$ induces the map $k[-1]
\to k[-1]$ given by multiplication by $n$. This follows from the explicit
expression of Hodge cohomology given in this case (i.e., as analogous to
singular cohomology), cf. \cite[Theorem
4.1]{totaro}. 
We obtain that $R\Gamma(B\PGL_n, \Omega^1)$ is the 
two-term complex $k \stackrel{n}{\to} k$ in degrees $1$ and $2$ 
and 
$R\Gamma(K(\mathbb{G}_m, 2), \Omega^1)  = k[-2]$. 
By naturality of the above diagrams, we obtain that the map 
$R\Gamma(K(\mathbb{G}_m, 2), \Omega^1) \to 
R\Gamma(B\PGL_n, \Omega^1)$ is an isomorphism in $H^2$:
the generating classes in both sides arise by coboundary from $H^1(B
\mathbb{G}_m, \Omega^1)$. 
\end{proof} 

Now, we can prove Theorem~\ref{thm:intro2} from the introduction.

\begin{proof}[Proof of Theorem~\ref{thm:intro2}]
    Take a smooth projective $X$ with $d=3$ in
    Theorem~\ref{thm:approximationintro}
    for $G=\PGL_p$. According to  
    \Cref{notnullhomotopicdlog}, 
    the composite map $B\PGL_p \to K( \mathbb{G}_m, 2) \to K( \Omega^1, 2)$ is
    not nullhomotopic and defines a nontrivial class (indeed, the generator) of
    $H^2(B\PGL_p, L_{\PGL_p})$. The pullback of this class under the map $X \to\PGL_p$
    is nonzero by construction. Therefore, by naturality, 
    the pullback of the canonical class in $H^2(X,\Gm)$ maps via
    $\dlog$ to an exact order $p$ class in $H^2(X,\Omega^1_X)$. We are done by
	Proposition~\ref{prop:differential}: there is a nonzero $d_2$-differential in the twisted HKR
	spectral sequence.
\end{proof}

\section{Classical Enriques surfaces in characteristic $2$}

Let $k$ be an algebraically closed field of characteristic $2$.
For background on Enriques surfaces in positive characteristic,
see~\cite{bombieri-mumford-3} and~\cite[Section~II.7.3]{illusie-derham-witt}. A smooth proper surface $X$ over $k$ is {\bf
Enriques}
if its canonical class, $\omega_X$, is algebraically equivalent to zero and if
its second Betti number is $B_2=10$. An Enriques surface is {\bf classical} if
$H^1(X,\Oscr_X)=0$. In this case, $H^2(X,\Oscr_X)=0$, $\omega_X$ is not
trivial, and $\omega_X^{\otimes 2}\iso\Oscr_X$.

Recall that for smooth schemes over any perfect field of characteristic $p>0$ we have exact sequences of \'etale sheaves $$1\rightarrow
\Gm\xrightarrow{p^n}\Gm\rightarrow\nu_n(1)\rightarrow 1$$ for all $n$.
These induce short exact sequences
$$0\rightarrow\Br(X)/p^n\rightarrow H^2(X,\nu_n(1))\rightarrow H^3(X,\Gm)[p^n]\rightarrow
0$$ for each $n$.

\begin{lemma}
    If $X$ is a classical Enriques surface over an algebraically closed field
    of characteristic $2$, then $\Br(X)=\ZZ/2$. In
    particular, the nonzero class $\alpha\in\Br(X)$ defines a nonzero class in
    $H^2(X,\nu_n(1))$ for all $n\geq 1$.
\end{lemma}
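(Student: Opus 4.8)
The plan is to compute $\Br(X) = H^2_{\et}(X,\Gm)$ by separating the prime-to-$2$ torsion from the $2$-primary torsion, and then to read off the final clause formally from the exact sequence $0\to\Br(X)/2^n\to H^2(X,\nu_n(1))\to H^3(X,\Gm)[2^n]\to 0$ displayed just before the lemma.

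\emph{Prime-to-$2$ part.} As $X$ is a smooth proper surface over the algebraically closed field $k$, the group $\Br(X)$ is torsion, and for each prime $\ell\neq 2$ the $\ell$-adic Kummer sequence gives a short exact sequence
\[
0\to \Pic(X)\otimes\QQ_\ell/\ZZ_\ell \to H^2_{\et}(X,\QQ_\ell/\ZZ_\ell(1))\to \Br(X)[\ell^\infty]\to 0
\]
(using $\Pic^0_X=0$, so $\Pic(X)$ is the Néron--Severi group). An Enriques surface has $b_2(X)=10$ and Picard number $\rho(X)=10$; moreover a classical Enriques surface in characteristic $2$ has $\pi_1^{\et}(X)=0$, because its canonical double cover is the inseparable $\mu_2$-torsor rather than an étale cover (Bombieri--Mumford). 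Hence $\ell$-adic cohomology is torsion free and $H^3_{\et}(X,\ZZ_\ell)=0$, so $H^2_{\et}(X,\QQ_\ell/\ZZ_\ell(1))\cong(\QQ_\ell/\ZZ_\ell)^{10}$; the cycle class map out of $\Pic(X)\otimes\QQ_\ell/\ZZ_\ell\cong(\QQ_\ell/\ZZ_\ell)^{10}$ is injective, hence an isomorphism by duality, and therefore $\Br(X)[\ell^\infty]=0$ for all $\ell\neq 2$. Thus $\Br(X)$ is $2$-primary.

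\emph{$2$-primary part.} Here I would combine the displayed sequence with its degree-one analogue, both coming from $1\to\Gm\xrightarrow{2^n}\Gm\to\nu_n(1)\to 1$:
\[
0\to \Pic(X)/2^n \to H^1_{\et}(X,\nu_n(1))\to \Br(X)[2^n]\to 0,\qquad 0\to\Br(X)/2^n\to H^2_{\et}(X,\nu_n(1))\to H^3_{\et}(X,\Gm)[2^n]\to 0.
\]
The content is then the computation of the logarithmic de Rham--Witt cohomology $H^\ast_{\et}(X,\nu_n(1))=H^\ast(X,W_n\Omega^1_{X,\log})$, which I would carry out from the structure of the sheaves $\nu_n(1)$ (for $n=1$, the sequence $0\to\Omega^1_{X,\log}\to\mathcal{Z}^1_X\xrightarrow{1-C}\Omega^1_X\to 0$ with $\mathcal{Z}^1_X=\ker(d)$ and $C$ the Cartier operator, together with the action of Frobenius on the finite-dimensional cohomology groups) and from the coherent and crystalline cohomology of a classical Enriques surface in characteristic $2$ — in particular $H^1(X,\mathcal{O}_X)=H^2(X,\mathcal{O}_X)=0$, the fact that $\omega_X$ generates $\Pic(X)_{\tors}=\ZZ/2$, and the known $H^\ast(X,\Omega^j_X)$ and crystalline torsion of such surfaces (Bombieri--Mumford, Illusie, Lang); alternatively one may cite directly the computation of Brauer groups of Enriques surfaces in characteristic $2$ from the literature. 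The outcome I expect is that $H^1_{\et}(X,\nu_1(1))$ has $\mathbf{F}_2$-dimension exactly one more than $\dim_{\mathbf{F}_2}\Pic(X)/2$, that $\Br(X)$ has no element of order $4$, and that there is no $2$-divisible part (the Artin--Mazur formal Brauer group $\widehat{\Br}_X$ is trivial since $H^2(X,\mathcal{O}_X)=0$); together these force $\Br(X)[2^\infty]=\ZZ/2$, and with the previous paragraph $\Br(X)=\ZZ/2$.

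\emph{Final clause.} Since $\Br(X)=\ZZ/2$ is killed by $2$, the projection $\Br(X)\to\Br(X)/2^n$ is an isomorphism for every $n\geq 1$; hence the nonzero class $\alpha$ maps to the nonzero element of $\Br(X)/2^n$, which injects into $H^2_{\et}(X,\nu_n(1))$ by the displayed sequence, so $\alpha$ defines a nonzero class there. The main obstacle is the $2$-primary computation: unlike the prime-to-$\ell$ part it cannot be extracted from Betti and Picard numbers alone, and one must confront the characteristic-$2$ pathologies of Enriques surfaces — the non-reducedness of $\Pic^\tau_X$, the failure of $\mu_2$ to be an étale sheaf, and the subtleties of logarithmic de Rham--Witt cohomology — which is precisely why the hypothesis ``classical'' (rather than singular or supersingular) is used.
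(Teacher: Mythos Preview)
The paper's proof is a one-line citation: the assertion $\Br(X)=\ZZ/2$ is Cossec--Dolgachev, Corollary~5.7.1, and the second clause is left implicit (it follows immediately from the displayed exact sequence, exactly as you argue in your final paragraph). Your proposal is therefore strictly more ambitious than what the paper does.

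Your prime-to-$2$ argument is correct and self-contained: $b_1=0$ forces $H^1_{\et}(X,\ZZ_\ell)=0$, hence $H^3_{\et}(X,\ZZ_\ell)=0$ by Poincar\'e duality, so the Kummer sequence pins $\Br(X)[\ell^\infty]$ between two copies of $(\QQ_\ell/\ZZ_\ell)^{10}$, and an injection between such groups is an isomorphism. (A small quibble: the claim $\pi_1^{\et}(X)=0$ is stronger than you need and would itself require separate justification at $\ell=2$; it is cleaner to argue directly from $b_1=0$ and torsion-freeness of $H^1_{\et}(X,\ZZ_\ell)$.)

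For the $2$-primary part, however, your argument remains a sketch. You correctly identify the relevant inputs --- the logarithmic de Rham--Witt sequence $0\to\Omega^1_{X,\log}\to\mathcal{Z}^1_X\xrightarrow{1-C}\Omega^1_X\to 0$, the vanishing of the formal Brauer group from $H^2(X,\mathcal{O}_X)=0$, and the structure of $\Pic(X)_{\tors}$ --- but you do not actually carry out the computation of $H^1_{\et}(X,\nu_1(1))$ or exclude elements of order $4$; you only record what you ``expect'' the outcome to be. Your own fallback, ``alternatively one may cite directly the computation of Brauer groups of Enriques surfaces in characteristic~$2$ from the literature,'' is precisely what the paper does. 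So there is no error in your outline, but the genuinely hard step is not executed; in the end both your proposal and the paper defer the $2$-primary calculation to Cossec--Dolgachev.
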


\begin{proof}
    The first part is precisely~\cite[Corollary~5.7.1]{cossec-dolgachev}.
\end{proof}

\begin{proposition}\label{prop:enriques}
    Let $X$ be a classical Enriques surface over an algebraically closed field
    $k$, and let $\alpha\in\Br(X)$ denote the nonzero class. Then, $\dlog\alpha\neq
    0$ in $H^2(X,\Omega_X^1)$.
\end{proposition}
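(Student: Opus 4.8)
The plan is to exhibit $\dlog\alpha$ as the image, under an injective map, of the nonzero class produced by the preceding Lemma. First recall that $\nu_1(1)$ is canonically the subsheaf $\Omega^1_{X,\log}=\mathrm{im}(\dlog\colon\Gm\to\Omega^1_X)$ of $\Omega^1_X$: a unit $f$ with $df=0$ is, locally for the \'etale topology, a $p$-th power of a unit, so $\ker(\dlog)=\Gm^{p}$ and $\dlog$ identifies $\nu_1(1)=\Gm/\Gm^{p}$ with $\Omega^1_{X,\log}\subseteq\Omega^1_X$. Since $\dlog\colon\Gm\to\Omega^1_X$ factors as $\Gm\twoheadrightarrow\nu_1(1)\hookrightarrow\Omega^1_X$, the element $\dlog\alpha\in H^2(X,\Omega^1_X)$ is the image under the inclusion-induced map of the class $\overline\alpha\in H^2(X,\nu_1(1))$ coming from $\Br(X)/2\hookrightarrow H^2(X,\nu_1(1))$, and $\overline\alpha\neq0$ because $\Br(X)=\ZZ/2$ by the Lemma. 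Hence it suffices to show that $H^2(X,\Omega^1_{X,\log})\to H^2(X,\Omega^1_X)$ is injective, and I would prove this by factoring it through $H^2(X,Z^1_X)$, using $\Omega^1_{X,\log}\subseteq Z^1_X\subseteq\Omega^1_X$ where $Z^1_X=\ker(d\colon\Omega^1_X\to\Omega^2_X)$.

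The key computation is the vanishing $R\Gamma(X,B^2_X)=0$, where $B^2_X=d\Omega^1_X\subseteq\Omega^2_X$. Because $X$ is a classical Enriques surface we have $R\Gamma(X,\mathcal{O}_X)=k$ in degree $0$, whence by Serre duality $R\Gamma(X,\Omega^2_X)=R\Gamma(X,\omega_X)=k$ in degree $2$. Now apply the Cartier exact sequence $0\to B^2_X\to\Omega^2_X\xrightarrow{C}\Omega^2_X\to0$ of \'etale sheaves (valid on top forms of a smooth surface, $C$ the Cartier operator): its long exact sequence collapses to $H^i(X,B^2_X)=0$ for $i\neq2$ and to $0\to H^2(X,B^2_X)\to H^2(X,\Omega^2_X)\xrightarrow{C}H^2(X,\Omega^2_X)\to0$; since $C$ is a surjective additive endomorphism of the one-dimensional $k$-vector space $H^2(X,\Omega^2_X)$, semilinear over the inverse Frobenius of $k$, it is bijective, forcing $H^2(X,B^2_X)=0$ as well. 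Feeding $R\Gamma(X,B^2_X)=0$ into $0\to Z^1_X\to\Omega^1_X\xrightarrow{d}B^2_X\to0$ shows that the inclusion $\iota\colon Z^1_X\hookrightarrow\Omega^1_X$ induces isomorphisms on all cohomology groups; in particular $H^2(X,Z^1_X)\to H^2(X,\Omega^1_X)$ is injective.

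It then remains to show $H^2(X,\Omega^1_{X,\log})\to H^2(X,Z^1_X)$ is injective. Here I would use the Cartier-type exact sequence $0\to\Omega^1_{X,\log}\to Z^1_X\xrightarrow{1-C}\Omega^1_X\to0$ of \'etale sheaves, where $1$ denotes the inclusion $Z^1_X\hookrightarrow\Omega^1_X$; the obstruction to injectivity is the connecting map $H^1(X,\Omega^1_X)\to H^2(X,\Omega^1_{X,\log})$, which vanishes once $(1-C)_*\colon H^1(X,Z^1_X)\to H^1(X,\Omega^1_X)$ is surjective. Transporting along the isomorphism $\iota_*\colon H^1(X,Z^1_X)\xrightarrow{\sim}H^1(X,\Omega^1_X)$ from the previous step, this reduces to surjectivity of $\mathrm{id}-\psi$ on the finite-dimensional $k$-vector space $V=H^1(X,\Omega^1_X)$, where $\psi=C_*\circ\iota_*^{-1}$ is an additive endomorphism of $V$, semilinear over the inverse Frobenius. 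Over the algebraically closed field $k$ this is standard: writing $V=V_{\mathrm{bij}}\oplus V_{\mathrm{nil}}$ for the Dieudonn\'e decomposition, $\mathrm{id}-\psi$ is surjective on $V_{\mathrm{bij}}$ by the Lang/Artin--Schreier argument and invertible on $V_{\mathrm{nil}}$ by a finite geometric series. Composing the two injections gives $H^2(X,\Omega^1_{X,\log})\hookrightarrow H^2(X,\Omega^1_X)$, so $\dlog\alpha\neq0$, as desired.

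The one genuinely surface-specific input is the cohomology of $\mathcal{O}_X$ (equivalently, of $\omega_X$), which is exactly what makes $R\Gamma(X,B^2_X)$ vanish; everything else is formal once that is in hand. I expect the main effort to be bookkeeping rather than conceptual: pinning down that $\nu_1(1)$ is the subsheaf $\Omega^1_{X,\log}\subseteq\Omega^1_X$ and that the Lemma's class really maps to $\dlog\alpha$, getting the two Cartier exact sequences right (inclusion of closed forms versus Cartier operator), and observing that \'etale and Zariski cohomology coincide for the coherent sheaves $Z^1_X$, $B^2_X$, $\Omega^i_X$ so that the coherent-sheaf computations feed correctly into the \'etale statement about $\Omega^1_{X,\log}$.
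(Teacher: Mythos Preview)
Your argument is correct and gives a genuinely different, more elementary proof than the paper's. The paper proceeds via de Rham--Witt theory: it invokes Illusie's exact sequence
\[ 0 \to \varprojlim H^2(X,\nu_\bullet(1)) \xrightarrow{\dlog[-]} H^2(X,W\Omega^1_X) \xrightarrow{1-F} H^2(X,W\Omega^1_X) \to 0 \]
to inject the Brauer class into Hodge--Witt cohomology, and then uses the explicit computation (from Illusie) of the full Hodge--Witt table of a classical Enriques surface to deduce that the projection $H^2(X,W\Omega^1_X) \to H^2(X,\Omega^1_X)$ is an isomorphism. By contrast, you stay entirely at the mod-$p$ level with the Cartier operator and the two short exact sequences for $B^2_X$ and $\Omega^1_{X,\log}$; your only surface-specific input is $R\Gamma(X,\mathcal{O}_X)=k$, from which you extract $R\Gamma(X,B^2_X)=0$ and then run the semilinear Artin--Schreier/Fitting argument to get surjectivity of $1-C$ on $H^1$. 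Your route avoids de Rham--Witt and the citation of the Hodge--Witt computation entirely, at the cost of doing the $(1-\psi)$-surjectivity argument by hand; the paper's route is shorter to write down but relies on heavier imported machinery. Both approaches ultimately exploit the same vanishing $H^1(X,\mathcal{O}_X)=H^2(X,\mathcal{O}_X)=0$, just packaged differently.
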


\begin{proof}
    Because $k$ is algebraically closed and $X$ is a smooth projective surface, we have an exact sequence
    $$0\rightarrow \varprojlim H^2(X,\nu_\bullet(1)) \xrightarrow{d
	 \mathrm{log} ( [-])} H^2(X,W\Omega^1_X)\xrightarrow{1-F}H^2(X,W\Omega^1_X)\rightarrow
    0$$ by~\cite[5.22.5]{illusie-derham-witt}.

As $H^2(X,\mathcal{O}_X) = 0$ and $H^2(X, W\mathcal{O}_X) = 0$ (see Figures~\ref{fig:hodge} and~\ref{fig:hodge-witt} reproduced from~\cite[Section~II.7.3]{illusie-derham-witt}), we have a commutative diagram
    $$\xymatrix{
        H^3_{\crys}(X/W)\ar[r]\ar[d]&H^2(X,W\Omega^1_X)\ar[d]\\
        H^3_{\dR}(X/k)\ar[r]&H^2(X,\Omega^1_X),
    }$$ where the vertical maps are induced by killing $p$ in crystalline cohomology and the
    horizontal maps are the natural maps coming from the slope and Hodge filtrations. 
    All four maps in the diagram are isomorphisms for $X$ a classical Enriques surface (again, see Figures~\ref{fig:hodge} and \ref{fig:hodge-witt}). This shows that our class
    survives.
\end{proof}

Thus, the twisted HKR spectral sequence does not degenerate for the non-trivial
twist of a classical Enriques surface over an algebraically closed field of
characteristic $2$. More specifically, we can compute the Hochschild homology
exactly.

\begin{corollary}\label{cor:hh}
    Let $\Ascr$ be a geometrically non-trivial $2$-torsion Azumaya algebra on a classical Enriques
    surface $X$ in characteristic $2$. Then, $\HH(\Ascr/k)$ is discrete and
    $H^0(\HH(\Ascr/k))$ is a $12$-dimensional $k$-vector space.
\end{corollary}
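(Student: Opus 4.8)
The plan is to run the $\Ascr$-twisted HKR spectral sequence $E_2^{s,t} = H^s(X, \Omega^{-t}_{X/k}) \Rightarrow H^{s+t}(\HH(\Ascr/k))$ of \eqref{twistedHKRsseq} to completion. Since $\Ascr$ is geometrically non-trivial and $2$-torsion, its Brauer class $\alpha$ is the nonzero element of $\Br(X) \cong \mathbf{Z}/2$, so by Propositions~\ref{prop:differential} and \ref{prop:enriques} the differential $d_2^\alpha(1) = \dlog\alpha$ is a nonzero element $\delta \in H^2(X, \Omega^1_X)$. For the $E_2$-page I would record the Hodge cohomology of a classical Enriques surface: by definition $H^1(X, \mathcal{O}_X) = H^2(X, \mathcal{O}_X) = 0$, and since $\omega_X$ is a nontrivial torsion bundle also $H^0(X, \Omega^2_X) = H^1(X, \Omega^2_X) = 0$ while $H^2(X, \Omega^2_X) \cong k$ by duality; and by Figure~\ref{fig:hodge} one has $H^0(X, \Omega^1_X) \cong H^2(X, \Omega^1_X) \cong k$ and $H^1(X, \Omega^1_X) \cong k^{12}$. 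Thus the only nonzero groups on the $E_2$-page are $H^0(\mathcal{O}_X)$ and $H^2(\Omega^2_X)$ in total degree $0$, $H^0(\Omega^1_X)$ in total degree $-1$, $H^2(\Omega^1_X)$ in total degree $1$, and $H^1(\Omega^1_X) \cong k^{12}$ in total degree $0$.

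Next I would pin down every differential using multiplicativity. The twisted sheaf $\ShHH$ is a module over $\mathbf{HH}(-/k)$, and after the normalization of \Cref{twistedHKRaffine} the induced pairing of spectral sequences exhibits the twisted $E_2$-page as a free rank-one module over the untwisted HKR $E_2$-page on the trace class $1 \in H^0(X, \mathcal{O}_X)$. As $\dim X = 2 = p$, the untwisted HKR spectral sequence degenerates at $E_2$ by \cite{antieau-vezzosi}, so all of its differentials vanish; by the Leibniz rule the differential on the twisted page is therefore (up to sign) cup product with $\delta$. Since $H^2(X, \Omega^1_X)$ is one-dimensional and $\delta$ generates it, $d_2^\alpha\colon H^0(\mathcal{O}_X) \to H^2(\Omega^1_X)$ is an isomorphism; and $d_2^\alpha\colon H^0(\Omega^1_X) \to H^2(\Omega^2_X)$ is $\omega \mapsto \pm\,\omega \wedge \delta$, which is again an isomorphism, because on a smooth proper surface $(\Omega^1_X)^\vee \otimes \omega_X \cong \Omega^1_X$, so the cup-product pairing $H^0(X,\Omega^1_X) \otimes H^2(X,\Omega^1_X) \to H^2(X,\Omega^2_X) \cong k$ is perfect and both sides are one-dimensional.

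With the differentials known, the computation finishes by inspection: the pairs $H^0(\mathcal{O}_X) \cong H^2(\Omega^1_X)$ and $H^0(\Omega^1_X) \cong H^2(\Omega^2_X)$ cancel under $d_2^\alpha$, and every other differential vanishes for dimension reasons (e.g.\ $d_2$ out of $H^1(\Omega^1_X)$ lands in $H^3(X, \Omega^2_X) = 0$), so $E_3 = E_\infty$ is the single group $H^1(X, \Omega^1_X) \cong k^{12}$ in total degree $0$; completeness of the HKR filtration then gives that $\HH(\Ascr/k)$ is discrete with $H^0(\HH(\Ascr/k)) \cong k^{12}$. The step I expect to require the most care is the multiplicativity input: one must check that the twisted HKR spectral sequence really is a module over the untwisted one compatibly with the trace normalization of \Cref{twistedHKRaffine}, so that the single known differential $d_2^\alpha(1) = \dlog\alpha$ forces all the others; once this is granted, the rest is the Hodge diamond of a classical Enriques surface plus one application of Serre duality.
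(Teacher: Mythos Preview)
Your argument is correct but proceeds differently from the paper's. The paper does not use the module structure of the twisted HKR spectral sequence over the untwisted one. Instead, it invokes the self-duality of $\HH(\Ascr/k)$ (via the Mukai pairing, or more concretely because $2[\Ascr]=0$ forces $\Ascr$ to be Morita equivalent to $\Ascr^{\op}$), giving $H^i(\HH(\Ascr/k))\cong H^{-i}(\HH(\Ascr/k))$ for all $i$. From the Hodge diamond one reads off directly that $H^1(\HH(\Ascr/k))=0$ (the only contribution in total degree $1$ is $H^2(X,\Omega^1_X)$, already killed by $d_2^\alpha(1)=\dlog\alpha$), so by duality $H^{-1}(\HH(\Ascr/k))=0$ as well; this forces the remaining differential $d_2^\alpha\colon H^0(X,\Omega^1_X)\to H^2(X,\Omega^2_X)$ to be an isomorphism.

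The trade-off: the paper's route sidesteps the verification you flag as delicate---that the twisted spectral sequence really is a module over the untwisted one compatibly with the trace normalization---at the price of importing the Mukai pairing (or the Morita self-duality). Your route, by contrast, identifies every $d_2^\alpha$ explicitly as cup product with $\dlog\alpha$ and then uses Serre duality on $\Omega^1_X$ to see the second differential is an isomorphism; this is more hands-on and avoids any noncommutative duality input, but does require the multiplicativity statement. Both arguments ultimately rest on a duality (yours on Serre duality for $\Omega^1_X$, the paper's on self-duality of $\HH(\Ascr/k)$), so they are morally close; the paper's is slightly quicker once one accepts $H^i\cong H^{-i}$.
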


\begin{proof}
    We can assume that $k$ is algebraically closed. 
	We obtain that $\HH(\mathcal{A}/k)$ is self-dual, e.g, 
    via the Mukai pairing 
    \cite{Sh13}; here we can more explicitly see this because $\mathcal{A}$ is
    Morita equivalent to $\mathcal{A}^{op}$ (since $2[\mathcal{A}]=0$ in
    $\Br(X)$). In
    particular, for any integer $i$, we have
    $H^i(\HH(\Ascr/k))\iso H^{-i}(\HH(\Ascr/k))$. The differential
    $d_2^\alpha\colon H^0(X,\Omega^0_X)\rightarrow H^2(X,\Omega^1_X)$ is an
    isomorphism by Proposition~\ref{prop:differential},
    Proposition~\ref{prop:enriques}, and dimension considerations (see Figure~\ref{fig:hodge}). We see from Figure~\ref{fig:hodge} that
    $H^1(\HH(\Ascr/k))=0$. Thus, $H^{-1}(\HH(\Ascr/k))=0$ and thus the differential
    $d_2^\alpha\colon H^0(X,\Omega^1)\rightarrow H^2(X,\Omega^2_X)$ must be an
    isomorphism.
\end{proof}

\begin{figure}[h]
    \centering
    \begin{tabular}{c c c c}
        &   $\Omega_X^0$   &   $\Omega^1_X$   &   $\Omega_X^2$\\[0.5ex]
        $H^2$  &   $0$ & $k$ & $k$\\[0.5ex]
        $H^1$  &   $0$ &   $k^{12}$    & $0$\\[0.5ex]
        $H^0$  &   $k$ &   $k$ &   $0$
    \end{tabular}
    \caption{The Hodge cohomology of a classical Enriques surface reproduced
    from~\cite[7.3.8]{illusie-derham-witt}.}
    \label{fig:hodge}
\end{figure}

\begin{figure}[h]
    \centering
    \begin{tabular}{c c c c}
        &   $W\Omega_X^0$   &   $W\Omega^1_X$   &   $W\Omega_X^2$\\[0.5ex]
        $H^2$  &   $0$ & $k$ & $W$\\[0.5ex]
        $H^1$  &   $0$ &   $W^{10}\oplus k$    & $0$\\[0.5ex]
        $H^0$  &   $W$ &   $0$ &   $0$
    \end{tabular}
    \caption{The Hodge--Witt cohomology of a classical Enriques surface
    reproduced from~\cite[7.3.6]{illusie-derham-witt}.}
    \label{fig:hodge-witt}
\end{figure}

\section{The conic bundle over a classical Enriques surface}\label{sec:conic}

Let $X$ be a classical Enriques surface over an algebraically closed field $k$
of characteristic $2$. Let $\alpha\in\Br(X)\iso\ZZ/2$ be the nonzero class.
Since $X$ is a surface over an algebraically closed field,
$\ind(\alpha)=\per(\alpha)$ by \cite{dJ04}, so $\alpha$ is represented by a quaternion algebra
$D$ over the generic point. Since $X$ is a regular $2$-dimensional scheme, $D$
spreads out to a degree $2$ Azumaya algebra $\Ascr$ over $X$. Let $P\rightarrow
X$ be the Severi--Brauer scheme associated to $\Ascr$; it is a $\PP^1$-bundle
locally trivial in the \'etale topology.

\begin{calculation}
    There is a semiorthogonal decomposition
    $$\Perf(P)\we\langle\Perf(X),\Perf(\Ascr)\rangle.$$ Thus, we can compute
    the Hochschild homology of $P$ using additivity. (This is basically
    Quillen's argument from~\cite[Section~9]{quillen}, but see
    also~\cite{bernardara}.) 
	 By \cite{antieau-vezzosi}, the HKR spectral sequence degenerates for $X$,
	 so (by Figure~\ref{fig:hodge})
$H^i( \HH(X/k))$ vanishes for $i \notin \left\{-1, 0, 1\right\}$; it is
one-dimensional $i = \pm 1$ and $14$-dimensional for $i = 0$. 
	 By Corollary~\ref{cor:hh}, the $k$-vector spaces
    $H^i(\HH(P/k))$ vanish for $i\notin\{-1,0,1\}$. They are one dimensional for
    $i=\pm 1$. And, for $i=0$, it is a $26$-dimensional vector space.
\end{calculation}

\begin{calculation}
    Let $\pi \colon P\rightarrow X$ be the structure morphism. Then,
    $\pi^* L_X\rightarrow L_P\rightarrow L_\pi$ is exact. We find an exact sequence
    $$0\rightarrow \pi^*\Omega^1_X\rightarrow\Omega^1_P\rightarrow\omega_{P/X}\rightarrow
    0$$ of vector bundles. Hence, we have an equivalence
    $\pi^*\Omega_X^2\otimes_{\Oscr_P}\omega_{P/X}\we\Omega^3_P$.
\end{calculation}

We might hope based on the failure of twisted HKR that HKR also fails for $P$.
However, the next result proves that in fact the HKR spectral sequence
does degenerate.

\begin{theorem}
    For the conic bundle $P$ constructed above, the HKR spectral
    sequence degenerates at $E_2$.
\end{theorem}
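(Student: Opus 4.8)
The plan is to reduce degeneration at $E_2$ to a count of Hodge numbers, and then to compute the Hodge cohomology of $P$ through the conic bundle $\pi\colon P\to X$. Since $P$ is smooth and projective, all the vector spaces in sight are finite-dimensional, so the HKR spectral sequence---a bounded spectral sequence of finite-dimensional $k$-vector spaces converging to $H^*(\HH(P/k))$---degenerates at $E_2$ exactly when $\sum_{p,q}\dim_k H^q(P,\Omega^p_P)=\sum_m\dim_k H^m(\HH(P/k))$, and otherwise the left-hand side is strictly larger. By the calculation above (the semiorthogonal decomposition $\Perf(P)\simeq\langle\Perf(X),\Perf(\Ascr)\rangle$ and additivity of Hochschild homology, together with Corollary~\ref{cor:hh} and \cite{antieau-vezzosi} applied to $X$) the right-hand side is $1+26+1=28$. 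So it suffices to prove that the total Hodge number of $P$ is $28$.

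To compute $H^q(P,\Omega^p_P)$ I would use that $\pi$ is smooth and proper of relative dimension one, so that $R\pi_*\mathcal{O}_P\simeq\mathcal{O}_X$ and, by relative duality, $R\pi_*\omega_{P/X}\simeq\mathcal{O}_X[-1]$. Exterior powers of the relative cotangent sequence give short exact sequences $0\to\pi^*\Omega^p_X\to\Omega^p_P\to\pi^*\Omega^{p-1}_X\otimes\omega_{P/X}\to 0$ (legitimate since $\omega_{P/X}$ has rank one, with $\Omega^3_X=0$ as $X$ is a surface), so the projection formula yields $R\pi_*\Omega^3_P\simeq\Omega^2_X[-1]$ and, for $p=1,2$, a distinguished triangle in $D(X)$
\[
\Omega^p_X\longrightarrow R\pi_*\Omega^p_P\longrightarrow\Omega^{p-1}_X[-1]\xrightarrow{\ \ \delta_p\ \ }\Omega^p_X[1],
\]
with $\delta_p\in\Hom_{D(X)}(\Omega^{p-1}_X[-1],\Omega^p_X[1])$. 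Running the resulting long exact sequences with the Hodge numbers of $X$ from Figure~\ref{fig:hodge}, one finds $\sum_{p,q}\dim_k H^q(P,\Omega^p_P)=32$ when $\delta_1$ and $\delta_2$ both act by zero on cohomology, and that the total drops by $2$ for each of $\delta_1,\delta_2$ that acts nontrivially. Serre duality on the smooth projective threefold $P$ identifies $H^q(P,\Omega^2_P)\cong H^{3-q}(P,\Omega^1_P)^\vee$, so $\delta_2$ has nontrivial effect as soon as $\delta_1$ does. Hence the total Hodge number is $32-2-2=28$ precisely when $\delta_1\neq0$, and the theorem is in fact \emph{equivalent} to the single assertion $\delta_1\neq 0$.

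I claim that $\delta_1\in\Hom_{D(X)}(\mathcal{O}_X[-1],\Omega^1_X[1])=H^2(X,\Omega^1_X)$ equals $\dlog\alpha$ up to a nonzero scalar; granting this, $\delta_1\neq 0$ by Proposition~\ref{prop:enriques}, so the HKR spectral sequence degenerates at $E_2$. This is the expected answer structurally: were $\delta_1$ to vanish, the nonzero twisted differential $d_2^\alpha$ of Proposition~\ref{prop:differential}, carried by the summand $\HH(\Ascr/k)$ of $\HH(P/k)$, would appear as a nonzero differential in the HKR spectral sequence of $P$, and the nonvanishing of $\delta_1$ is exactly the mechanism that restores degeneration here.

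To prove the claim I would compute $\delta_1$ \'etale-locally: on a cover $\{U_i\}$ of $X$ trivializing $\Ascr$ one has $P|_{U_i}\cong\PP^1\times U_i$ and the relative cotangent sequence splits, while the discrepancy between two such splittings over $U_{ij}$ is governed by the line bundles $L_{ij}$ whose coboundary $L_{ij}L_{jk}L_{ki}$ is a \v{C}ech cocycle representing $\alpha$; pushing down to $X$, where the relative canonical bundle contributes a determinant, turns this cocycle into $\dlog(\alpha_{ijk})$, giving $\delta_1=\dlog\alpha$. Equivalently one can express $\delta_1$ through the Atiyah class of $\omega_{P/X}$, or observe that $P$ is the twist of $\PP^1\times X$ by the $\PGL_2$-torsor whose associated $\Gm$-gerbe is $\alpha$---the appearance of $\dlog$ reflecting that in characteristic $2$ the Lie algebra of $\PGL_2$ is a nonsplit extension involving the scalar matrices. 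This local identification is the main obstacle; the reduction to it, and the bookkeeping against Figure~\ref{fig:hodge}, are routine.
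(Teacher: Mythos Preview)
Your reduction matches the paper's exactly: both argue by a dimension count, compute $\HH(P/k)$ from the semiorthogonal decomposition, and compute the Hodge cohomology of $P$ through the relative cotangent sequence of $\pi$; both reduce the theorem to showing that the connecting map $H^1(P,\omega_{P/X})\cong k\to H^2(X,\Omega^1_X)$ is nonzero (equivalently, that $\pi^*\colon H^2(X,\Omega^1_X)\to H^2(P,\Omega^1_P)$ vanishes). The difference is only at this last step.

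You try to identify $\delta_1$ with $\dlog\alpha$ by an \'etale-local cocycle computation. That is plausible, but your sketch is not a proof: the ``line bundles $L_{ij}$'' are really $\GL_2$-lifts of the $\PGL_2$-transition functions, and the passage from the failure of splittings to glue to the \v{C}ech class $\dlog(\alpha_{ijk})$ needs a careful derivation you have not supplied. The paper bypasses this entirely with a one-line naturality argument: since $P$ is the Severi--Brauer scheme of $\Ascr$, the pullback $\pi^*\colon H^2(X,\Gm)\to H^2(P,\Gm)$ kills $\alpha$; by naturality of $\dlog$, $\pi^*(\dlog\alpha)=\dlog(\pi^*\alpha)=0$ in $H^2(P,\Omega^1_P)$. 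As $\dlog\alpha\neq 0$ generates the one-dimensional $H^2(X,\Omega^1_X)$, the map $\pi^*$ on $H^2(-,\Omega^1)$ is zero, which is exactly $\delta_1\neq 0$. This gives the conclusion without ever naming $\delta_1$ explicitly, and avoids the local computation that you flag as ``the main obstacle.''
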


\begin{proof}
    We know by the calculation above the dimension of each $k$-vector space
    $H^i(\HH(P/k))$. It is enough to check that
    $$\sum_{s+t=i}\dim_kH^s(P,\Omega_P^{-t})=\dim H^i(\HH(P/k)).$$
    We can do this calculation up to a small discrepancy for any $\PP^1$-bundle
    over $X$. Resolving this discrepancy involves using the fact that the
    pullback map $\Br(X)\rightarrow\Br(P)$ kills $\alpha$.

    We will fill out the table in Figure~\ref{fig:hodge-conic} for this
    threefold. By Serre duality, saying that
    $H^s(P,\Omega_P^t)\iso H^{3-s}(P,\Omega_P^{3-t})^\ast$, it is enough to fill
    out the first two columns. In the figure, $\epsilon$ refers to a fixed
    number, either $0$ or $1$; it is the same number each place it appears.

    \begin{figure}[h]
        \centering
        \begin{tabular}{c c c c c}
            &   $\Omega_P^0$   &   $\Omega^1_P$   &   $\Omega_P^2$ & $\Omega_P^3$\\[0.5ex]
            $H^3$  &   $0$ &  $0$   &  $k$    & $k$\\[0.5ex]
            $H^2$  &   $0$ & $k^{0+\epsilon}$ & $k^{12+\epsilon}$&0\\[0.5ex]
            $H^1$  &   $0$ &   $k^{12+\epsilon}$    & $k^{0+\epsilon}$&0\\[0.5ex]
            $H^0$  &   $k$ &   $k$ &   $0$  & $0$
        \end{tabular}
        \caption{The Hodge cohomology of the conic bundle over a classical Enriques surface.}
        \label{fig:hodge-conic}
    \end{figure}

    Since $\pi\colon P\rightarrow X$ is a $\PP^1$-bundle, we find that
    $R \pi_*\Oscr_P\we\Oscr_X$. Hence,
    $H^s(P,\pi^*\Omega^t_X)\iso H^s(X,\Omega^t_X)$ by adjunction for all $s,t$.
    Moreover, we see immediately that $H^s(P,\Oscr_P)=0$ for $s>0$. This
    computes the first column. For the second, we use the conormal sequence.
    Let's first compute the cohomology of $\omega_{P/X}$. By
    Grothendieck-Verdier duality for $P\rightarrow X$, we see that $R
    \pi_*\omega_{P/X}\we\Oscr_X[-1]$. Therefore, using the spectral sequence
    $$E_2^{a,b}=H^a(X,R^b
    \pi_*\omega_{P/X})\Rightarrow H^{a+b}(P,\omega_{P/X}),$$ we see that
    $H^s(P,\omega_{P/X})\iso H^{s-1}(X,\Oscr_X)$. In other words, using
    Figure~\ref{fig:hodge}, we have that $H^s(P,\omega_{P/X})$ is
    $1$-dimensional for $s=1$ and zero otherwise. The long exact sequence for
    the cohomology of $0\rightarrow
    \pi^*\Omega_X^1\rightarrow\Omega^1_P\rightarrow\omega_{P/X}\rightarrow 0$ gives
    us an exact sequence
    $$0\rightarrow H^1(X,\Omega^1_X)\rightarrow H^1(P,\Omega^1_P)\rightarrow
    k\rightarrow H^2(X,\Omega^1_X)\rightarrow H^2(P,\Omega^1_P)\rightarrow 0$$
    as well as isomorphisms
    \begin{align*}
        H^0(X,\Omega^1_X)&\iso H^0(P,\Omega^1_P)\\
        H^3(X,\Omega^1_X)&\iso H^3(P,\Omega^1_P).
    \end{align*}
    Hence, $H^0(P,\Omega^1_P)\iso k$ and $H^3(P,\Omega^1_P)=0$.

    Using the exact sequence, we see that $\epsilon=1$ if
    $H^1(P,\Omega_P^1)\rightarrow k$ is surjective (bearing in mind that
    $H^2(X,\Omega^1_X)$ is $1$-dimensional). We have $\epsilon=0$ if
    $H^2(X,\Omega^1_X)\rightarrow H^2(P,\Omega^1_P)$ is zero.

    This completes the analysis of the table. To determine $\epsilon$, we use
    the commutative diagram
    $$\xymatrix{
        H^2(X,\Gm)\ar[r]^{\dlog}\ar[d] &   H^2(X,\Omega_X^1)\ar[d]\\
        H^2(P,\Gm)\ar[r]^{\dlog}& H^2(P,\Omega_P^1).
    }$$ Since the pullback map $H^2(X,\Gm)\rightarrow H^2(P,\Gm)$ kills
    $\alpha$, by definition of the Severi--Brauer scheme, and since
    $\dlog\alpha$ is nonzero in $H^2(X,\Omega_X^1)$, we see that
    $H^2(X,\Omega^1_X)\rightarrow H^2(P,\Omega^1_P)$ has a non-trivial kernel
    and hence is identically zero since the map is $k$-linear and
    $H^2(X,\Omega^1_X)$ is $1$-dimensional over $k$.
    Thus, we get $\epsilon = 0$ and a dimension count now completes the proof.
\end{proof}

    We see from this a phenomenon which only exists in characteristic $p>0$: 
	 Hodge cohomology can distinguish between $\PP^n$-bundles.
To begin with, we observe that this never happens in characteristic zero.

\begin{proposition}[{Hodge cohomology of $\PP^n$-bundles} ]
\label{HodgecohPnbundle}
Suppose $k$ is a field of characteristic zero and $X$ is a smooth $k$-scheme.
Let $\pi \colon P \to X$ be a Severi-Brauer scheme (i.e., an \'etale locally trivial $\PP^n$-bundle). The Hodge cohomology $H^* ( P, \Omega_P^{*})$ is a free bigraded module
over $H^*(X, \Omega_X^{\ast})$ on the set $\{1,c,...,c^n\}$, where $c = c_1(\omega_{P/X}) \in H^1(P,\Omega^1_P)$.
\end{proposition}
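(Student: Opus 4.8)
The plan is to prove that for every $i\ge 0$ the natural comparison map
\[ \Phi^i\colon \bigoplus_{j=0}^{n} R\Gamma\big(X,\Omega_X^{i-j}\big)[-j]\longrightarrow R\Gamma\big(P,\Omega_P^{i}\big), \]
given on the $j$-th summand by pullback along $\pi$ followed by cup product with $c^j$, is an equivalence in $D(k)$; this is exactly the assertion of the proposition. First I would record that since $X/k$ is smooth and $\pi$ is smooth and proper, $P/k$ is smooth and every cotangent complex occurring is a vector bundle, so the $\Omega^i$ are the ordinary sheaves of differential forms, the projection formula applies with no derived subtleties, and $\Phi^i$ is well-defined. (Sheaf-theoretically on $X$, one can build $\Phi^i$ from the cotangent exact sequence $0\to\pi^*\Omega^1_X\to\Omega^1_P\to\Omega^1_{P/X}\to 0$ and the projection formula, but this will not be needed.)

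The key step is to reduce to a trivial bundle. Since Hodge cohomology satisfies \'etale descent (\Cref{BMS2descent}), I would pick an \'etale cover $U_0\to X$ trivializing the $\mathrm{PGL}_{n+1}$-torsor underlying $P$, and let $U_\bullet$ be its \v{C}ech nerve, so that $P\times_X U_m\cong\PP^n_{U_m}$ for every $m$. Both sides of $\Phi^i$ are then totalizations of the cosimplicial objects attached to $U_\bullet$ and to $\PP^n_{U_\bullet}$; because the class $c=c_1(\omega_{P/X})$ is defined globally on $P$, its restrictions to the levels $\PP^n_{U_m}$ are automatically compatible with the cosimplicial structure maps, so $\Phi^i$ is the totalization of the analogous comparison maps for $\PP^n_{U_m}\to U_m$. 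It therefore suffices to treat the case $P=\PP^n_U\to U$ of a trivial bundle.

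In that case the ordinary projective bundle formula (\Cref{prop:pbf}, applied to the trivial rank $n+1$ bundle on $U$) gives that $H^*(\PP^n_U,\Omega^{\ast})$ is free as a bigraded module over $H^*(U,\Omega^{\ast}_U)$ on $1,h,\dots,h^n$, where $h=c_1(\mathcal{O}_{\PP^n_U}(1))\in H^1(\PP^n_U,\Omega^1)$. Since $\omega_{\PP^n_U/U}\cong\mathcal{O}_{\PP^n_U}(-n-1)$, one has $c=c_1(\omega_{P/U})=-(n+1)\,h$; and because $\mathrm{char}(k)=0$ the integer $n+1$ is a unit in $k$, so $c^j=(-(n+1))^{j}h^j$ differs from $h^j$ by a unit scalar for every $j$. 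Hence $1,c,\dots,c^n$ is again a free bigraded basis, i.e., $\Phi^i$ is an equivalence, which completes the argument.

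The one point I expect to require care is the descent reduction: making precise that ``$\Phi^i$ is an equivalence'' may be checked after passing to the \v{C}ech nerve of an \'etale cover, and that the globally defined class $c$ genuinely organizes the level-wise comparison maps into a cosimplicial map whose totalization is $\Phi^i$. Everything else is standard; the role of $\mathrm{char}(k)=0$ is precisely to render $n+1$ invertible, which is exactly the hypothesis that fails in the conic-bundle example of \S\ref{sec:conic}, where $p=2$ divides $n+1=2$.
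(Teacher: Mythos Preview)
Your proof is correct and follows essentially the same route as the paper: reduce \'etale locally to the trivial bundle, use the projective bundle formula there to obtain the basis $1,h,\dots,h^n$ with $h=c_1(\mathcal{O}(1))$, and then observe that $c=-(n+1)h$ so that invertibility of $n+1$ in characteristic zero lets you swap to the basis $1,c,\dots,c^n$. The paper is simply terser about the descent step (it says ``we may work \'etale locally'' and invokes the K\"unneth formula directly rather than \Cref{prop:pbf}), whereas you spell out the \v{C}ech-nerve totalization; your observation that $c$ is globally defined and hence compatible with the cosimplicial structure is exactly the point the paper encodes in ``the formation of $\omega_{P/X}$ and $c$ commutes with base change on $X$''.
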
 

This proposition may be regarded as an instance of the Leray--Hirsch theorem for Hodge cohomology. As the proof below shows, the result also holds true in characteristic $p$ provided $p \nmid (n+1)$.

\begin{proof} 
As in the proof of \Cref{prop:pbf}, because the formation of $\omega_{P/X}$ and $c$ commutes with base change on $X$, we may work \'etale locally on $X$ to reduce to the case $P = \mathbf{P}^n \times X$. By the K\"{u}nneth formula, $H^*(P, \Omega^*_P)$ is a free bigraded module over $H^*(X, \Omega^*_X)$ on the set $\{1,d,...,d^{n}\}$, where $d = c_1(\mathrm{pr}_1^* \mathcal{O}_{\mathbf{P}^n}(1))$. Since $\omega_{P/X} \simeq \mathrm{pr}_1^* \mathcal{O}_{\mathbf{P}^n}(-n-1)$, we have $c = -(n+1)d$. As we are working in characteristic $0$, it is then clear that $H^*(P, \Omega^*_P)$ is then also a free bigraded module over $H^*(X, \Omega^*_X)$ on the set $\{1,c,...,c^{n}\}$.
\end{proof} 

 For $\PP^n$-bundles that arise as
	 projectivizations of vector bundles, \Cref{HodgecohPnbundle} holds in arbitrary
	 characteristic
	 as in \Cref{prop:pbf}. 
By contrast, in 	characteristic $p> 0$, we see that
    Hodge cohomology can distinguish between $\PP^n$-bundles $P$ and $\PP^n_X$ in some cases. 
	If $X$ is a smooth proper variety over any field $k$  and $\pi \colon P \to
	X$ is a $\PP^n$-bundle, then a slight elaboration of the above proof (to
	handle $R \pi_* \Omega^{i}_{P}$) shows that 
	the Hodge cohomology of $P$ is  bounded above by that of $\PP^n_X$. However,
	this bound may be strict. 
For example, 
	 in characteristic two, we have seen above that the pullback in Hodge cohomology 
	 the conic bundle
for $P \to X$ over a classical Enriques surface $X$ is not injective;
specifically, $H^2(X, \Omega^1_X) \to H^2( P, \Omega^1_P)$ was not injective
thanks to the $\mathrm{dlog}$ map. 
In characteristic $p > 2$, 
the results of Section~\ref{sec:PGLn} show that there is a 
smooth projective threefold $Y$ and a $\mathbb{P}^{p-1}$-bundle $\pi': P' \to Y$ such
that the pullback $H^2(Y, \Omega^1_Y) \to H^2( P', \Omega^1_{P'})$ is not
injective.

\bibliographystyle{amsplain}
\bibliography{HKR3}

\end{document}